\numberwithin{equation}{section}
\newtheorem{theorem}{Theorem}[section]
\newtheorem{corollary}[theorem]{Corollary}
\theoremstyle{definition}
\newtheorem{example}[theorem]{Example}
\theoremstyle{remark}
\newtheorem{remark}[theorem]{Remark}
\newcommand{\C}{\mathbb{C}}
\newcommand{\Sp}{\mathbb{S}}
\newcommand{\R}{\mathbb{R}}
\newcommand{\Z}{\mathbb{Z}}
\newcommand{\N}{\mathbb{N}}
\newcommand{\U}{\operatorname{U}}
\newcommand{\SU}{\operatorname{SU}}
\newcommand{\SL}{\operatorname{SL}}
\newcommand{\SO}{\operatorname{SO}}
\newcommand{\OO}{\operatorname{O}}
\newcommand{\co}{\colon\thinspace}
\newcommand{\bs}{\boldsymbol}
\newcommand{\Hilb}{\operatorname{Hilb}}
\newcommand{\on}{\text{\rm on}}
\newcommand{\off}{\text{\rm off}}
\newcommand{\HOM}{\operatorname{Hom}}
\newcommand{\Alt}{\bs{\operatorname{Alt}}}
\begin{document}

\title[Multigraded Hilbert series of rank $1$ Lie groups]
{Multigraded Hilbert series of invariants, covariants, and symplectic quotients for some rank 1 Lie groups}

\author[A.~Barringer]{Austin Barringer}
\address{
1090 Vermont Ave NW Suite 900   \\
Washington, DC, 20005, USA}
\email{austin.barringer@yahoo.com}

\author[H.-C.~Herbig]{Hans-Christian Herbig}
\address{
Departamento de Matem\'{a}tica Aplicada,
Universidade Federal do Rio de Janeiro,
Av. Athos da Silveira Ramos 149,
Centro de Tecnologia - Bloco C,
CEP: 21941-909 - Rio de Janeiro, Brazil}
\email{herbighc@gmail.com}

\author[D.~Herden]{Daniel Herden}
\address{
Department of Mathematics,
Baylor University,
Sid Richardson Building,
1410 S.4th Street,
Waco, TX 76706, USA}
\email{daniel\_herden@baylor.edu}

\author[S.~Khalid]{Saad Khalid}
\address{The Ohio State University,
Department of Physics,
191 W Woodruff Ave,
Columbus, OH 43210, USA}
\email{khalid.42@buckeyemail.osu.edu}

\author[C.~Seaton]{Christopher Seaton}
\address{
Department of Mathematics and Computer Science,
Rhodes College,
2000 N. Parkway,
Memphis, TN 38112, USA}
\email{seatonc@rhodes.edu}

\author[L.~Walker]{Lawton Walker}
\address{
Georgia Institute of Technology,
North Ave NW,
Atlanta, GA 30332
}
\email{jlwalker708@gmail.com}

\thanks{
A.B., S.K., and L.W. were supported by Rhodes College Summer Research Fellowships.
H.-C.H. was supported by CNPq through the \emph{Plataforma Integrada Carlos Chagas}.
C.S. was supported by the E.C.~Ellett Professorship in Mathematics.
}

\keywords{Hilbert series, multigrading, multigraded Hilbert series, circle invariants, circle representation, $\OO_2$-invariants, $\OO_2$-representation, symplectic reduction}
\subjclass[2020]{Primary 13A50; Secondary 05A15, 14L30, 53D20}

\begin{abstract}
We compute univariate and multigraded Hilbert series of invariants and covariants of representations of the circle and orthogonal group $\OO_2$. The multigradings considered include the maximal grading associated to the decomposition of the representation into irreducibles as well as the bigrading associated to a cotangent-lifted representation, or equivalently, the bigrading associated to the holomorphic and antiholomorphic parts of the real invariants and covariants. This bigrading induces a bigrading on the algebra of on-shell invariants of the symplectic quotient, and the corresponding Hilbert series are computed as well. We also compute the first few Laurent coefficients of the univariate Hilbert series, give sample calculations of the multigraded Laurent coefficients, and give an example to illustrate the extension of these techniques to the semidirect product of the circle by other finite groups. We describe an algorithm to compute each of the associated Hilbert series.
\end{abstract}

\maketitle
\tableofcontents


\section{Introduction}
\label{sec:Intro}

Let $R = \bigoplus_{d=0}^\infty R_d$ be an $\N$-graded commutative algebra over a field $\Bbbk$ where $\N = \{0,1,2,\ldots\}$. The Hilbert series of $R$
is the generating function of the dimensions of the $R_d$,
\[
    \Hilb_R(t)  =   \sum\limits_{d=0}^\infty t^d \dim_{\Bbbk} R_d.
\]
If $R$ is finitely generated, then $\Hilb_R(t)$ is the power series of a rational function with radius of
convergence at least $1$; see \cite[Section~1.4]{DerksenKemperBook}.
More generally, if $R = \bigoplus_{\bs{d}\in\N^n} R_{\bs{d}}$ is $\N^n$-graded and $M = \bigoplus_{\bs{d}\in\N^n} M_{\bs{d}}$
is an $\N^n$-graded $R$-module, the \emph{multigraded Hilbert series of $M$ associated to the $\N^n$-grading} is given by
\[
    \Hilb_M(\bs{t})  =   \sum\limits_{\bs{d}\in\N^n} \bs{t}^{\bs{d}} \dim_{\Bbbk} M_{\bs{d}},
\]
where $\bs{t} = (t_1,\ldots,t_n)$ and for $\bs{d} = (d_1,\ldots,d_n)$, $\bs{t}^{\bs{d}} = t_1^{d_1}\cdots t_n^{d_n}$;
see \cite[Chapter~1, Section~2]{StanleyCombComAlg}. Once again, if $R$ is finitely generated and $M$ is finitely generated over $R$,
then $\Hilb_M(\bs{t})$ is the power series of a rational function in the $t_i$; see \cite[Chapter~1, Theorem~2.3]{StanleyCombComAlg}.

If $R = \Bbbk[V]^G$ is the algebra of
invariant polynomials of a finite-dimensional representation $V$ of a reductive group $G$ with the usual $\N$-grading, then
$\Hilb_R(t)$ can be computed using Molien's formula when $G$ is finite or the Molien-Weyl formula in general; see
\cite[Theorem~3.4.2, Section~4.6.1]{DerksenKemperBook}, \cite[Theorem~2.2.1]{SturmfelsBook}, or Theorem~\ref{thrm:MolienWeyl} below.
Suppose the $G$-module $V$ decomposes as $V = V_1\oplus\cdots\oplus V_n$ where we do not necessarily
assume that the $V_i$ are irreducible. Consider the $\N^n$-grading of $\Bbbk[V]$ and $\Bbbk[V]^G$ given by assigning
to a monomial $f_1(\bs{v}_1) f_2(\bs{v}_2)\cdots f_n(\bs{v}_n)$ with $f_i(\bs{v}_i)\in\Bbbk[V_i]$ the degree
$(\deg f_1, \ldots, \deg f_n) \in \N^n$. We refer to this grading as the \emph{grading associated to the decomposition
$V_1\oplus\cdots\oplus V_n$ of $V$}. Similarly, the multigraded Hilbert series $\Hilb_{\Bbbk[V]^G}(t_1,\ldots,t_n)$
is the \emph{multigraded Hilbert series associated to the decomposition}.
More generally, if $W$ is another finite-dimensional representation of $G$, then the $\Bbbk[V]^G$-module of covariants
$\HOM(V,W)^G = (\Bbbk[V]\otimes W)^G$ is similarly graded by assigning to the element $f(\bs{v})\otimes w$, where $f\in\Bbbk[V]$
is homogeneous with respect to the grading induced by the decomposition of $V$ and $w\in W$, the degree of $f(\bs{v})$.
As $G$ is reductive so that $\Bbbk[V]^G$ is a finitely-generated algebra and
$\HOM(V,W)^G$ is a finitely-generated $\Bbbk[V]^G$-module by \cite[Theorem~4.2.10]{DerksenKemperBook}, \cite[Theorem~3.24]{PopovVinberg},
the multigraded Hilbert series of invariants or covariants is again the power series of a rational function.
If the decomposition of $V$ is into irreducibles, we refer to the
resulting grading as the \emph{maximal grading} of $V$. When $G = \Sp^1$, then because a polynomial is
$\Sp^1$-invariant if and only if each of its monomial terms is invariant, the maximally graded Hilbert series
determines the algebra of invariants or covariants; see \cite[Chapter~1, Section~3]{StanleyCombComAlg} and
Example~\ref{ex:S1Max1} below.

An important case of the above multigrading is the \emph{cotangent lift} $V = V_1 \oplus V_1^\ast$ of the representation $V_1$;
we refer to the corresponding grading as the \emph{cotangent bigrading}. If $\Bbbk = \C$, then this grading corresponds to
the decomposition of the underlying real representation into holomorphic and antiholomorphic components.
Specifically, we can express elements of $V$ with coordinates
$(z_1,\ldots,z_n,w_1,\ldots,w_n)$ where the $z_i$ are coordinates for $V_1$, the $w_i$ are dual coordinates for $V_1^\ast$,
the $\N^2$-degree of each $z_i$ is $(1,0)$, and the degree of $w_i$ is $(0,1)$.
The real polynomial invariants of the real representation underlying $V_1$ correspond to the invariants of $V$ with real coefficients
where $w_i = \overline{z_i}$; see \cite[Proposition~(5.8)(I)]{GWSliftingHomotopies}. The cotangent bigraded Hilbert series
was defined and studied in \cite{Forger}.

One may also consider the case of the \emph{on-shell invariants} of the \emph{linear symplectic quotient} associated
to a unitary representation $V$ of a compact Lie group $G$; see \cite[Section~2]{HerbigSeaton}.
Letting $J\co V\to\mathfrak{g}^\ast\simeq\R^\ell$ denote the \emph{moment map} associated to the $G$-representation, the on-shell invariants are
defined to be $\R[V]^G/I_J^G$ where $I_J$ is the ideal of $\R[V]$ generated by the components of $J$ and $I_J^G = I_J\cap\R[V]^G$.
Because the components of the moment map have degree $(1,1)$ with respect to the cotangent bigrading and are not homogeneous with
respect to further decompositions of $V$, the only reasonable gradings for the on-shell invariants are those induced by the standard
$\N$-grading or the cotangent bigrading on $V$. In the case that $I_J$ is a \emph{real} ideal, the on-shell invariants
coincide with the real regular functions on the corresponding symplectic quotient; see
\cite{ArmsGotayJennings}, \cite[Section~2]{HerbigIyengarPflaum}, \cite[Section~2.1]{FarHerSea}, \cite[Section~4]{HerbigSchwarz}, or
\cite[Section~2.2]{HerbigSchwarzSeaton2}. Note that we will sometimes refer to elements of $\Bbbk[V]^G$ as \emph{off-shell invariants}
to avoid potential confusion with the on-shell invariants.

This paper continues a program of computing Hilbert series which we began in \cite{HerbigSeaton,CowieHerbigSeatonHerden} for univariate Hilbert
series of on- and off-shell invariants of circle representations, \cite{CayresPintoHerbigHerdenSeaton,HerbigHerdenSeatonSU2} for
on- and off-shell invariants associated to $\SL_2$- and $\SU_2$-modules where multigraded Hilbert series played a minor role,
and \cite{HerbigHerdenSeatonT2} for the univariate Hilbert series of on-shell invariants of $2$-torus modules. See also
\cite{Hilbert,Brion,LittelmannProcesi,BroerSysBinForm,BroerCM,BroerCovariants,BroerNewMethod}, the more recent work
\cite{BedratyukPoincareCovariants,BedratyukWeitzenbock1,BedratyukIlashCovariants,IlashPoincareNLinForm,IlashPoincareNQuadForm,BedratyukXin},
and in particular the recent work involving multigraded Hilbert series
\cite{BedratyukBivarPoincare,BedratyukSL2MultigradPoincare,BedratyukBedratyukMultivarPoincare}.
See also \cite{StanleyHilbFunGradAlg,PopovBook,FarHerSea,HerbigSeaton2,HerbigHerdenSeaton,CapeHerbigSeaton,HerbigSchwarzSeaton2,HerbigLawlerSeaton,HerbigHerdenSeatonLaurent}
for applications of Hilbert series and Laurent coefficient computations to identifying properties of and distinguishing between
the corresponding singularities,
\cite{Hanany,HananyMekareeya,BaoHananyOpenProblems}
for applications in gauge theory, and
\cite{CollinsSzekelyhidi,CollinsSzekelyhidi2} for applications to K-stability of Sasakian manifolds.

The present paper focuses on the multigraded Hilbert series, which in particular avoids the complications introduced by degeneracies; see
\cite{HerbigSeaton,CowieHerbigSeatonHerden}. We consider the case of the circle $\Sp^1$ in Section~\ref{sec:S1}
as well as the non-connected orthogonal group $\OO_2$ in Section~\ref{sec:O2}. In each case,
we compute the maximally graded Hilbert series of the off-shell invariants and covariants and the bigraded Hilbert series
of the on-shell invariants. We also compute the maximally graded Hilbert series of invariants of the group $\Sp^1\rtimes\Z/4\Z$ in
Section~\ref{sec:O2OtherSemidirect} to indicate the extension of these techniques to other non-connected groups.
Specializing to the univariate Hilbert series, we compute the first two nontrivial coefficients of the Laurent expansion of the Hilbert series
of circle covariants in Section~\ref{subsec:S1Laurent}, generalizing the results of \cite{HerbigSeaton,CowieHerbigSeatonHerden}, as well as the
Laurent coefficients of covariants and on-shell invariants for $\OO_2$ in Section~\ref{subsec:O2Laurent}.
Though the Laurent coefficients in the multigraded case are ambiguous and less motivated, we give some sample computations of
bigraded Laurent coefficients in the case of the on-shell invariants of a circle representation in Section~\ref{subsec:S1LaurentMultigrad}.
In Section~\ref{sec:Algorithms}, we describe an algorithm to compute the Hilbert series in one case which can easily be extended to the other
cases computed in this paper.


\section*{Acknowledgements}

H.-C.H. and C.S. would like to thank Baylor University, D.H. and C.S. would like to thank
the Instituto de Matem\'{a}tica Pura e Aplicada (IMPA), and H.-C.H. and D.H. would like to thank Rhodes College
for hospitality during work contained here.
This paper developed from A.B., S.K., and L.W.'s senior seminar projects in
the Rhodes College Department of Mathematics and Computer Science, and the
authors gratefully acknowledge the support of the department and college for these
activities. A.B., S.K., and L.W. express appreciation to the Rhodes College Summer Research Fellowship program,
H.-C.H. to CNPq through the \emph{Plataforma Integrada Carlos Chagas}, and
C.S. to the E.C.~Ellett Professorship in Mathematics and the Rhodes College sabbatical program,
for financial support.


\section{Background}
\label{sec:Back}

In this section, we collect some background information which we will need throughout this paper. We refer the reader to
\cite{DerksenKemperBook,PopovVinberg}, \cite[Section~2]{HerbigSeaton}, and the explanations in the Introduction for more details.

We begin with the following, which will be our primary tool for computing Hilbert series.
For univariate Hilbert series, see \cite[Theorem~3.4.2]{DerksenKemperBook} or \cite[Theorem~2.2.1]{SturmfelsBook}
for the case of invariants of a finite group,
\cite[Remark~3.4.1]{DerksenKemperBook} for covariants of a finite group, and
\cite[Section~4.6.1]{DerksenKemperBook} for the case of a general reductive group. See
\cite[Equation~(13)]{StanleyInvarFinGrp} and \cite[Section~IV]{Forger} for extensions to the multigraded case;
the general result stated below is proven similarly.

\begin{theorem}[Molien-Weyl Theorem]
\label{thrm:MolienWeyl}
Let $G$ be a reductive group over $\C$ and $K$ a maximally compact subgroup. Let $V = V_1\oplus V_2\oplus\cdots\oplus V_n$ and $W$ be
finite-dimensional rational representations of $G$ over $\C$. Then the multigraded Hilbert series of the module of covariants $\HOM(V,W)^G$
associated to the decomposition $V = V_1\oplus V_2\oplus\cdots\oplus V_n$ is given by
\[
    \Hilb_{\HOM(V,W)^G}(\bs{t})
    =   \int\limits_{z\in K} \frac{\chi_W(z^{-1}) \, d\mu}{\prod\limits_{i=1}^n \det_{V_i} (1 - t_i z_i)},
\]
where $\bs{t} = (t_1,\ldots,t_n)$, $\mu$ is a normalized Haar measure on $K$, $\chi_W$ is the character of the representation $W$,
$\det_{V_i}$ denotes the determinant on $\operatorname{End}(V_i)$, and $z_i$ denotes the restriction of the
action of $z\in G$ to $V_i$.
\end{theorem}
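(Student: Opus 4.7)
The plan is to follow the standard proof strategy for the Molien--Weyl formula but in the multigraded setting, carefully tracking the action of $G$ on each summand $V_i$ and on $W$.

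First I would reduce the problem from $G$ to $K$. Since $K$ is Zariski dense in the reductive group $G$, for any rational finite-dimensional $G$-module $U$ we have $U^G = U^K$; this applies in particular to each multigraded piece of $\C[V]\otimes W$, so taking $G$-invariants is the same as taking $K$-invariants in every multidegree. This allows us to pass to the compact group $K$, where integration against normalized Haar measure is well behaved and characters determine representations.

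Next I would decompose the $\N^n$-graded module. Writing $\C[V]=\C[V_1]\otimes\cdots\otimes\C[V_n]$ and $\C[V_i]=\bigoplus_{d_i\ge 0} S^{d_i}V_i^\ast$, the piece of $\C[V]\otimes W$ of multidegree $\bs{d}=(d_1,\dots,d_n)$ is
\[
    U_{\bs{d}} \;=\; S^{d_1}V_1^\ast \otimes \cdots \otimes S^{d_n}V_n^\ast \otimes W,
\]
a finite-dimensional rational $G$-module. Its character at $z\in K$ is the product $\chi_{S^{d_1}V_1^\ast}(z)\cdots\chi_{S^{d_n}V_n^\ast}(z)\,\chi_W(z)$. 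By the standard character formula for compact groups,
\[
    \dim U_{\bs{d}}^{K} \;=\; \int_K \chi_{U_{\bs{d}}}(z)\,d\mu.
\]
Then I would assemble the multigraded Hilbert series as the double sum $\sum_{\bs{d}}\bs{t}^{\bs{d}} \dim U_{\bs{d}}^K$ and interchange the sum with the integral, which is legitimate for $\bs{t}$ with $|t_i|$ sufficiently small: each generating series $\sum_{d_i}t_i^{d_i}\chi_{S^{d_i}V_i^\ast}(z)$ converges absolutely and uniformly on $K$, and $\chi_W$ is continuous on the compact group $K$, so dominated convergence applies.

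Finally I would substitute the well-known identity for the character generating function of symmetric powers of the dual representation. If $\lambda_1,\dots,\lambda_{k_i}$ are the eigenvalues of $z_i$ on $V_i$, then
\[
    \sum_{d_i=0}^{\infty} t_i^{d_i}\chi_{S^{d_i}V_i^\ast}(z) \;=\; \prod_{j=1}^{k_i}\frac{1}{1-t_i\lambda_j^{-1}} \;=\; \frac{1}{\det_{V_i}(1-t_i z_i^{-1})}.
\]
Combining these gives the series as $\int_K \chi_W(z)\prod_i \det_{V_i}(1-t_iz_i^{-1})^{-1}\,d\mu$. Applying the change of variables $z\mapsto z^{-1}$, which preserves the normalized Haar measure on $K$, yields the form in the statement with $\chi_W(z^{-1})$ in the numerator and $\det_{V_i}(1-t_iz_i)$ in the denominator.

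The main obstacles are largely technical rather than conceptual: the reduction $U^G=U^K$ must be applied compatibly with the multigrading (which is harmless since the grading is $G$-equivariant), and the interchange of the $\N^n$-indexed sum with the Haar integral must be justified on a polydisc around $\bs{t}=\bs{0}$ before being extended by analytic continuation to the rational function promised by the finite-generation argument cited in the introduction. Once these points are settled, the identification of the geometric series in each variable with the reciprocal of $\det_{V_i}(1-t_iz_i)$ is immediate.
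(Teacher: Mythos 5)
Your proposal is correct and is exactly the standard argument: reduction to $K$ via Zariski density, the orthogonality formula $\dim U^K=\int_K\chi_U\,d\mu$ applied to each multigraded piece $S^{d_1}V_1^\ast\otimes\cdots\otimes S^{d_n}V_n^\ast\otimes W$, the geometric series identity $\sum_d t^d\chi_{S^dV^\ast}(z)=\det_V(1-tz^{-1})^{-1}$, and the inversion-invariance of Haar measure. The paper gives no proof of its own, deferring to \cite[Theorem~3.4.2, Section~4.6.1]{DerksenKemperBook}, \cite{StanleyInvarFinGrp}, and \cite{Forger} with the remark that the general multigraded statement ``is proven similarly,'' and your argument is precisely that proof.
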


Of course, the invariants $\C[V]^G$ correspond to the case when $W$ is the trivial $1$-dimensional representation.
Because the module of covariants is additive, i.e., $\HOM(V,W_1 \oplus W_2)^G = \HOM(V,W_1)^G \oplus \HOM(V,W_2)^G$, we will often
assume that $W$ is irreducible with no loss of generality.

Theorem~\ref{thrm:MolienWeyl} can be applied to the covariants of a compact Lie group $K$ by
letting $G$ denote the complexification of $K$, and to real invariants through the isomorphism $\R[V]^K\otimes\C\simeq \C[V]^K = \C[V]^G$
given in \cite[Proposition~(5.8)(I)]{GWSliftingHomotopies}. Note that $V$ and $W$ will always admit Hermitian scalar products with respect
to which the representations of $K$ are unitary. Because of these observations, we will state our results for invariants and
covariants of unitary representations of compact Lie groups, though the results extend directly to invariants and covariants of rational
representations of the complexifications of these groups.

The Laurent coefficients at $t = 1$ of the univariate Hilbert series considered here are most easily described using Schur
polynomials and variations thereof, so let us briefly recall these definitions and fix notation. Suppose
$\bs{x} = (x_1,\ldots,x_n)$ is a set of indeterminates. A \emph{partition} $\lambda = (\lambda_1,\ldots,\lambda_n)\in\Z^n$ of length $n$ is a set of integers $\lambda_1\geq\lambda_2\geq\cdots\geq\lambda_n\geq 0$. The \emph{alternant}
$\Alt_\lambda(\bs{x})$ associated to $\lambda$ in $\bs{x}$ is the determinant
$\Alt_\lambda(\bs{x}) = \lvert x_j^{\lambda_i}\rvert$; it is an alternating polynomial in the $x_i$. Letting
$\delta_n = (n-1,n-2,\ldots,1,0)$ and $\bs{V}(\bs{x}) = \Alt_{\delta_n}(\bs{x})$ denote the Vandermonde determinant,
any alternant is divisible by $\bs{V}(\bs{x})$, and the quotient is a symmetric polynomial.
The \emph{Schur polynomial associated to $\lambda$} is defined by
\begin{equation}
\label{eq:DefSchur}
    \bs{s}_\lambda(\bs{x})  =   \frac{\Alt_{\lambda+\delta_n}(\bs{x})}{\bs{V}(\bs{x})}.
\end{equation}
For more details and for a combinatorial description of $\bs{s}_\lambda(\bs{x})$, see
\cite[Section~I.3]{MacdonaldSymFuncs} and \cite[Sections~4.4--6]{SaganSymGrp}.

If $\bs{x} = (x_1,\ldots,x_k)$ and $\bs{y} = (y_1,\ldots,y_m)$ are two sets of indeterminates, $n = k + m$, and
$u\leq n-2$ is an integer, then the \emph{partial Laurent-Schur polynomial} $\bs{sp}_u(\bs{x},\bs{y})$ is defined by
\begin{equation}
\label{eq:DefPartialSchur}
    \bs{sp}_u(\bs{x},\bs{y})
    =   \frac{1}{\bs{V}(\bs{x})\bs{V}(\bs{y})}
        \begin{vmatrix}
            x_1^u       &   \cdots  &   x_k^u       &   0           &   \cdots  &   0
            \\
            x_1^{n-2}   &   \cdots  &   x_k^{n-2}   &   y_1^{n-2}   &   \cdots  &   y_m^{n-2}
            \\
            x_1^{n-3}   &   \cdots  &   x_k^{n-3}   &   y_1^{n-3}   &   \cdots  &   y_m^{n-3}
            \\
            \vdots      &           &   \vdots      &   \vdots      &           &   \vdots
            \\
            x_1         &   \cdots  &   x_k         &   y_1         &   \cdots  &   y_m
            \\
            1           &   \cdots  &   1           &   1           &   \cdots  &   1
        \end{vmatrix}.
\end{equation}
The partial Laurent-Schur polynomial $\bs{sp}_u(\bs{x},\bs{y})$ is a polynomial if $u\geq 0$ and a Laurent polynomial
if $u < 0$; it is symmetric separately in the $\bs{x}$ and $\bs{y}$. See \cite[Section~5]{CowieHerbigSeatonHerden}
for more details, an expression of $\bs{sp}_u(\bs{x},\bs{y})$ in terms of Schur polynomials in $\bs{x}$ and $\bs{y}$,
and a combinatorial description.


\section{Invariants and covariants of $\Sp^1$}
\label{sec:S1}

Let us begin by briefly recalling the irreducible representations of $\Sp^1$; as explained in Section~\ref{sec:Back},
the results of this section apply as well to the complexification $\C^\times$ of $\Sp^1$.
Because $\Sp^1$ is abelian, its irreducible unitary representations are all $1$-dimensional. For each $a\in\Z$,
there is a unique irreducible representation $\epsilon_a\co\Sp^1\to\U_1$ given by $\epsilon_a(z) = z^a$ acting as
multiplication. Hence, a finite-dimensional unitary representation of $\Sp^1$ can be described by a \emph{weight vector}
$\bs{a} = (a_1,\ldots,a_n)$, indicating the representation $V = V_{\bs{a}}= \bigoplus_{i=1}^n\epsilon_{a_i}$.
We will often use the notation $\alpha_i = \lvert a_i\rvert$ to indicate the absolute values of the weights.


\subsection{The maximally graded Hilbert series}
\label{subsec:S1Max}

In this section, we compute the maximally graded Hilbert series of the covariants of unitary $\Sp^1$-modules.

\begin{remark}
\label{rem:S1Faithful}
Assume the representation $V_{\bs{a}}$ is not faithful, i.e., $g:= \gcd(\alpha_1,\ldots,\alpha_n) > 1$. Then a monomial
$f = x_1^{p_1}\cdots x_n^{p_n}$ in $\HOM(V_{\bs{a}}, W_{-b})$ is covariant if and only if $\sum_{i=1}^n p_i a_i = -b$.
If $g$ does not divide $b$, then there are no solutions to this Diophantine equation and hence no covariants so that the
Hilbert series is $0$. If $g$ divides $b$, then clearly $f$ is a covariant if and only if it is a covariant for the representations
$V_{\bs{a}/g}$ with $\bs{a}/g = (a_1/g,\ldots,a_n/g)$ and $W_{-b/g}$, where $V_{\bs{a}/g}$ is now faithful. Hence, we may assume
that $V$ is faithful with no loss of generality. In addition, the existence of a trivial subrepresentation $\epsilon_{a_i}$ with $a_i = 0$ has the trivial effect
of multiplying the Hilbert series by $1/(1 - t_i)$, and we may assume with no loss of generality that $0$ does not appear as a weight.
\end{remark}

\begin{theorem}[Maximally graded Hilbert series of $\Sp^1$-covariants]
\label{thrm:S1Max}
Let $V\simeq \C^n$ be a representation of $\Sp^1$ with weight vector
$\bs{a} = (-\alpha_1,\ldots,-\alpha_k,\alpha_{k+1},\ldots,\alpha_n)$ where $n \geq 1$ and each $\alpha_i > 0$,
and let $W\simeq \C$ be the
irreducible representation of the circle with weight $-b$. Let $\bs{t} = (t_1,\ldots,t_n)$, and let
$\Hilb_{\bs{a};b}^{\Sp^1}(\bs{t})$ denote the Hilbert series for the maximal $\N^n$-grading of the
covariants $\HOM(V,W)^{\Sp^1} = (\C[V]\otimes W)^{\Sp^1}$. Then
\begin{equation}
    \label{eq:S1Max}
    \Hilb_{\bs{a};b}^{\Sp^1}(\bs{t})
    =
    \sum_{\substack{i=1 \\ \zeta^{\alpha_i} = 1}}^k  \frac{\zeta^b t_i^{b/\alpha_i}}{ \alpha_i
        \prod\limits_{\substack{j=1 \\ j\neq i}}^n (1 - \zeta^{a_j} t_j t_i^{- a_j/a_i}) }
        + (-1)^k \sum\limits_{\bs{y}\in\mathcal{S}_{\bs{a},b}}
        \prod\limits_{i=1}^k t_i^{- y_i - 1}
        \prod\limits_{i=k+1}^n t_i^{y_i},
\end{equation}
where the sum $\displaystyle \sum_{\substack{i=1 \\ \zeta^{\alpha_i} = 1}}^k$ is shorthand for $\displaystyle \sum_{i=1}^k \sum_{\zeta^{\alpha_i} = 1}$,
summing over $i = 1,\ldots,k$ and all $\alpha_i$th roots of unity $\zeta$, and
\begin{equation}
\label{eq:S1MaxS}
    \mathcal{S}_{\bs{a},b}  =   \left\{ \bs{y} \in \N^n :
            \sum\limits_{j=1}^n y_j \alpha_j = - b - \sum_{j=1}^k \alpha_j \right\},
\end{equation}
which is empty if $b + \sum_{j=1}^k \alpha_j > 0$.
\end{theorem}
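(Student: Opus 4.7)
The plan is to apply the Molien--Weyl theorem (Theorem~\ref{thrm:MolienWeyl}) and evaluate the resulting contour integral by the residue theorem. Specializing to $K = \Sp^1$ with normalized Haar measure $d\mu = dz/(2\pi i z)$ on $\lvert z\rvert = 1$, $V = \bigoplus_{i=1}^n \epsilon_{a_i}$, and $W = \epsilon_{-b}$ (so that $\chi_W(z^{-1}) = z^b$), I obtain
\[
    \Hilb_{\bs{a};b}^{\Sp^1}(\bs{t})
    =   \frac{1}{2\pi i}\oint_{\lvert z\rvert = 1}
        \frac{z^{b-1}\, dz}{\prod_{i=1}^n (1 - t_i z^{a_i})}.
\]
To prepare for residue evaluation, I would clear the negative powers of $z$ in the $k$ factors coming from negative weights via $1 - t_i z^{-\alpha_i} = z^{-\alpha_i}(z^{\alpha_i} - t_i)$, obtaining
\[
    \Hilb_{\bs{a};b}^{\Sp^1}(\bs{t})
    =   \frac{1}{2\pi i}\oint_{\lvert z\rvert = 1}
        \frac{z^{b-1+\sum_{i=1}^k \alpha_i}\, dz}{\prod_{i=1}^k (z^{\alpha_i} - t_i)\prod_{i=k+1}^n (1 - t_i z^{\alpha_i})}.
\]

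Treating the $t_i$ as formal variables of small modulus, the poles inside the unit disk are the simple roots $z_0 = \zeta t_i^{1/\alpha_i}$ of $z^{\alpha_i} - t_i$ for $i = 1,\ldots,k$ and $\alpha_i$th roots of unity $\zeta$, together with a pole at $z = 0$ whenever $b - 1 + \sum_{i=1}^k \alpha_i < 0$. For the residue at $z_0$, I would use $(d/dz)(z^{\alpha_i} - t_i)\bigr|_{z_0} = \alpha_i z_0^{\alpha_i - 1}$ together with $z_0^{\alpha_j} - t_j = z_0^{\alpha_j}(1 - t_j z_0^{a_j})$ for $j \leq k$ with $j\neq i$ to cancel all surplus powers of $z_0$, collapsing the residue to $z_0^{b}/\bigl(\alpha_i \prod_{j\neq i}^n (1 - t_j z_0^{a_j})\bigr)$. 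Substituting $z_0 = \zeta t_i^{1/\alpha_i}$ and using $(\zeta t_i^{1/\alpha_i})^{a_j} = \zeta^{a_j} t_i^{-a_j/a_i}$ (since $a_i = -\alpha_i$) reproduces the first sum in~\eqref{eq:S1Max}.

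For the residue at $z = 0$, I would expand each factor in a geometric series around the origin: $(z^{\alpha_j} - t_j)^{-1} = -\sum_{m \geq 0} t_j^{-m-1} z^{m\alpha_j}$ for $j \leq k$, and $(1 - t_j z^{\alpha_j})^{-1} = \sum_{m \geq 0} t_j^{m} z^{m\alpha_j}$ for $j > k$. Multiplying out and extracting the coefficient of $z^{-1}$ pins down $\sum_{j=1}^n m_j \alpha_j = -b - \sum_{i=1}^k \alpha_i$, which is precisely the defining condition of $\mathcal{S}_{\bs{a},b}$, while the overall sign $(-1)^k$ arises from the $k$ factors of $-t_j^{-m-1}$, yielding the second summand of~\eqref{eq:S1Max}.

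The main obstacle is the bookkeeping: tracking exponents through the conversions between $a_j$ and $\alpha_j$ and across the geometric expansions. A subtler point is that although individual terms of the first sum involve fractional powers such as $t_i^{b/\alpha_i}$ and $t_i^{-a_j/a_i}$, the inner sum over $\alpha_i$th roots of unity $\zeta$ for fixed $i$ retains only monomials whose total $t_i$-exponent is an integer, since this sum acts as the averaging projection onto the trivial character of $\Z/\alpha_i\Z$; thus the final expression is a genuine Laurent series in the $t_j$ with integer exponents, as required by its interpretation as a Hilbert series.
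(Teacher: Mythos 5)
Your proposal is correct and follows essentially the same route as the paper: Molien--Weyl, clearing the negative powers of $z$ to get the integrand $z^{b-1+\sum_{i\le k}\alpha_i}/\bigl(\prod_{i\le k}(z^{\alpha_i}-t_i)\prod_{i>k}(1-t_iz^{\alpha_i})\bigr)$, geometric-series extraction of the residue at $z=0$ for the second sum, and simple-pole residues at $z=\zeta t_i^{1/\alpha_i}$ for the first (your use of the derivative $\alpha_i z_0^{\alpha_i-1}$ is just a tidier packaging of the paper's explicit factorization). The only detail worth adding is the paper's one-line genericity remark that the poles $\zeta t_i^{1/\alpha_i}$ are simple for generic $t_i$ (e.g.\ distinct moduli), which your ``formal variables of small modulus'' framing only implicitly covers.
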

\begin{proof}
Using the Molien-Weyl formula,
\begin{align*}
    \Hilb_{\bs{a};b}^{\Sp^1}(\bs{t})
    &=      \frac{1}{2\pi\sqrt{-1}} \int_{\Sp^1} \frac{z^b \, dz}{z
                \prod\limits_{j=1}^k (1 - t_jz^{-\alpha_j})
                \prod\limits_{j=k+1}^n (1 - t_jz^{\alpha_j}) }
    \\&=    \frac{1}{2\pi\sqrt{-1}} \int_{\Sp^1} \frac{z^{b - 1 + \sum_{j=1}^k \alpha_j}\,dz}{
                \prod\limits_{j=1}^k (z^{\alpha_j} - t_j)
                \prod\limits_{j=k+1}^n (1 - t_jz^{\alpha_j}) }
\end{align*}
for $\lvert t_i\rvert < 1$.
If $b - 1 + \sum_{j=1}^k \alpha_j \geq 0$, i.e., $- b - \sum_{j=1}^k \alpha_j < 0$,
then the integrand is holomorphic at $z = 0$; note that in this case $\mathcal{S}_{\bs{a},b} = \emptyset$.
If $b - 1 + \sum_{j=1}^k \alpha_j < 0$, then there is a pole
at $z = 0$. In this case, we compute the residue at $z = 0$ as follows. We consider the integrand as the product
of factors of the form $z^{b - 1 + \sum_{j=1}^k \alpha_j}$,
\[
    \frac{1}{z^{\alpha_j} - t_j}
        =   -\frac{1}{t_j} - \frac{z^{\alpha_j}}{t_j^2} - \frac{z^{2\alpha_j}}{t_j^3} - \cdots
        =   \sum\limits_{d=0}^\infty - t_j^{- d - 1} z^{d \alpha_j},
\]
and
\[
    \frac{1}{1 - t_j z^{\alpha_j}}
        =   1 + t_j z^{\alpha_j} + t_j^2 z^{2\alpha_j} + \cdots
        =   \sum\limits_{d=0}^\infty t_j^d z^{d \alpha_j}.
\]
Via the Cauchy product formula, for every collection of nonnegative integers $\bs{y} =(y_1,\ldots,y_n)$ such that
$y_1 \alpha_1 + \cdots + y_n \alpha_n + \big(b - 1 + \sum_{j=1}^k \alpha_j\big) = -1$, i.e.,
$y_1 \alpha_1 + \cdots + y_n \alpha_n = - b - \sum_{j=1}^k \alpha_j$,
there is a corresponding term contributing to the $z^{-1}$ term of the product series given by
\[
    (-1)^k \prod\limits_{i=1}^k t_i^{- y_i - 1}
        \prod\limits_{i=k+1}^n t_i^{y_i}.
\]
Summing over all such $\bs{y}$, the residue at $z = 0$ is given by
\[
    (-1)^k \sum\limits_{\bs{y}\in\mathcal{S}_{\bs{a},b}}
        \prod\limits_{i=1}^k t_i^{- y_i - 1}
        \prod\limits_{i=k+1}^n t_i^{y_i}.
\]

The other poles (and the only poles when $b - 1 + \sum_{j=1}^k \alpha_j \geq 0$) occur when $z^{\alpha_i} = t_i$ for some $i\in \{1,\ldots,k\}$, i.e.,
$z = \zeta t_i^{1/\alpha_i}$ where $\zeta$
is an $\alpha_i$th root of unity and the $t_i^{1/\alpha_i}$ are defined with respect to a fixed branch of the logarithm.
Note that we can choose this branch of $\log$ so as to contain all $t_i$ in its domain. Moreover, each of these poles is simple
for generic choice of $t_i$, e.g., by assuming that the moduli of the $t_i$ are distinct.

Fixing a specific $i \leq k$ and $\alpha_i$th root $\zeta_0$, we express the integrand as
\[
    \frac{z^{b - 1 + \sum_{j=1}^k \alpha_j}}{
        (z^{\alpha_i} - t_i) \prod\limits_{\substack{j=1 \\ j\neq i}}^k (z^{\alpha_j} - t_j)
        \prod\limits_{j=k+1}^n (1 - t_jz^{\alpha_j}) }
    =
    \frac{z^{b - 1 + \sum_{j=1}^k \alpha_j}}{
        (z - \zeta_0 t_i^{1/\alpha_i}) \prod\limits_{\substack{ \zeta^{\alpha_i} = 1\\ \zeta\neq\zeta_0}} (z - \zeta t_i^{1/\alpha_i})
        \prod\limits_{\substack{j=1 \\ j\neq i}}^k (z^{\alpha_j} - t_j)
        \prod\limits_{j=k+1}^n (1 - t_jz^{\alpha_j}) }.
\]
Then the residue at $\zeta_0 t_i^{1/\alpha_i}$ is given by
\begin{align}
    \nonumber
    &\frac{(\zeta_0 t_i^{1/\alpha_i})^{b - 1 + \sum_{j=1}^k \alpha_j}}{
        \prod\limits_{\substack{\zeta^{\alpha_i} = 1\\ \zeta\neq\zeta_0}} (\zeta_0 t_i^{1/\alpha_i} - \zeta t_i^{1/\alpha_i})
        \prod\limits_{\substack{j=1 \\ j\neq i}}^k ((\zeta_0 t_i^{1/\alpha_i})^{\alpha_j} - t_j)
        \prod\limits_{j=k+1}^n (1 - t_j (\zeta_0 t_i^{1/\alpha_i})^{\alpha_j}) }
    \\&\nonumber
    \quad =
    \frac{\zeta_0^{b - 1 + \sum_{j=1}^k \alpha_j} t_i^{(b - 1 + \sum_{j=1}^k \alpha_j)/\alpha_i}}{
        \zeta_0^{\alpha_i - 1} t_i^{(\alpha_i - 1)/\alpha_i}
        \prod\limits_{\substack{\zeta^{\alpha_i} = 1\\ \zeta\neq 1}} (1 - \zeta)
        \prod\limits_{\substack{j=1 \\ j\neq i}}^k (\zeta_0^{\alpha_j} t_i^{\alpha_j/\alpha_i} - t_j)
        \prod\limits_{j=k+1}^n (1 - t_j \zeta_0^{\alpha_j} t_i^{\alpha_j/\alpha_i} ) }
    \\&\label{eq:S1MaxLastTermExpr}
    \quad =
    \frac{\zeta_0^b t_i^{b/\alpha_i}}{ \alpha_i
        \prod\limits_{\substack{j=1 \\ j\neq i}}^k (1 - \zeta_0^{-\alpha_j} t_j t_i^{- \alpha_j/\alpha_i})
        \prod\limits_{j=k+1}^n (1 - \zeta_0^{\alpha_j} t_j t_i^{\alpha_j/\alpha_i} ) }.
\end{align}
Summing over all $i \leq k$ and $\zeta$ completes the proof.
\end{proof}

\begin{remark}
\label{rem:S1MaxSWeightsAllPos}
Note that Theorem~\ref{thrm:S1Max} applies in the somewhat trivial cases where the weights of $\bs{a}$ all have the same
sign. In this case, a monomial $x_1^{p_1}\cdots x_n^{p_n}$ is a covariant if and only if the $p_i$ are solutions to
$\sum_{i=1}^n p_i \alpha_i = \pm b$. As there are clearly only finitely many solutions, the covariants are always a
finite-dimensional vector space, which may be $0$ or only contain the constants.

If all weights are positive, then the first sum is empty and the Hilbert series is given by the second sum.
If $b = 0$, the second sum is $1$, as $\mathcal{S}_{\bs{a},0}$ contains only the zero solution, yielding a Hilbert series
of $1$. If $b > 0$, the second sum is $0$ as $\mathcal{S}_{\bs{a},b} = \emptyset$, yielding a Hilbert series of $0$.
If $b < 0$, then the second sum is (usually) nontrivial but will yield a polynomial with no constant term, as the constants
are clearly not covariant.

Now assume all weights are negative. If $b > -\sum_{i=1}^n\alpha_i$, then $\mathcal{S}_{\bs{a},b} = \emptyset$ so the Hilbert series
is given by the first sum. If $b = 0$, then the first sum is nontrivial but will always equal $1$, as only the constants are
invariant. If $b > 0$, then the first sum is nontrivial and will yield a polynomial.
If $b < 0$, then one easily checks from the definition that there are no covariants;
if $-\sum_{i=1}^n\alpha_i < b < 0$, then the first sum is nontrivial but will always yield $0$,
and if $b \leq -\sum_{i=1}^n\alpha_i$, then both sums can be nontrivial but the resulting Hilbert series is $0$.
\end{remark}

\begin{remark}
\label{rem:S1MaxSEmpty}
Note that $\Hilb_{\bs{a};b}^{\Sp^1}(\bs{t}) = \Hilb_{-\bs{a};-b}^{\Sp^1}(\bs{t})$ up to permuting the
$t_i$ to maintain the convention that negative weights appear first in $\bs{a}$. Hence, in applications
of Theorem~\ref{thrm:S1Max}, one may always reduce to a case where $b \geq 0$ (and, if $b = 0$ and all weights have the
same sign, all weights are negative) so that $\mathcal{S}_{\bs{a},b} = \emptyset$.
\end{remark}

\begin{remark}
\label{rem:S1MaxCovarViaInvar}
The Hilbert series of covariants can be computed in terms of the Hilbert series of invariants
by observing that the covariants $\HOM(V_{\bs{a}},W_{-b})^{\Sp^1}$ are precisely the invariants
of the representation with weight vector $(\bs{a},b)$ that are linear in the variable corresponding
to the weight $b$. Hence, up to permuting the $t_i$ to maintain our convention about ordering the
weights, $\Hilb_{\bs{a};b}^{\Sp^1}(\bs{t})$ is the degree $1$ coefficient of the series expansion
of $\Hilb_{(\bs{a},b);0}^{\Sp^1}(\bs{t})$ at $t_{n+1} = 0$ where $t_{n+1}$ is the variable associated
to the last weight $b$. In practice, adding the additional weight $b$ increases the computation time
so that applying Theorem~\ref{thrm:S1Max} directly is faster.
\end{remark}

We give the following examples to illustrate Theorem~\ref{thrm:S1Max}.

\begin{example}
\label{ex:S1Max1}
Consider the invariants (i.e., $b = 0$) of the representation $V_{\bs{a}}$ with weight vector $\bs{a} = (-1, -2, 1, 2)$.
Applying Theorem~\ref{thrm:S1Max} via the algorithm described in Section~\ref{sec:Algorithms},
the Hilbert series is given by
\begin{equation}
\label{eq:exS1Max}
    \Hilb_{(-1,-2,1,2),0}^{\Sp^1}(t_1,t_2,t_3,t_4)  =
    \frac{1 - t_1^2 t_2 t_3^2 t_4}
        {(1 - t_1 t_3) (1 - t_2 t_3^2)(1 - t_1^2 t_4)(1 - t_2 t_4)}.
\end{equation}
Lisiting all terms with exponents $\le 5$ in colexicographic order,
the Taylor expansion at $\bs{t} = (0,0,0,0)$ begins
\begin{align*}
    &1+t_1 t_3+t_1^2 t_3^2+t_2 t_3^2+t_1^3 t_3^3+t_1 t_2 t_3^3+t_1^4 t_3^4+t_1^2 t_2 t_3^4+t_2^2 t_3^4+t_1^5 t_3^5+t_1^3
   t_2 t_3^5+t_1 t_2^2 t_3^5+t_1^2 t_4+t_2 t_4+t_1^3 t_3 t_4
   \\&\quad
   +t_1 t_2 t_3 t_4+t_1^4 t_3^2 t_4+t_1^2 t_2 t_3^2 t_4+t_2^2t_3^2 t_4+t_1^5 t_3^3 t_4+t_1^3 t_2 t_3^3 t_4+t_1 t_2^2 t_3^3 t_4+t_1^4 t_2 t_3^4 t_4+t_1^2 t_2^2 t_3^4 t_4
   +t_2^3t_3^4 t_4
   \\&\quad
   +t_1^5 t_2 t_3^5 t_4+t_1^3 t_2^2 t_3^5 t_4+t_1 t_2^3 t_3^5 t_4+t_1^4 t_4^2+t_1^2 t_2t_4^2+t_2^2t_4^2+t_1^5 t_3t_4^2
   +t_1^3 t_2 t_3 t_4^2+t_1 t_2^2 t_3 t_4^2+t_1^4 t_2 t_3^2 t_4^2+t_1^2 t_2^2 t_3^2 t_4^2
   \\&\quad
   +t_2^3t_3^2 t_4^2+t_1^5 t_2 t_3^3t_4^2+t_1^3 t_2^2 t_3^3 t_4^2+t_1 t_2^3 t_3^3 t_4^2+t_1^4 t_2^2 t_3^4 t_4^2+t_1^2 t_2^3t_3^4 t_4^2+t_2^4 t_3^4 t_4^2+t_1^5 t_2^2 t_3^5 t_4^2+t_1^3 t_2^3 t_3^5 t_4^2+t_1 t_2^4 t_3^5 t_4^2
   \\&\quad
   +t_1^4 t_2t_4^3+t_1^2 t_2^2 t_4^3+t_2^3 t_4^3+t_1^5 t_2 t_3 t_4^3+t_1^3 t_2^2 t_3 t_4^3+t_1 t_2^3 t_3 t_4^3+t_1^4 t_2^2 t_3^2t_4^3+t_1^2 t_2^3 t_3^2 t_4^3+t_2^4 t_3^2 t_4^3+t_1^5 t_2^2 t_3^3 t_4^3+t_1^3 t_2^3 t_3^3 t_4^3
   \\&\quad
   +t_1 t_2^4 t_3^3t_4^3+t_1^4 t_2^3 t_3^4 t_4^3+t_1^2 t_2^4 t_3^4 t_4^3+t_2^5 t_3^4 t_4^3+t_1^5 t_2^3 t_3^5 t_4^3+t_1^3 t_2^4 t_3^5t_4^3+t_1 t_2^5 t_3^5 t_4^3+t_1^4 t_2^2 t_4^4+t_1^2 t_2^3 t_4^4+t_2^4 t_4^4+t_1^5 t_2^2 t_3 t_4^4
   \\&\quad
   +t_1^3 t_2^3 t_3t_4^4+t_1 t_2^4 t_3 t_4^4+t_1^4 t_2^3 t_3^2 t_4^4+t_1^2 t_2^4 t_3^2 t_4^4+t_2^5 t_3^2 t_4^4+t_1^5 t_2^3 t_3^3t_4^4+t_1^3 t_2^4 t_3^3 t_4^4+t_1 t_2^5 t_3^3 t_4^4+t_1^4 t_2^4 t_3^4 t_4^4+t_1^2 t_2^5 t_3^4 t_4^4
   \\&\quad
   +t_1^5 t_2^4t_3^5 t_4^4+t_1^3 t_2^5 t_3^5 t_4^4+t_1^4 t_2^3 t_4^5+t_1^2 t_2^4 t_4^5+t_2^5 t_4^5+t_1^5 t_2^3 t_3 t_4^5
   +t_1^3t_2^4 t_3 t_4^5+t_1 t_2^5 t_3 t_4^5+t_1^4 t_2^4 t_3^2 t_4^5+t_1^2 t_2^5 t_3^2 t_4^5+t_1^5 t_2^4 t_3^3 t_4^5
   \\&\quad
   +t_1^3t_2^5 t_3^3 t_4^5+t_1^4 t_2^5 t_3^4 t_4^5+t_1^5 t_2^5 t_3^5 t_4^5 + \cdots.
\end{align*}
Hence, each coefficient is either $0$ or $1$, and each nonzero term can be though of as an invariant in the variables
$(t_1,t_2,t_3,t_4)$. Via this interpretation, the above expansion yields a complete list of the invariants of
$\N^4$-degree $(d_1,d_2,d_3,d_4)$ with each $d_i \leq 5$. That is, the Hilbert series
$\Hilb_{(-1,-2,1,2),0}^{\Sp^1}(t_1,t_2,t_3,t_4)$ completely determines the algebra of invariants.

Note that in this case, the structure of the algebra of invariants is clear from Equation~\eqref{eq:exS1Max}.
In particular, the algebra of invariants in this case is generated by
the monomials $t_1 t_3$, $t_2 t_3^2$, $t_1^2 t_4$, and $t_2 t_4$ with the single relation
$(t_1 t_3)^2 (t_2 t_4) - (t_2 t_3^2) (t_1^2 t_4)$, see
\cite[Chapter~I, Theorem~2.3 and Corollary~3.8]{StanleyCombComAlg}.

Letting $V_{\bs{a}}$ be as above and $b = 2$, Theorem~\ref{thrm:S1Max} and Section~\ref{sec:Algorithms} yield
\[
    \Hilb_{(-1,-2,1,2),2}^{\Sp^1}(t_1,t_2,t_3,t_4)  =
    \frac{t_1^2+t_2-t_1^2 t_2 t_3^2-t_1^2 t_2 t_4}
        {(1 - t_1 t_3)(1 - t_2 t_3^2)(1 - t_1^2 t_4)(1 - t_2 t_4)},
\]
whose expansion begins
\begin{align*}
   &t_1^2+t_2+t_1^3 t_3+t_1 t_2 t_3+t_1^4 t_3^2+t_1^2 t_2 t_3^2+t_2^2 t_3^2+t_1^5 t_3^3+t_1^3 t_2 t_3^3+t_1 t_2^2
   t_3^3+t_1^4 t_2 t_3^4+t_1^2 t_2^2 t_3^4+t_2^3 t_3^4+t_1^5 t_2 t_3^5+t_1^3 t_2^2 t_3^5
   \\&\quad
   +t_1 t_2^3 t_3^5+t_1^4
   t_4+t_1^2 t_2 t_4+t_2^2 t_4+t_1^5 t_3 t_4+t_1^3 t_2 t_3 t_4+t_1 t_2^2 t_3 t_4+t_1^4 t_2 t_3^2 t_4+t_1^2 t_2^2 t_3^2
   t_4+t_2^3 t_3^2 t_4+t_1^5 t_2 t_3^3 t_4
   \\&\quad
   +t_1^3 t_2^2 t_3^3 t_4+t_1 t_2^3 t_3^3 t_4+t_1^4 t_2^2 t_3^4 t_4+t_1^2 t_2^3
   t_3^4 t_4+t_2^4 t_3^4 t_4+t_1^5 t_2^2 t_3^5 t_4+t_1^3 t_2^3 t_3^5 t_4+t_1 t_2^4 t_3^5 t_4+t_1^4 t_2 t_4^2+t_1^2
   t_2^2 t_4^2+t_2^3 t_4^2
   \\&\quad
   +t_1^5 t_2 t_3 t_4^2+t_1^3 t_2^2 t_3 t_4^2+t_1 t_2^3 t_3 t_4^2+t_1^4 t_2^2 t_3^2 t_4^2+t_1^2
   t_2^3 t_3^2 t_4^2+t_2^4 t_3^2 t_4^2+t_1^5 t_2^2 t_3^3 t_4^2+t_1^3 t_2^3 t_3^3 t_4^2+t_1 t_2^4 t_3^3 t_4^2+t_1^4
   t_2^3 t_3^4 t_4^2
   \\&\quad
   +t_1^2 t_2^4 t_3^4 t_4^2+t_2^5 t_3^4 t_4^2+t_1^5 t_2^3 t_3^5 t_4^2+t_1^3 t_2^4 t_3^5 t_4^2+t_1
   t_2^5 t_3^5 t_4^2+t_1^4 t_2^2 t_4^3+t_1^2 t_2^3 t_4^3+t_2^4 t_4^3+t_1^5 t_2^2 t_3 t_4^3+t_1^3 t_2^3 t_3 t_4^3+t_1
   t_2^4 t_3 t_4^3
   \\&\quad
   +t_1^4 t_2^3 t_3^2 t_4^3+t_1^2 t_2^4 t_3^2 t_4^3+t_2^5 t_3^2 t_4^3+t_1^5 t_2^3 t_3^3 t_4^3+t_1^3
   t_2^4 t_3^3 t_4^3+t_1 t_2^5 t_3^3 t_4^3+t_1^4 t_2^4 t_3^4 t_4^3+t_1^2 t_2^5 t_3^4 t_4^3+t_1^5 t_2^4 t_3^5
   t_4^3+t_1^3 t_2^5 t_3^5 t_4^3
   \\&\quad
   +t_1^4 t_2^3 t_4^4+t_1^2 t_2^4 t_4^4+t_2^5 t_4^4+t_1^5 t_2^3 t_3 t_4^4+t_1^3 t_2^4 t_3
   t_4^4+t_1 t_2^5 t_3 t_4^4+t_1^4 t_2^4 t_3^2 t_4^4+t_1^2 t_2^5 t_3^2 t_4^4+t_1^5 t_2^4 t_3^3 t_4^4+t_1^3 t_2^5 t_3^3
   t_4^4+t_1^4 t_2^5 t_3^4 t_4^4
   \\&\quad
   +t_1^5 t_2^5 t_3^5 t_4^4+t_1^4 t_2^4 t_4^5+t_1^2 t_2^5 t_4^5+t_1^5 t_2^4 t_3
   t_4^5+t_1^3 t_2^5 t_3 t_4^5+t_1^4 t_2^5 t_3^2 t_4^5+t_1^5 t_2^5 t_3^3 t_4^5 + \cdots.
\end{align*}
\end{example}

\begin{example}
\label{ex:S1Max2}
To illustrate Remark~\ref{rem:S1MaxSEmpty}, consider the representation $V_{\bs{a}}$ with weight vector
$\bs{a} = (-2, 2, 5)$ and $b = -4$. Applying Theorem~\ref{thrm:S1Max} directly, the first sum in Equation~\eqref{eq:S1Max}
is equal to
\[
    \frac{1}{t_1^2 (1 - t_1 t_2)(1 - t_1^5 t_3^2)},
\]
the set $\mathcal{S}_{(-2,2,5),-4} = \{ (1,0,0); (0,1,0) \}$, and so the second sum is equal to
\[
    -\frac{1}{t_1^2}-\frac{t_2}{t_1},
\]
yielding a Hilbert series of
\[
    \Hilb_{(-2,2,5),-4}^{\Sp^1}(t_1,t_2,t_3)  =
    \frac{t_2^2 + t_1^3 t_3^2 - t_1^5 t_2^2 t_3^2}
        {(1 - t_1 t_2)(1 - t_1^5 t_3^2)}.
\]
As noted in Remark~\ref{rem:S1MaxSEmpty}, consideration of $\mathcal{S}_{(-2,2,5),-4}$ can be avoided by considering
instead $-\bs{a}$ and $-b$ (permuting coordinates to conform to our sign conventions). In this case, the second sum in
Equation~\eqref{eq:S1Max} is empty and the first yields
\[
    \Hilb_{(-2,-5,2),4}^{\Sp^1}(t_1,t_2,t_3)  =
    \frac{t_1^2 + t_2^2 t_3^3 - t_1^2 t_2^2 t_3^5}
        {(1 - t_1 t_3)(1 - t_2^2 t_3^5)},
\]
which yields the same result up to permuting variables.
\end{example}

\begin{example}
\label{ex:S1Max3}
To illustrate Remark~\ref{rem:S1MaxCovarViaInvar}, let $\bs{a} = (-1,2,3)$ and $b = 3$. The second sum in
Equation~\eqref{eq:S1Max} is empty, and the first yields
\[
    \Hilb_{(-1,2,3),3}^{\Sp^1}(t_1,t_2,t_3)  =
    \frac{t_1^3}{(1 - t_1^2 t_2)(1 - t_1^3 t_3)}.
\]
Alternatively, one can compute the Hilbert series of the invariants of the representation with weight matrix
$(-1,2,3,3)$, yielding
\[
    \Hilb_{(-1,2,3,3),0}^{\Sp^1}(t_1,t_2,t_3,t_4)  =
    \frac{1}{(1 - t_1^2 t_2)(1 - t_1^3 t_3)(1 - t_1^3 t_4)}.
\]
Expanding at $t_4 = 0$ and extracting the coefficient of $t_4$ recovers $\Hilb_{(-1,2,3),3}^{\Sp^1}(t_1,t_2,t_3)$.
\end{example}

Substituting $t_i = t$ in the expression in Theorem~\ref{thrm:S1Max} and applying the analytic continuation argument of
\cite[Section~3.3]{HerbigHerdenSeatonT2} in the case when the $\alpha_i$ for $1\leq i\leq k$ are not distinct
yields the unviariate Hilbert series of the covariants. In particular, setting $b = 0$, this recovers the expression for
the Hilbert series of the invariants in \cite[Theorem~3.3]{CowieHerbigSeatonHerden}.

\begin{corollary}[Univariate Hilbert series of $\Sp^1$-covariants]
\label{cor:S1Univar}
Let $V\simeq \C^n$ be a representation of $\Sp^1$ with weight vector
$\bs{a} = (-\alpha_1,\ldots,-\alpha_k,\alpha_{k+1},\ldots,\alpha_n)$ where each $\alpha_i > 0$, and let $W\simeq \C$ be the
irreducible representation of the circle with weight $-b$. The univariate Hilbert series $\Hilb_{\bs{a};b}^{\Sp^1}(t)$ of
the module of covariants $\HOM(V,W)^{\Sp^1} = (\C[V]\otimes W)^{\Sp^1}$ is given by
\begin{equation}
    \label{eq:S1Univar}
    \Hilb_{\bs{a};b}^{\Sp^1}(t)
    =
    \lim\limits_{(c_1,\ldots,c_n)\to\bs{a}}
    \sum_{\substack{i=1 \\ \zeta^{\alpha_i} = 1}}^k  \frac{\zeta^b t^{b/\alpha_i}}{ \alpha_i
        \prod\limits_{\substack{j=1 \\ j\neq i}}^n (1 - \zeta^{a_j} t^{(c_i - c_j)/c_i}) }
        + (-1)^k \sum\limits_{\bs{y}\in\mathcal{S}_{\bs{a},b}}
            t^{- \sum_{i=1}^k (y_i + 1) + \sum_{i=k+1}^n y_i},
\end{equation}
where $\mathcal{S}_{\bs{a},b}$ is as in Theorem~\ref{thrm:S1Max}.
\end{corollary}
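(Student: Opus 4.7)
The plan is to specialize Theorem~\ref{thrm:S1Max} by setting $t_i = t$ for all $i = 1, \ldots, n$. The sum over $\mathcal{S}_{\bs{a},b}$, which comes from the residue at $z = 0$, specializes directly: each monomial $\prod_{i=1}^k t_i^{-y_i - 1} \prod_{i=k+1}^n t_i^{y_i}$ collapses to $t^{-\sum_{i=1}^k (y_i + 1) + \sum_{i=k+1}^n y_i}$, matching the second sum in Equation~\eqref{eq:S1Univar}. The principal task is to understand the specialization of the first sum, where naive substitution generally produces indeterminate expressions.

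The derivation of Theorem~\ref{thrm:S1Max} used a genericity assumption on the $t_i$ to ensure that the poles $\zeta t_i^{1/\alpha_i}$ of the Molien--Weyl integrand remain simple and distinct. Upon substituting $t_i = t$, the factor $1 - \zeta^{a_j} t_j t_i^{-a_j/a_i}$ specializes to $1 - \zeta^{a_j} t^{(a_i - a_j)/a_i}$, which reduces to $1 - \zeta^{a_j}$ whenever $\alpha_i = \alpha_j$---precisely the situation in which two of the poles of the Molien--Weyl integrand merge. If additionally $\zeta^{a_j} = 1$, individual terms of the first sum diverge, yet the univariate Molien--Weyl integral is a well-defined rational function of $t$, so such divergences must cancel when the terms are summed.

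To carry out the specialization rigorously, I would adopt the analytic continuation technique of \cite[Section~3.3]{HerbigHerdenSeatonT2}. Introduce auxiliary parameters $c_1, \ldots, c_n$ varying in a neighborhood of $\bs{a}$, and in each exponent of $t$ arising after the substitution $t_i = t$, replace the ratios $a_j/a_i$ by $c_j/c_i$, leaving the factors $\zeta^{a_j}$ and the indexing $\zeta^{\alpha_i} = 1$ unchanged. For generic $(c_1,\ldots,c_n)$ near $\bs{a}$, the exponents $(c_i - c_j)/c_i$ are nonzero whenever $c_i \neq c_j$, so each term in the first sum is a well-defined rational function of $t$, and the whole expression still equals $\Hilb_{\bs{a};b}^{\Sp^1}(t)$ because the individual numerical values of the $c_i$ enter only through the perturbed exponents and do not affect the Hilbert series itself.

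The main obstacle is verifying that the limit $(c_1, \ldots, c_n) \to \bs{a}$ exists in the combined sum, i.e., that singular contributions from merging poles cancel among distinct summands. This follows abstractly from the fact that the perturbed expression equals $\Hilb_{\bs{a};b}^{\Sp^1}(t)$ for all sufficiently generic $(c_i)$ near $\bs{a}$ and this common value is independent of $(c_i)$; common factors of $c_i - c_j$ in numerator and denominator therefore cancel, and standard removal-of-singularities (or, equivalently, an l'H\^opital-style computation of higher-order residues as limits of sums of simple-pole residues) guarantees that the limit exists and reproduces the higher-order residue expansion at the merged poles. Finally, setting $b = 0$ with $k \geq 1$ forces $-\sum_{i=1}^k \alpha_i < 0$ so that $\mathcal{S}_{\bs{a},0} = \emptyset$ and the second sum vanishes, recovering the formula of \cite[Theorem~3.3]{CowieHerbigSeatonHerden}.
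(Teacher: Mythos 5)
Your proposal is correct and follows the paper's own derivation: the paper obtains Corollary~\ref{cor:S1Univar} precisely by substituting $t_i = t$ in Theorem~\ref{thrm:S1Max} and invoking the analytic continuation argument of \cite[Section~3.3]{HerbigHerdenSeatonT2} when the $\alpha_i$ for $1\leq i\leq k$ are not distinct, which is exactly your plan, including the termwise specialization of the $\mathcal{S}_{\bs{a},b}$-sum. One correction to your justification of the limit: the perturbed sum does \emph{not} equal $\Hilb_{\bs{a};b}^{\Sp^1}(t)$ for generic $(c_i)\neq\bs{a}$ (already for $\bs{a}=(-1,1)$, $b=0$ it is $(1-t^{(c_1-c_2)/c_1})^{-1}\neq(1-t^2)^{-1}$ off the limit), so the ``common value independent of $(c_i)$'' argument is circular; what actually carries the proof is the second mechanism you name, namely that the perturbed sum is the sum of simple residues of an integrand whose poles merge as $(c_i)\to\bs{a}$, and continuity of the contour integral in these parameters forces the limit to exist and to reproduce the higher-order residues, hence the Hilbert series.
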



\subsection{The $\N^2$-grading of a cotangent-lifted representation and the symplectic quotient}
\label{subsec:S1Cot}

Suppose $V \simeq \C^{2n}$ is a cotangent-lifted representation of $\Sp^1$, meaning that the
weight vector is of the form $\bs{a}^c = (\bs{a}, -\bs{a})$ for a weight vector
$\bs{a}\in\Z^n$. In this case, we use the notation
\[
    \bs{a}^c=   (a_1,\ldots,a_n,-a_1,\ldots,-a_n)
            =   (-\alpha_1,\ldots,-\alpha_k,\alpha_{k+1},\ldots,\alpha_n,
                    \alpha_1,\ldots,\alpha_k,-\alpha_{k+1},\ldots,-\alpha_n),
\]
continuing the convention that each $\alpha_j = \lvert a_j\rvert$.
We again assume that no weight is zero, i.e., that $V$ has no trivial factors, to avoid trivialities.

We consider the decomposition $V_{\bs{a}^c} = V_1\oplus V_2$ where $V_1$ has weight vector $\bs{a}$
and $V_2$ has weight vector $-\bs{a}$ and let $(d_1,d_2)$ denote the corresponding grading
with formal variables $(s,t)$. Then we have the following.

\begin{corollary}[Bivariate Hilbert series of $\Sp^1$-covariants of a cotangent-lifted representation]
\label{cor:S1CotanBivar}
Let $V\simeq \C^n$ be a representation of $\Sp^1$ with weight vector
$\bs{a} = (-\alpha_1,\ldots,-\alpha_k,\alpha_{k+1},\ldots,\alpha_n)$ where each $\alpha_i > 0$,
let $\bs{a}^c = (\bs{a}, -\bs{a})$ denote the weight vector of the cotangent lift of $V$, and let $W\simeq \C$ be the
irreducible representation of the circle with weight $-b$. Let
$\Hilb_{\bs{a}^c;b}^{\Sp^1}(s,t)$ denote the bivariate Hilbert series of the
covariants ${\HOM(V\oplus V^\ast,W)^{\Sp^1}} = (\C[V\oplus V^\ast]\otimes W)^{\Sp^1}$. Then
\begin{multline}
\label{eq:S1CotanBivarDegen}
    \Hilb_{\bs{a}^c;b}^{\Sp^1}(s,t) = \lim\limits_{(c_1,\ldots,c_n)\to\bs{a}}
    \frac{1}{1 - st}\left(\sum_{\substack{i=1 \\ \zeta^{\alpha_i} = 1}}^k  \frac{\zeta^b s^{b/\alpha_i}}{ \alpha_i
        \prod\limits_{\substack{j=1 \\ j\neq i}}^n
            (1 - \zeta^{a_j} s^{(c_i - c_j)/c_i} )
            (1 - \zeta^{-a_j} s^{c_j/c_i} t) }
    \right.\\ \left.
    + \sum_{\substack{i=k+1 \\ \zeta^{\alpha_i} = 1}}^n \frac{\zeta^b t^{b/\alpha_i}}{ \alpha_i
        \prod\limits_{\substack{j=1 \\ j\neq i}}^n
            (1 - \zeta^{a_j} s t^{c_j/c_i} )
            (1 - \zeta^{-a_j} t^{(c_i - c_j)/c_i} ) }\right)
    + (-1)^n \sum\limits_{(\bs{y}, \bs{y'})\in\mathcal{S}_{\bs{a}^c,b}}
        \prod\limits_{i=1}^k s^{- y_i - 1} t^{y_i^\prime}
        \prod\limits_{i=k+1}^n s^{y_i} t^{- y_i^\prime - 1},
\end{multline}
where
\begin{equation}
\label{eq:S1CotanBivarS}
    \mathcal{S}_{\bs{a}^c,b}
        =    \left\{ (\bs{y}, \bs{y'}) \in \N^{2n} :
                \sum\limits_{j=1}^n (y_j + y_j^\prime) \alpha_j
                        = - b  - \sum_{j=1}^n \alpha_j \right\},
\end{equation}
which is empty if $b + \sum\limits_{j=1}^n \alpha_j > 0$.
If the $a_i$ are distinct, then the limit is unnecessary and Equation~\eqref{eq:S1CotanBivarDegen} becomes
\begin{multline*}
    \Hilb_{\bs{a}^c;b}^{\Sp^1}(s,t) =
    \frac{1}{1 - st}\left(\sum_{\substack{i=1 \\ \zeta^{\alpha_i} = 1}}^k  \frac{\zeta^b s^{b/\alpha_i}}{ \alpha_i
        \prod\limits_{\substack{j=1 \\ j\neq i}}^n
            (1 - \zeta^{a_j} s^{(a_i - a_j)/a_i} )
            (1 - \zeta^{-a_j} s^{a_j/a_i} t) }
    \right.\\ \left.
    + \sum_{\substack{i=k+1 \\ \zeta^{\alpha_i} = 1}}^n \frac{\zeta^b t^{b/\alpha_i}}{ \alpha_i
        \prod\limits_{\substack{j=1 \\ j\neq i}}^n
            (1 - \zeta^{a_j} s t^{a_j/a_i} )
            (1 - \zeta^{-a_j} t^{(a_i - a_j)/a_i} ) }\right)
    + (-1)^n \sum\limits_{(\bs{y}, \bs{y'})\in\mathcal{S}_{\bs{a}^c,b}}
        \prod\limits_{i=1}^k s^{- y_i - 1} t^{y_i^\prime}
        \prod\limits_{i=k+1}^n s^{y_i} t^{- y_i^\prime - 1}.
\end{multline*}
\end{corollary}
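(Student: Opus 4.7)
The plan is to apply the Molien--Weyl integral formula directly, repeating the residue computation from the proof of Theorem~\ref{thrm:S1Max} but tailored to the cotangent-lifted weight vector $\bs{a}^c = (a_1,\ldots,a_n,-a_1,\ldots,-a_n)$ and the bigrading with $s$ attached to the first $n$ coordinates and $t$ to the last $n$. First I write
\[
    \Hilb_{\bs{a}^c;b}^{\Sp^1}(s,t)
    = \frac{1}{2\pi\sqrt{-1}} \int_{\Sp^1} \frac{z^b \, dz}{z \prod_{j=1}^n (1 - s z^{a_j})(1 - t z^{-a_j})}
\]
and clear negative $z$-powers by multiplying by $z^{\sum_{j=1}^n \alpha_j}$. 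The paired factors at index $j$ become $(z^{\alpha_j} - s)(1 - tz^{\alpha_j})$ for $j \leq k$ and $(1 - sz^{\alpha_j})(z^{\alpha_j} - t)$ for $j > k$, and the numerator becomes $z^{b - 1 + \sum_{j=1}^n \alpha_j}$.

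Next I compute the residue at $z = 0$, which is nonzero precisely when $b + \sum_{j=1}^n \alpha_j \leq 0$, matching the condition for $\mathcal{S}_{\bs{a}^c,b}$ to potentially be nonempty. Expanding each of the four types of factors as a geometric series in $z$ as in the proof of Theorem~\ref{thrm:S1Max} and extracting the $z^{-1}$ coefficient via the Cauchy product yields a sum over $(y_j,y_j') \in \N^{2n}$ with $\sum_{j=1}^n(y_j + y_j')\alpha_j = -b - \sum_{j=1}^n \alpha_j$. The $k$ factors $\frac{1}{z^{\alpha_j} - s}$ and the $n-k$ factors $\frac{1}{z^{\alpha_j} - t}$ each contribute a minus sign, giving a total sign of $(-1)^n$ together with the monomial $\prod_{i=1}^k s^{-y_i - 1} t^{y_i'} \prod_{i=k+1}^n s^{y_i} t^{-y_i' - 1}$ appearing in Equation~\eqref{eq:S1CotanBivarDegen}.

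The residues at the nonzero poles $z = \zeta s^{1/\alpha_i}$ (for $i \leq k$, $\zeta^{\alpha_i} = 1$) and $z = \zeta t^{1/\alpha_i}$ (for $i > k$, $\zeta^{\alpha_i} = 1$) are then computed exactly as in Equation~\eqref{eq:S1MaxLastTermExpr}. The essential new phenomenon, and the source of the factor $\frac{1}{1-st}$, is that at $z = \zeta s^{1/\alpha_i}$ the dual companion factor $(1 - tz^{\alpha_i})$ evaluates to $1 - t\zeta^{\alpha_i} s = 1 - st$ using $\zeta^{\alpha_i} = 1$, and symmetrically $(1 - sz^{\alpha_i})$ evaluates to $1 - st$ at $z = \zeta t^{1/\alpha_i}$. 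Since this value is independent of both $i$ and $\zeta$, the factor $\frac{1}{1-st}$ pulls out of both double sums, producing the two interior sums of Equation~\eqref{eq:S1CotanBivarDegen}.

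For the degenerate case in which some $\alpha_i$ coincide, I invoke the analytic-continuation argument of Corollary~\ref{cor:S1Univar} and \cite[Section~3.3]{HerbigHerdenSeatonT2}: introduce formal parameters $c_i \to a_i$, perform the residue calculation as if the $c_i$ were distinct, and pass to the limit. This is the main technical subtlety, since in the degenerate case the simple poles of the generic computation merge into higher-order poles and the residue formulas are no longer literally valid before taking the limit. However, the cited arguments show that the rational expression obtained generically extends continuously to the degenerate case, and the final claim of the corollary is simply that when the $\alpha_i$ are distinct no such limit is required.
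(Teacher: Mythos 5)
Your proof is correct, but it takes a different route from the paper's. The paper obtains Corollary~\ref{cor:S1CotanBivar} as a two-line specialization of Theorem~\ref{thrm:S1Max}: it applies that theorem to the $2n$-entry weight vector $\bs{a}^c$ and substitutes $t_i = s$ for $1 \leq i \leq n$ and $t_i = t$ for $n+1 \leq i \leq 2n$; the factor $\frac{1}{1-st}$ then emerges because the factor in Equation~\eqref{eq:S1Max} indexed by the dual partner $j = i \pm n$ collapses to $(1 - \zeta^{-a_i} s t) = (1-st)$ under the substitution, and the $z=0$ residue term specializes to the $(-1)^n$ sum over $\mathcal{S}_{\bs{a}^c,b}$. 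You instead rerun the entire Molien--Weyl residue computation from scratch for the doubled representation, identifying the same phenomenon at the level of the integrand (the companion factor $(1 - tz^{\alpha_i})$ evaluating to $1-st$ at each pole $z = \zeta s^{1/\alpha_i}$, and symmetrically). The two computations are algebraically isomorphic, so nothing is lost; what the paper's approach buys is brevity and reuse of the already-verified Equation~\eqref{eq:S1MaxLastTermExpr}, while yours is self-contained and makes the origin of the $\frac{1}{1-st}$ prefactor and the $(-1)^n$ sign more transparent. Two small points of care: the residue at $z=0$ is \emph{potentially} nonzero when $b + \sum_{j=1}^n \alpha_j \leq 0$ (the set $\mathcal{S}_{\bs{a}^c,b}$ can still be empty for divisibility reasons when the inequality is strict), and your appeal to analytic continuation for coincident $\alpha_i$ is exactly the mechanism the paper invokes, so that step is sound as stated.
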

\begin{proof}
Using the weight vector $\bs{a}^c$ and substituting $t_i = s$
for $1\leq i \leq n$ and $t_i = t$ for $n+1\leq i \leq 2n$, the first sum in Equation~\eqref{eq:S1Max}
becomes
\[
    \sum_{\substack{i=1 \\ \zeta^{\alpha_i} = 1}}^k  \frac{(1 - st)^{-1}\zeta^b s^{b/\alpha_i}}{ \alpha_i
        \prod\limits_{\substack{j=1 \\ j\neq i}}^n
            (1 - \zeta^{a_j} s^{1 - a_j/a_i} )
            (1 - \zeta^{-a_j} s^{a_j/a_i} t) }
    +
    \sum_{\substack{i=k+1 \\ \zeta^{\alpha_i} = 1}}^n \frac{(1 - st)^{-1}\zeta^b t^{b/\alpha_i}}{ \alpha_i
        \prod\limits_{\substack{j=1 \\ j\neq i}}^n
            (1 - \zeta^{a_j} s t^{a_j/a_i} )
            (1 - \zeta^{-a_j} t^{1 - a_j/a_i} ) }.
\]
Note that if the $a_i$ are not distinct, then the factors of the form $(1 - \zeta^{a_j} s^{1 - a_j/a_i} )$
introduce singularities when $a_i = a_j$; in this case, we apply the analytic continuation argument of
\cite[Section~3.3]{HerbigHerdenSeatonT2} and take the limit as parameters $c_i$ approach the $a_i$.
If $b + \sum_{j=1}^n \alpha_j \le 0$, then we specialize Equation~\eqref{eq:S1MaxS} to the case at hand as
Equation~\eqref{eq:S1CotanBivarS}, and the second sum in Equation~\eqref{eq:S1Max} becomes
\begin{align*}
    &(-1)^n \sum\limits_{(\bs{y}, \bs{y'})\in\mathcal{S}_{\bs{a}^c,b}}
        \prod\limits_{i=1}^k s^{- y_i - 1}
        \prod\limits_{i=k+1}^n t^{- y_i^\prime - 1}
        \prod\limits_{i=k+1}^n s^{y_i}
        \prod\limits_{i=1}^k t^{y_i^\prime}
    \\&=(-1)^n \sum\limits_{(\bs{y}, \bs{y'})\in\mathcal{S}_{\bs{a}^c,b}}
        \prod\limits_{i=1}^k s^{- y_i - 1} t^{y_i^\prime}
        \prod\limits_{i=k+1}^n s^{y_i} t^{- y_i^\prime - 1},
\end{align*}
completing the proof.
\end{proof}

\begin{remark}
\label{rem:S1BivarSEmpty}
Using the observation of Remark~\ref{rem:S1MaxSEmpty} and the fact that $\bs{a}^c = -\bs{a}^c$
up to permuting weights, we have $\Hilb_{\bs{a}^c;b}^{\Sp^1}(s,t) = \Hilb_{\bs{a}^c;-b}^{\Sp^1}(t,s)$.
Hence, applications of Corollary~\ref{cor:S1CotanBivar} can always be reduced to a case where
$\mathcal{S}_{\bs{a}^c,b} = \emptyset$.
\end{remark}

Taking the limit as $s\to t$ and, for $1\leq i\leq k$, relabeling $\zeta$ as $\zeta^{-1}$ to permute
terms and express them uniformly, we obtain the following.

\begin{corollary}[Univariate Hilbert series of $\Sp^1$-covariants of a cotangent-lifted representation]
\label{cor:S1CotanUnivar}
Let $V\simeq \C^n$ be a representation of $\Sp^1$ with weight vector
$\bs{a} = (-\alpha_1,\ldots,-\alpha_k,\alpha_{k+1},\ldots,\alpha_n)$ where each $\alpha_i > 0$,
let $\bs{a}^c = (\bs{a}, -\bs{a})$ denote the weight vector of the cotangent lift of $V$, and let $W\simeq \C$ be the
irreducible representation of the circle with weight $-b$. Let
$\Hilb_{\bs{a}^c;b}^{\Sp^1}(t)$ denote the univariate Hilbert series of the
covariants ${\HOM(V\oplus V^\ast,W)^{\Sp^1}} = (\C[V\oplus V^\ast]\otimes W)^{\Sp^1}$. Then
\begin{multline}
\label{eq:S1CotanUnivarDegen}
    \Hilb_{\bs{a}^c;b}^{\Sp^1}(t) = \lim\limits_{(c_1,\ldots,c_n)\to\bs{a}}
    \frac{1}{1 - t^2} \sum_{\substack{i=1 \\ \zeta^{\alpha_i} = 1}}^n
        \frac{\zeta^{a_i b/\alpha_i} t^{b/\alpha_i}}{ \alpha_i
        \prod\limits_{\substack{j=1 \\ j\neq i}}^n
            (1 - \zeta^{-a_j} t^{(c_i - c_j)/c_i} )
            (1 - \zeta^{a_j} t^{(c_i + c_j)/c_i} ) }
    \\
    + (-1)^n \sum\limits_{(\bs{y}, \bs{y'})\in\mathcal{S}_{\bs{a}^c,b}}
        \prod\limits_{i=1}^k t^{y_i^\prime - y_i - 1}
        \prod\limits_{i=k+1}^n  t^{y_i - y_i^\prime - 1},
\end{multline}
where $\mathcal{S}_{\bs{a}^c,b}$ is as defined in Equation~\eqref{eq:S1CotanBivarS}.
If the $\alpha_i$ are distinct, then the limit is unnecessary and the right-hand side of Equation~\eqref{eq:S1CotanUnivarDegen} becomes
\begin{multline*}
    \frac{1}{1 - t^2} \sum_{\substack{i=1 \\ \zeta^{\alpha_i} = 1}}^n
        \frac{\zeta^{a_i b/\alpha_i} t^{b/\alpha_i}}{ \alpha_i
        \prod\limits_{\substack{j=1 \\ j\neq i}}^n
            (1 - \zeta^{-a_j} t^{(a_i - a_j)/a_i} )
            (1 - \zeta^{a_j} t^{(a_i + a_j)/a_i} ) }
    + (-1)^n \sum\limits_{(\bs{y}, \bs{y'})\in\mathcal{S}_{\bs{a}^c,b}}
        \prod\limits_{i=1}^k t^{y_i^\prime - y_i - 1}
        \prod\limits_{i=k+1}^n t^{y_i - y_i^\prime - 1}.
\end{multline*}
\end{corollary}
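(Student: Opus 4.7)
The plan is to deduce Corollary~\ref{cor:S1CotanUnivar} directly from Corollary~\ref{cor:S1CotanBivar} by specializing $s = t$ and then performing a single re-indexing of the roots of unity that unifies the two character sums appearing in the bivariate formula.

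First, one sets $s = t$ throughout Equation~\eqref{eq:S1CotanBivarDegen}. The prefactor $(1 - st)^{-1}$ becomes $(1 - t^2)^{-1}$. In the denominator of the first character sum (indexed by $i \leq k$), the factor types $(1 - \zeta^{a_j} s^{(c_i - c_j)/c_i})$ and $(1 - \zeta^{-a_j} s^{c_j/c_i} t)$ become $(1 - \zeta^{a_j} t^{(c_i - c_j)/c_i})$ and $(1 - \zeta^{-a_j} t^{(c_i + c_j)/c_i})$ respectively, using $c_j/c_i + 1 = (c_i + c_j)/c_i$. In the denominator of the second character sum (indexed by $i > k$), the factor types $(1 - \zeta^{a_j} s t^{c_j/c_i})$ and $(1 - \zeta^{-a_j} t^{(c_i - c_j)/c_i})$ become $(1 - \zeta^{a_j} t^{(c_i + c_j)/c_i})$ and $(1 - \zeta^{-a_j} t^{(c_i - c_j)/c_i})$. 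The residual discrete sum over $\mathcal{S}_{\bs{a}^c,b}$ collapses monomially: $s^{-y_i - 1} t^{y_i^\prime} \mapsto t^{y_i^\prime - y_i - 1}$ and $s^{y_i} t^{-y_i^\prime - 1} \mapsto t^{y_i - y_i^\prime - 1}$, which matches the claimed formula.

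Next, in the first character sum the summation variable $\zeta$ is replaced by $\zeta^{-1}$. Since this is a bijection on the set of $\alpha_i$th roots of unity, the sum is unchanged in value. Under this relabeling, $\zeta^b$ becomes $\zeta^{-b}$, and using $a_i = -\alpha_i$ for $i \leq k$ this equals $\zeta^{a_i b/\alpha_i}$; simultaneously, the denominator factors $(1 - \zeta^{a_j} t^{(c_i - c_j)/c_i})(1 - \zeta^{-a_j} t^{(c_i + c_j)/c_i})$ become $(1 - \zeta^{-a_j} t^{(c_i - c_j)/c_i})(1 - \zeta^{a_j} t^{(c_i + c_j)/c_i})$. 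These expressions now agree symbolically with those from the second character sum, where $a_i = +\alpha_i$ automatically gives $\zeta^{a_i b/\alpha_i} = \zeta^b$. The two sums therefore merge into the single sum over $i = 1, \ldots, n$ of the stated form, with $t^{b/|c_i|}$ in the numerator (noting $b/\alpha_i = b/|c_i|$ in either sign case).

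The only delicate point is the validity of the substitution $s = t$, which can introduce removable coincidences among denominator factors (for instance when some $c_j = -c_i$ a pair of exponents on $t$ collide with $0$). This is precisely the kind of singularity handled by the analytic continuation argument of \cite[Section~3.3]{HerbigHerdenSeatonT2}: one treats the $c_i$ as independent parameters, performs both the substitution $s = t$ and the $\zeta \mapsto \zeta^{-1}$ relabeling at the level of rational functions where the expression is well-defined, and only afterwards takes the limit $(c_1,\ldots,c_n) \to \bs{a}$. This is expected to be the main technical point, but it is essentially identical to the degeneracy treatment already invoked in the proof of Corollary~\ref{cor:S1CotanBivar}, so the argument transfers without modification. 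In the non-degenerate case where the $\alpha_i$ are distinct (and no accidental cancellations occur in the specialization), the limit is unnecessary and the simpler second formula follows by direct substitution.
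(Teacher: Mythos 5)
Your proposal is correct and follows essentially the same route as the paper, which obtains this corollary from Corollary~\ref{cor:S1CotanBivar} precisely by taking the limit $s\to t$ and relabeling $\zeta$ as $\zeta^{-1}$ in the sum over $i\leq k$; your verification that $\zeta^{-b}=\zeta^{a_ib/\alpha_i}$ for $a_i=-\alpha_i$ and that the denominator factors then match the $i>k$ terms is exactly the intended bookkeeping. The remark about handling coincident exponents via the analytic continuation of \cite[Section~3.3]{HerbigHerdenSeatonT2} is likewise consistent with the paper's treatment.
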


In the case of the on-shell invariants associated to the symplectic quotient, the moment map associated to $\bs{a}$ in complex coordinates
$\bs{x} = (x_1,\ldots,x_n)$ for $V$ is given by
\[
    J(\bs{x})
    =
    \sum\limits_{i=1}^n a_i x_i \overline{x_i}
\]
and hence has degree $(1,1)$. Then $\Hilb_{\bs{a}}^{\Sp^1,\on}(s,t) = (1 - st)\Hilb_{\bs{a}}^{\Sp^1,\off}(s,t)$,
where $\Hilb_{\bs{a}}^{\Sp^1,\off}(s,t) = \Hilb_{\bs{a}^c;0}^{\Sp^1}(s,t)$ denotes the corresponding Hilbert series of off-shell invariants;
see \cite[Proposition~2.1]{HerbigSeaton}. We therefore have the following.

\begin{corollary}[Bivariate Hilbert series of the on-shell invariants of an $\Sp^1$-symplectic quotient]
\label{cor:S1SymplecticBivar}
Let $V\simeq \C^n$ be a representation of $\Sp^1$ with weight vector
$\bs{a} = (-\alpha_1,\ldots,-\alpha_k,\alpha_{k+1},\ldots,\alpha_n)$ where each $\alpha_i > 0$.
Let $\Hilb_{\bs{a}}^{\Sp^1,\on}(s,t)$ denote the bivariate Hilbert series of the
on-shell invariants of the corresponding symplectic quotient. Then
\begin{multline}
\label{eq:S1SymplecticBivarDegen}
    \Hilb_{\bs{a}}^{\Sp^1,\on}(s,t) = \lim\limits_{(c_1,\ldots,c_n)\to\bs{a}}
    \left(\sum_{\substack{i=1 \\ \zeta^{\alpha_i} = 1}}^k  \frac{1}{ \alpha_i
        \prod\limits_{\substack{j=1 \\ j\neq i}}^n
            (1 - \zeta^{a_j} s^{(c_i - c_j)/c_i} )
            (1 - \zeta^{-a_j} s^{c_j/c_i} t) }
    \right.\\ \left.
    + \sum_{\substack{i=k+1 \\ \zeta^{\alpha_i} = 1}}^n \frac{1}{ \alpha_i
        \prod\limits_{\substack{j=1 \\ j\neq i}}^n
            (1 - \zeta^{a_j} s t^{c_j/c_i} )
            (1 - \zeta^{-a_j} t^{(c_i - c_j)/c_i} ) }\right).
\end{multline}
If the $a_i$ are distinct, then $\Hilb_{\bs{a}}^{\Sp^1,\on}(s,t)$ is given by
\begin{equation}
\label{eq:S1SymplecticBivarGeneric}
    \sum_{\substack{i=1 \\ \zeta^{\alpha_i} = 1}}^k  \frac{1}{ \alpha_i
        \prod\limits_{\substack{j=1 \\ j\neq i}}^n
            (1 - \zeta^{a_j} s^{(a_i - a_j)/a_i} )
            (1 - \zeta^{-a_j} s^{a_j/a_i} t) }
    + \sum_{\substack{i=k+1 \\ \zeta^{\alpha_i} = 1}}^n \frac{1}{ \alpha_i
        \prod\limits_{\substack{j=1 \\ j\neq i}}^n
            (1 - \zeta^{a_j} s t^{a_j/a_i} )
            (1 - \zeta^{-a_j} t^{(a_i - a_j)/a_i} ) }.
\end{equation}
\end{corollary}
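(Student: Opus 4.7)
The plan is to deduce the statement directly from Corollary~\ref{cor:S1CotanBivar} combined with the identity $\Hilb_{\bs{a}^c}^{\Sp^1,\on}(s,t) = (1-st)\Hilb_{\bs{a}^c;0}^{\Sp^1}(s,t)$ from \cite[Proposition~2.1]{HerbigSeaton}, which was recalled in the paragraph preceding the statement. The key observation is that the residue-at-zero term in Corollary~\ref{cor:S1CotanBivar} vanishes when $b = 0$, so that multiplying by $(1-st)$ kills precisely the $(1-st)^{-1}$ prefactor appearing in Equation~\eqref{eq:S1CotanBivarDegen}, leaving exactly the expression claimed.

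First, I specialize Equation~\eqref{eq:S1CotanBivarDegen} to $b = 0$. Under this substitution, every factor $\zeta^b$, $s^{b/\lvert c_i\rvert}$, and $t^{b/c_i}$ becomes $1$, so the two trigonometric sums over $i$ reduce exactly to the two sums displayed in Equation~\eqref{eq:S1SymplecticBivarDegen}, differing only by the overall $(1-st)^{-1}$ factor. Next, I verify that the set $\mathcal{S}_{\bs{a}^c,0}$ defined in Equation~\eqref{eq:S1CotanBivarS} is empty: a tuple $(y_i, y_i^\prime) \in \N^{2n}$ lies in $\mathcal{S}_{\bs{a}^c,0}$ if and only if $\sum_{i=1}^n (y_i + y_i^\prime)\alpha_i = -\sum_{i=1}^n \alpha_i$, but since $\alpha_i > 0$ and $y_i, y_i^\prime \geq 0$ the left side is nonnegative while the right side is strictly negative (recall the standing assumption that $V$ has no trivial factors). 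Hence the residue-at-zero term in Equation~\eqref{eq:S1CotanBivarDegen} drops out entirely.

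Finally, multiplying the remaining expression by $(1-st)$ per the on-shell/off-shell relation cancels the $(1-st)^{-1}$ prefactor and produces Equation~\eqref{eq:S1SymplecticBivarDegen}; in the generic case where the $\alpha_i$ are distinct, the limit is unnecessary by the same remark as in Corollary~\ref{cor:S1CotanBivar}, yielding Equation~\eqref{eq:S1SymplecticBivarGeneric}. There is no substantial obstacle here: the proof is a specialization-and-cancellation argument, with the only nontrivial verification being the emptiness of $\mathcal{S}_{\bs{a}^c,0}$, which itself is an immediate consequence of the positivity of the $\alpha_i$.
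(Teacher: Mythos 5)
Your proposal is correct and follows exactly the route the paper intends: the corollary is stated immediately after the identity $\Hilb_{\bs{a}^c}^{\Sp^1,\on}(s,t) = (1-st)\Hilb_{\bs{a}^c;\off}^{\Sp^1}(s,t)$ precisely so that it follows from Corollary~\ref{cor:S1CotanBivar} with $b=0$, where the $(1-st)^{-1}$ prefactor cancels and $\mathcal{S}_{\bs{a}^c,0}=\emptyset$ because $-\sum_{i=1}^n\alpha_i<0$ cannot be a sum of nonnegative multiples of the $\alpha_i$. Your verification of the emptiness of $\mathcal{S}_{\bs{a}^c,0}$ is the only nontrivial step and it is handled correctly.
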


Taking the limit as $s\to t$ and again relabeling $\zeta$ as $\zeta^{-1}$ for $1\leq i\leq k$,
we recover the univariate Hilbert series of \cite[Theorem~3.1]{HerbigSeaton}.


\subsection{Laurent coefficients of the univariate Hilbert series}
\label{subsec:S1Laurent}

Let $V = V_{\bs{a}}$ be a representation of $\Sp^1$ with $\bs{a}\in\Z^n$, let $W = W_{-b}$ be an irreducible
representation of $\Sp^1$, and let $\Hilb_{\bs{a};b}^{\Sp^1}(t)$ denote the univariate Hilbert series as computed
in Corollary~\ref{cor:S1Univar}. We consider the Laurent expansion at $t = 1$ of the form
\[
    \Hilb_{\bs{a};b}^{\Sp^1}(t)
    =   \sum\limits_{m=0}^\infty \gamma_m^{\Sp^1}(\bs{a};b) (1 - t)^{m - n + 1}.
\]
We will use the simplified notation $\gamma_m = \gamma_m^{\Sp^1}(\bs{a};b)$ when the representations are clear from the
context. Note that we index the $\gamma_m$ so that $\gamma_0$ is the coefficient of degree $1 - n$; the pole order
of $\Hilb_{\bs{a};b}^{\Sp^1}(t)$ is at most $n - 1$ and usually obtains this value, but it is possible that $\gamma_0 = 0$,
e.g., if all weights have the same sign.
Our goal in this section is to compute $\gamma_0$ and $\gamma_1$ in general as well as when $V$ is a cotangent lift.
When $b = 0$, these coefficients along with $\gamma_2$ and $\gamma_3$ were computed in \cite[Section~5]{HerbigSeaton}
and \cite[Section~6]{CowieHerbigSeatonHerden}; as we will see, the coefficients in the case of $b\neq 0$ are not
substantially different.

It is difficult to uniformly treat the cases where all weights have the same sign, so let us first consider these cases
separately; see Remark~\ref{rem:S1MaxSWeightsAllPos}.
If all weights have the same sign and $b = 0$, then $\Hilb_{\bs{a};b}^{\Sp^1}(t) = 1$. Hence $\gamma_{n-1} = 1$ and
all other $\gamma_m = 0$. If all weights are positive and $b > 0$, then $\Hilb_{\bs{a};b}^{\Sp^1}(t) = 0$, so all
$\gamma_m$ vanish. If all weights are positive and $b < 0$, then $\Hilb_{\bs{a};b}^{\Sp^1}(t)$ is a polynomial.
First assume $n = 1$. By Remark~\ref{rem:S1Faithful}, $\Hilb_{(a_1);b}^{\Sp^1}(t) = 0$ if the single weight $a_1$ does
not divide $b$ and otherwise $\Hilb_{(a_1);b}^{\Sp^1}(t) = \Hilb_{(1);b/a_1}^{\Sp^1}(t)=t^{-b/a_1}$. In the former case,
all $\gamma_m$ vanish, while in the latter case, $\gamma_0 = 1$ and $\gamma_1 = b/a_1$. If $n = 2$, then $\gamma_0 = 0$
and $\gamma_1$ is a nonnegative integer that counts the number of solutions to $y_1\alpha_1 + y_2\alpha_2 = -b$.
If $n > 2$, then $\gamma_0 = \gamma_1 = 0$. Cases with all weights negative can be reduced
to those above using the fact that $\Hilb_{\bs{a};b}^{\Sp^1}(t) = \Hilb_{-\bs{a};-b}^{\Sp^1}(t)$.

We now assume that $\bs{a}$ contains at least one weight of each sign so that $n \geq 2$. We assume without loss of
generality that $V$ is faithful so that $\gcd(a_1,\ldots,a_n) = 1$, see Remark~\ref{rem:S1Faithful}, and $b \geq 0$,
see Remark~\ref{rem:S1MaxSEmpty}.

By $\bs{sp}_u(\bs{a})$, we mean the partial Schur-Laurent polynomial, see Equation~\eqref{eq:DefPartialSchur},
where the first set of variables is the set of negative weights and the second is the set of positive weights.
We let $\bs{e}_1(\bs{a}) = \sum_{i=1}^n a_i$ denote the degree $1$ elementary symmetric polynomial in the weights.

\begin{theorem}[$\gamma_0$ and $\gamma_1$ for $\Sp^1$-covariants]
\label{thrm:S1Laurent}
Let $V\simeq \C^n$ be a faithful representation of $\Sp^1$ with weight vector
$\bs{a} = (-\alpha_1,\ldots,-\alpha_k,\alpha_{k+1},\ldots,\alpha_n)$ where each $\alpha_i > 0$,
$n\geq 2$, and $1 \le k < n$, and let $W\simeq \C$ be the irreducible representation of the circle with weight $-b$
where $b\geq 0$. Then the first two Laurent coefficients $\gamma_0^{\Sp^1}(\bs{a};b)$ and $\gamma_1^{\Sp^1}(\bs{a};b)$
of the Hilbert series $\Hilb_{\bs{a};b}^{\Sp^1}(t)$ of covariants $\HOM(V,W)^{\Sp^1} = (\C[V]\otimes W)^{\Sp^1}$ are
given by
\begin{align*}
    \gamma_0^{\Sp^1}(\bs{a};b)
    &=      \frac{ -\bs{sp}_{n-2}(\bs{a}) }{\prod\limits_{p=1}^k\prod\limits_{q=k+1}^n (a_p - a_q) }, \quad\text{and}
    \\
    \gamma_1^{\Sp^1}(\bs{a};b)
    &=      \frac{\big(\bs{e}_1(\bs{a}) - 2b\big)\bs{sp}_{n-3}(\bs{a}) - \bs{sp}_{n-2}(\bs{a})}
                {2\prod\limits_{p=1}^k\prod\limits_{q=k+1}^n (a_p - a_q)}
            + \sum\limits_{j=1}^n \left(\frac{2\big|[a_j]_{g_j}^{-1} b\big|_{g_j} - g_j - 1}{2}\right) \gamma_0^{\Sp^1}(\bs{a}_j;b),
\end{align*}
where $g_j = \gcd\{a_i : i\neq j\}$, $\bs{a}_j\in\Z^{n-1}$ is the weight vector formed by removing $a_j$ from
$\bs{a}$, $[r]_s^{-1}$ denotes the multiplicative inverse of $r$ mod $s$,
and $|r|_s$ is the representative of the equivalence class of $r$ mod $s$ such that $1\leq |r|_s \leq s$. When the negative weights in $\bs{a}$ are distinct,
the coefficients can be expressed as
\begin{align}
    \nonumber
    \gamma_0^{\Sp^1}(\bs{a};b)
    &=      \sum\limits_{i=1}^k \frac{-a_i^{n-2}}{\prod\limits_{\substack{j=1 \\ j\neq i}}^n (a_i - a_j) }, \quad\text{and}
    \\
    \label{eq:S1gamma1}
    \gamma_1^{\Sp^1}(\bs{a};b)
    &=      \sum\limits_{i=1}^k \frac{a_i^{n - 3}\left(-2b + \sum\limits_{\substack{j=1 \\ j\neq i}}^n a_j \right)}
                { 2\prod\limits_{\substack{\ell=1 \\ \ell\neq i}}^n (a_i - a_\ell) }
            + \sum\limits_{i=1}^k \sum\limits_{\substack{j=1 \\ j\neq i}}^n \left(\frac{2\big|[a_j]_{g_j}^{-1} b\big|_{g_j} - g_j - 1}{2}\right)
                \frac{-a_i^{n-3}}{ \prod\limits_{\substack{\ell=1 \\ \ell\neq i,j}}^n (a_i - a_\ell) }.
\end{align}
In particular, $\gamma_0^{\Sp^1}(\bs{a};b) = \gamma_0^{\Sp^1}(\bs{a};0)$,
where $\gamma_0^{\Sp^1}(\bs{a};0)$ was computed in \cite[Theorems~6.2 and 6.3]{CowieHerbigSeatonHerden}.
\end{theorem}

\begin{proof}
The hypotheses ensure that the second sum in Equation~\eqref{eq:S1Univar} is empty, so we consider the Laurent expansion
in the first sum. The maximal pole order of a term is $n - 1$ and occurs when $\zeta^{a_j} = 1$ for each
$j$; as $\gcd(a_1,\ldots,a_n) = 1$, this implies $\zeta = 1$. Such a term is of the form
\begin{equation}
\label{eq:S1UnivarLaurentTerm1}
    \frac{t^{b/\alpha_i}}{ \alpha_i
        \prod\limits_{\substack{\ell=1 \\ \ell\neq i}}^n (1 - t^{(c_i - c_\ell)/c_i}) },
\end{equation}
and because the Laurent expansion of $t^{b/\alpha_i}$ begins
\begin{equation}
\label{eq:S1UnivarLaurentSeriesNum}
    t^{b/\alpha_i}
    =   1 - \frac{b}{\alpha_i}(1 - t) + \frac{b(b - \alpha_i)}{2\alpha_i^2}(1 - t)^2 \pm \cdots,
\end{equation}
applying the Cauchy product formula, the degree $1 - n$ Laurent coefficient of the term in
Equation~\eqref{eq:S1UnivarLaurentTerm1} is that of
\[
    \frac{1}{ \alpha_i
        \prod\limits_{\substack{\ell=1 \\ \ell\neq i}}^n (1 - t^{(c_i - c_\ell)/c_i}) }.
\]
In particular, this implies $\gamma_0^{\Sp^1}(\bs{a};b) = \gamma_0^{\Sp^1}(\bs{a};0)$, where the latter was computed in
\cite[Theorem~6.2]{CowieHerbigSeatonHerden}.

The computation of $\gamma_1^{\Sp^1}(\bs{a};b)$ is similar to \cite[Theorem~6.3]{CowieHerbigSeatonHerden}.
We first consider the contribution of terms with $\zeta = 1$ of the form given
in Equation~\eqref{eq:S1UnivarLaurentTerm1}. For simplicity, assume the negative weights of $\bs{a}$ are distinct and
let $c_i = a_i$. Using the Cauchy product formula, the degree $2 - n$ Laurent coefficient of such a term is
\[
    \frac{ a_i^{n-3}\left( -2b +  \sum\limits_{\substack{j=1 \\j\neq i}}^n a_j \right)}
        {2\prod\limits_{\substack{\ell=1 \\ \ell\neq i}}^n (a_i - a_\ell)}.
\]
The only other contributions to $\gamma_1^{\Sp^1}(\bs{a};b)$ are from terms where
$\zeta\neq 1$ is an $\alpha_i$th root of unity such that $\zeta^{\alpha_\ell} = 1$ for all $\ell$ except for one, say
$\zeta^{\alpha_j} \neq 1$. Such a term is then of the form
\[
    \frac{\zeta^b t^{b/\alpha_i}}{ \alpha_i (1 - \zeta^{a_j} t^{(a_i - a_j)/a_i})
        \prod\limits_{\substack{\ell=1 \\ \ell\neq i,j}}^n (1 - t^{(a_i - a_\ell)/a_i}) }
\]
and occurs for each $g_j$th root of unity $\zeta \neq 1$. The degree $2-n$ coefficient of such a term is given by
\[
    \sum\limits_{\substack{\zeta^{g_j} = 1 \\ \zeta\neq 1}}
    \frac{\zeta^b a_i^{n-2}}
    {\alpha_i (1 - \zeta^{a_j}) \prod\limits_{\substack{\ell=1 \\ \ell\neq i,j}}^n (a_i - a_\ell)}
    =
    \frac{- a_i^{n-3}}
    {\prod\limits_{\substack{\ell=1 \\ \ell\neq i,j}}^n (a_i - a_\ell)}
    \sum\limits_{\substack{\zeta^{g_j} = 1 \\ \zeta\neq 1}} \frac{\zeta^b}{1 - \zeta^{a_j}}.
\]
Noting that $g_j$ and $a_j$ are relatively prime by construction, we have
\[
    \sum\limits_{\substack{\zeta^{g_j} = 1 \\ \zeta\neq 1}} \frac{\zeta^b}{1 - \zeta^{a_j}}
    =
    \sum\limits_{\substack{\zeta^{g_j} = 1 \\ \zeta\neq 1}} \frac{\zeta^{[a_j]_{g_j}^{-1} b}}{1 - \zeta}
    =
    \frac{2\big|[a_j]_{g_j}^{-1} b\big|_{g_j} - g_j - 1}{2},
\]
where the last equation follows from \cite[Corollary~3.2]{Gessel}.
Hence the resulting contribution to $\gamma_1^{\Sp^1}(\bs{a}; b)$ is
\[
    \frac{- a_i^{n-3}}
    {\prod\limits_{\substack{\ell=1 \\ \ell\neq i,j}}^n (a_i - a_\ell)}
    \left(\frac{2\big|[a_j]_{g_j}^{-1} b\big|_{g_j} - g_j - 1}{2}\right).
\]
Summing over $i$ and $j$ yields Equation~\eqref{eq:S1gamma1}.

To express $\gamma_1^{\Sp^1}(\bs{a}; b)$ in terms of partial Schur-Laurent polynomials, first note that
\[
    \sum\limits_{i=1}^k \frac{a_i^{u}}
        {\prod\limits_{\substack{\ell=1 \\ \ell\neq i}}^n (a_i - a_\ell)}
    =
    \frac{\bs{sp}_u(\bs{a})}{\prod\limits_{p=1}^k \prod\limits_{q=k+1}^n (a_p - a_q)},
\]
see \cite[Equation~(5.2)]{CowieHerbigSeatonHerden}. We compute
\begin{align*}
    &\sum\limits_{i=1}^k
    \frac{ a_i^{n-3}\left( -2b +  \sum\limits_{\substack{j=1 \\ j\neq i}}^n a_j \right)}
        {2\prod\limits_{\substack{\ell=1 \\ \ell\neq i}}^n (a_i - a_\ell)}
    +
    \sum\limits_{i=1}^k \sum\limits_{\substack{ j=1 \\ j\neq i}}^n
    \left(\frac{2\big|[a_j]_{g_j}^{-1} b\big|_{g_j} - g_j - 1}{2}\right)
    \frac{- a_i^{n-3}}
        {\prod\limits_{\substack{\ell=1 \\ \ell\neq i,j}}^n (a_i - a_\ell)}
    \\&=
    \sum\limits_{i=1}^k
    \frac{ -2b a_i^{n-3} - a_i^{n-2} + a_i^{n-3} \bs{e}_1(\bs{a}) }
        {2\prod\limits_{\substack{\ell=1 \\ \ell\neq i}}^n (a_i - a_\ell)}
    +
    \sum\limits_{j=1}^n
    \left(\frac{2\big|[a_j]_{g_j}^{-1} b\big|_{g_j} - g_j - 1}{2}\right)
    \gamma_0^{\Sp^1}(\bs{a}_j; b)
    \\&=
    \sum\limits_{i=1}^k
    \frac{ \Big(\bs{e}_1(\bs{a}) - 2b\Big) \bs{sp}_{n-3}(\bs{a}) - \bs{sp}_{n-2}(\bs{a}) }
        {2\prod\limits_{p=1}^k \prod\limits_{q=k+1}^n (a_p - a_q)}
    +
    \sum\limits_{j=1}^n
    \left(\frac{2\big|[a_j]_{g_j}^{-1} b\big|_{g_j} - g_j - 1}{2}\right)
    \gamma_0^{\Sp^1}(\bs{a}_j; b).
    \qedhere
\end{align*}
\end{proof}

We now consider the case of a cotangent-lifted representation and compute $\gamma_0^{\Sp^1}(\bs{a}^c;b)$
and $\gamma_1^{\Sp^1}(\bs{a}^c;b)$ where $\gamma_0^{\Sp^1}(\bs{a}^c;b)$ occurs in degree $2n - 1$.
In this case, we do not need to restrict $n$ nor the signs of the weights. We continue to assume without
loss of generality that $b\geq 0$ by Remark~\ref{rem:S1MaxSEmpty}.

\begin{theorem}[$\gamma_0$ and $\gamma_1$ for $\Sp^1$-covariants of a cotangent-lifted representation]
\label{thrm:S1LaurentCotan}
Let $V\simeq \C^n$ be a faithful representation of $\Sp^1$ with weight vector
$\bs{a} = (-\alpha_1,\ldots,-\alpha_k,\alpha_{k+1},\ldots,\alpha_n)$ where each $\alpha_i > 0$,
let $\bs{a}^c = (\bs{a}, -\bs{a})$ denote the weight vector of the cotangent lift of $V$,
let $\bs{\alpha} = (\alpha_1,\ldots,\alpha_n)$, and let $W\simeq \C$ be the irreducible representation
of the circle with weight $-b$ where $b\geq 0$. Then the first two Laurent coefficients $\gamma_0^{\Sp^1}(\bs{a}^c;b)$
and $\gamma_1^{\Sp^1}(\bs{a}^c;b)$ of the Hilbert series $\Hilb_{\bs{a}^c;b}^{\Sp^1}(t)$ of covariants
$\HOM(V\oplus V^\ast,W)^{\Sp^1} = (\C[V\oplus V^\ast]\otimes W)^{\Sp^1}$ are given by
\begin{align}
    \label{eq:S1LaurentCotanGamma0}
    \gamma_0^{\Sp^1}(\bs{a}^c;b)
    &=      \frac{ \bs{s}_{(n-2,n-2,n-3,\ldots,1,0)}(\bs{\alpha}) }
                { 2\bs{s}_{(n-1,n-2,n-3,\ldots,1,0)}(\bs{\alpha})}, \quad\text{and}
    \\
    \label{eq:S1LaurentCotanGamma1}
    \gamma_1^{\Sp^1}(\bs{a}^c;b)
    &=      \frac{\gamma_0^{\Sp^1}(\bs{a}^c;b)}{2}.
\end{align}
When the $\alpha_i$ are distinct, $\gamma_0^{\Sp^1}(\bs{a}^c;b)$ can be expressed as
\[
    \gamma_0^{\Sp^1}(\bs{a}^c;b)
    =      \sum\limits_{i=1}^n \frac{\alpha_i^{2n-3}}
                {2\prod\limits_{\substack{j=1 \\ j\neq i}}^n (\alpha_i^2 - \alpha_j^2) }.
\]
In particular, $\gamma_m^{\Sp^1}(\bs{a}^c;b) = \gamma_m^{\Sp^1}(\bs{a}^c;0)$ for $m=1,2$,
where the $\gamma_m^{\Sp^1}(\bs{a}^c;0)$ were computed in \cite[Theorem~5.1 and Remark~5.3]{HerbigSeaton}.
\end{theorem}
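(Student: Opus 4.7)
The plan is to extract the two leading Laurent coefficients at $t=1$ directly from the expression for $\Hilb_{\bs{a}^c;b}^{\Sp^1}(t)$ in Corollary~\ref{cor:S1CotanUnivar}, exploiting a cancellation specific to the cotangent bigrading, and to close with a Lagrange interpolation identity together with a Schur polynomial identification. Since $b\geq 0$ and each $\alpha_i>0$, the set $\mathcal{S}_{\bs{a}^c,b}$ is empty, so only the first sum in Equation~\eqref{eq:S1CotanUnivarDegen} contributes. For fixed $i$ and $\zeta$ with $\zeta^{\alpha_i}=1$, each pair of factors $(1-\zeta^{-a_j}t^{(c_i-c_j)/c_i})(1-\zeta^{a_j}t^{(c_i+c_j)/c_i})$ either contributes two zeros at $t=1$ (when $\zeta^{a_j}=1$) or none, so the pole order of the $(i,\zeta)$ term, including the global $1/(1-t^2)$, equals $1+2\lvert\{j\neq i:\zeta^{a_j}=1\}\rvert$. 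For $\zeta=1$ this equals $2n-1$; for $\zeta\neq 1$ the faithfulness assumption $\gcd(a_1,\ldots,a_n)=1$ forces at least one $j$ with $\zeta^{a_j}\neq 1$, dropping the pole order to at most $2n-3$. Hence only $\zeta=1$ terms contribute to $\gamma_0$ and $\gamma_1$.

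Setting $s:=1-t$, I expand each $\zeta=1$ term via $(1-t^e)^{-1}=\tfrac{1}{es}\bigl(1+\tfrac{e-1}{2}s+O(s^2)\bigr)$. In the limit $c_i\to a_i$, the two factors in the pair indexed by $j\neq i$ become $(1-t^{1-\alpha_j/\alpha_i})$ and $(1-t^{1+\alpha_j/\alpha_i})$ (in some order, depending on signs of $a_i$ and $a_j$). The crucial cancellation is that their linear-in-$s$ corrections after reciprocation, $\tfrac{-\alpha_j/\alpha_i}{2}s$ and $\tfrac{\alpha_j/\alpha_i}{2}s$, sum to zero. Consequently the residual linear correction comes only from $(1-t^2)^{-1}=\tfrac{1}{2s}(1+s/2+O(s^2))$ and $t^{b/\alpha_i}=1-(b/\alpha_i)s+O(s^2)$, yielding $\tfrac{1}{2}-\tfrac{b}{\alpha_i}$. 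Denoting by $\rho_i:=1/\bigl(2\alpha_i\prod_{j\neq i}(1-(\alpha_j/\alpha_i)^2)\bigr)$ the leading coefficient of the $i$th term, I obtain $\gamma_0=\sum_i\rho_i$ and $\gamma_1=\tfrac{1}{2}\gamma_0-b\sum_i\rho_i/\alpha_i$.

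The substitution $u_i:=\alpha_i^2$ recasts $\sum_i\rho_i/\alpha_i$ as a constant multiple of $\sum_i u_i^{n-2}/\prod_{j\neq i}(u_i-u_j)$, which vanishes by the standard Lagrange interpolation identity $\sum_i u_i^k/\prod_{j\neq i}(u_i-u_j)=0$ for $0\leq k\leq n-2$, proving Equation~\eqref{eq:S1LaurentCotanGamma1}. For the Schur form in Equation~\eqref{eq:S1LaurentCotanGamma0}, the bialternant identity $\Alt_{2\delta_n}(\bs{\alpha})=\Vand(\alpha_1^2,\ldots,\alpha_n^2)=\Vand(\bs{\alpha})\prod_{i<j}(\alpha_i+\alpha_j)$ gives $\bs{s}_{\delta_n}(\bs{\alpha})=\prod_{i<j}(\alpha_i+\alpha_j)$; expanding the alternant $\Alt_{\delta_n+(\delta_n-e_1)}(\bs{\alpha})$ along its top row (with entries $\alpha_j^{2n-3}$) and identifying the resulting $(n-1)\times(n-1)$ minors as Vandermonde determinants in the $u_j$ then yields $2\gamma_0=\bs{s}_{\delta_n-e_1}(\bs{\alpha})/\bs{s}_{\delta_n}(\bs{\alpha})$, which is the asserted ratio. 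The non-distinct case follows from the analytic continuation argument of \cite[Section~3.3]{HerbigHerdenSeatonT2} applied to generic parameters $c_i\to a_i$. The main obstacle is the careful bookkeeping in the pair-cancellation step; the vanishing of the $b$-dependent contribution via Lagrange interpolation is structurally why $\gamma_1$ turns out to be independent of $b$.
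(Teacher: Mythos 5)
Your proof is correct and follows essentially the same route as the paper's: reduction to the $\zeta=1$ terms via the pole-order/faithfulness argument, the pairwise cancellation of the linear corrections coming from the factor pairs $(1-t^{(c_i-c_j)/c_i})(1-t^{(c_i+c_j)/c_i})$ (which the paper records as the vanishing numerator $c_ic_j-c_ic_j$), and the vanishing of the $b$-dependent contribution via the Lagrange interpolation identity, which is exactly the paper's cofactor expansion of an alternant with a repeated row. The only differences are cosmetic: you derive the Schur-quotient form of $\gamma_0$ from the bialternant identity rather than citing \cite[Section~5.2]{HerbigSeaton}, and, just as in the paper's argument, the vanishing of $\sum_i u_i^{n-2}/\prod_{j\neq i}(u_i-u_j)$ implicitly requires $n\geq 2$.
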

\begin{proof}
We consider the Laurent expansion of Equation~\eqref{eq:S1CotanUnivarDegen}, where our hypotheses imply that
the second sum vanishes.
The maximum pole order of a term
\[
        \frac{\zeta^{a_i b/\alpha_i} t^{b/\alpha_i}}{ (1 - t^2) \alpha_i
        \prod\limits_{\substack{j=1 \\ j\neq i}}^n
            (1 - \zeta^{-a_j} t^{(c_i - c_j)/c_i} )
            (1 - \zeta^{a_j} t^{(c_i + c_j)/c_i} ) }.
\]
is $2n - 1$ and occurs when each $\zeta^{a_j} = 1$; this implies by the faithfulness of $V$ that $\zeta = 1$. Note that
if $\zeta\neq 1$, then the pole order is at most $2n - 3$, so the only terms that contribute to $\gamma_0$ and $\gamma_1$
are of the form
\begin{equation}
\label{eq:S1LaurentCotanTerm}
        \frac{t^{b/ \alpha_i}}{ (1 - t^2) \alpha_i
        \prod\limits_{\substack{j=1 \\ j\neq i}}^n
            (1 - t^{(c_i - c_j)/c_i} )
            (1 - t^{(c_i + c_j)/c_i} ) }.
\end{equation}
Applying the Cauchy product formula to such a term and summing over $i$, we obtain
\begin{align*}
    \gamma_0^{\Sp^1}(\bs{a}^c;b)
    =       \lim\limits_{(c_1,\ldots,c_n)\to\bs{a}}
            \sum\limits_{i=1}^n \frac{c_i^{2n-2}}
                { 2\alpha_i \prod\limits_{\substack{j = 1 \\ j \neq i}}^n
                    (c_i^2 - c_j^2)}
    =       \lim\limits_{(c_1,\ldots,c_n)\to\bs{a}}
            \sum\limits_{i=1}^n \frac{\lvert c_i\rvert^{2n-3}}
                { 2\prod\limits_{\substack{j = 1 \\ j \neq i}}^n
                    (c_i^2 - c_j^2)},
\end{align*}
which is shown to be equal to the expression in Equation~\eqref{eq:S1LaurentCotanGamma0} in
\cite[Section~5.2]{HerbigSeaton}.

To compute $\gamma_1^{\Sp^1}(\bs{a}^c;b)$, we apply the Cauchy product formula to the term in
Equation~\eqref{eq:S1LaurentCotanTerm} for each $i$, yielding
\begin{multline*}
 \gamma_1^{\Sp^1}(\bs{a}^c;b)=   \lim\limits_{(c_1,\ldots,c_n)\to\bs{a}}
    \sum\limits_{i=1}^n \left[ \sum\limits_{\substack{j=1 \\ j\neq i}}^n \frac{c_i^{2n-4}}
        {2\alpha_i \prod\limits_{\substack{\ell = 1 \\ \ell \neq i,j}}^n
            (c_i^2 - c_\ell^2)}
        \left( \frac{c_i c_j - c_i c_j}{2(c_i^2 - c_j^2) }
        \right)
        + \frac{c_i^{2n-2}}
                { 4\alpha_i\prod\limits_{\substack{j = 1 \\ j \neq i}}^n
                    (c_i^2 - c_j^2)}
        - \frac{b c_i^{2n-2}}
                {2\alpha_i^2 \prod\limits_{\substack{j = 1 \\ j \neq i}}^n
                    (c_i^2 - c_j^2)}
        \right]
    \\
    = \lim\limits_{(c_1,\ldots,c_n)\to\bs{a}}
    \left[ \sum\limits_{i=1}^n \frac{\lvert c_i\rvert^{2n-3}}
                { 4\prod\limits_{\substack{j = 1 \\ j \neq i}}^n
                    (c_i^2 - c_j^2)}
        - \frac b2 \sum\limits_{i=1}^n \frac{c_i^{2n-4}}
                {\prod\limits_{\substack{j = 1 \\ j \neq i}}^n
                    (c_i^2 - c_j^2)}\right].
\end{multline*}
The first of the resulting sums is equal to $\gamma_0^{\Sp^1}(\bs{a}^c;b)/2$. To see that the second sum
vanishes, rewrite
\[
    \sum\limits_{i=1}^n \frac{ c_i^{2n-4}}
        {\prod\limits_{\substack{j = 1 \\ j \neq i}}^n
            (c_i^2 - c_j^2)}
    =
    \frac{ \sum\limits_{i=1}^n (-1)^{i+1} c_i^{2n-4}
        \prod\limits_{\substack{1\leq j < k \leq n \\ j,k\neq i}} (c_j^2 - c_k^2)}
        { \prod\limits_{1\leq j < k \leq n} (c_j^2 - c_k^2)}
\]
and observe that the numerator is the cofactor expansion along the first row of a matrix whose first two rows
are identical, both listing powers $c_i^{2n-4}$.
\end{proof}


\subsection{Laurent coefficients of multigraded Hilbert series}
\label{subsec:S1LaurentMultigrad}

Using the approach of Section~\ref{subsec:S1Laurent}, we can also consider the Laurent coefficients of the multigraded
Hilbert series. There are several approaches to Laurent expansions of mutivariate functions, see
\cite{AparicioMonforteKauers} for a careful exposition, and the meaning of the coefficients is in this context less clear.
Hence, we give some sample calculations of the first few iterated Laurent series coefficients of
$\Hilb_{\bs{a}}^{\Sp^1,\on}(s,t)$ computed in Corollary~\ref{cor:S1SymplecticBivar}.
For simplicity, we assume that $n \geq 3$ and the $a_i$ are distinct and hence consider Equation~\eqref{eq:S1SymplecticBivarGeneric};
we furthermore assume that the weights are pairwise relatively prime, i.e., $\gcd(\alpha_i,\alpha_j) = 1$ for $i\neq j$,
which in particular implies that $V$ is faithful. We let $\gamma_{i,j}^{\Sp^1, \on; s,t}(\bs{a})$ denote the $j$th coefficient of the expansion at $t = 1$
of the $i$th coefficient of the expansion at $s = 1$, where $i = 0$ and $j = 0$ indicate the first nonzero coefficient of
the corresponding expansion.
Note that $\Hilb_{\bs{a}}^{\Sp^1,\on}(s,t) = \Hilb_{\bs{a}}^{\Sp^1,\on}(t,s)$ by Remark \ref{rem:S1BivarSEmpty} so that in this instance,
$\gamma_{i,j}^{\Sp^1, \on; s,t}(\bs{a}) =\gamma_{i,j}^{\Sp^1, \on; t,s}(\bs{a})$ for all $i,j$. However, changing the expansion order
yields alternate formulas for these coefficients.

We first expand in $s = 1$ and then $t = 1$. Choose $i \leq k$, and hence consider a term in the first sum of
Equation~\eqref{eq:S1SymplecticBivarGeneric}. Such a term has a pole of order at most $n - 1$, with this pole
order obtained if and only if $\zeta^{a_j} = 1$ for each $j$. As $V$ is faithful, this implies that $\zeta = 1$.
Note that if $\zeta\neq 1$, then there is no pole at $s = 1$ so that, as $n \geq 3$, the corresponding term will
not contribute to the first two Laurent coefficients at $s = 1$.

Hence we consider a term of the form
\[
    \frac{1}{ \alpha_i
        \prod\limits_{\substack{j=1 \\ j\neq i}}^n
            (1 - s^{(a_i - a_j)/a_i} )
            (1 - s^{a_j/a_i} t) }.
\]
By the Cauchy product formula, the expansion at $s = 1$ of such a term begins
\begin{align*}
    &\frac{a_i^{n-1}}
        { \alpha_i (1 - t)^{n - 1} \prod\limits_{\substack{j=1 \\ j\neq i}}^n (a_i - a_j)}
    (1 - s)^{1 - n}
    \\&\qquad +
    \left(
        \sum\limits_{\substack{j=1\\ j\neq i}}^n \frac{ - a_i^{n - 2}a_j}
            { 2 \alpha_i (a_i - a_j) (1 - t)^{n-1} \prod\limits_{\substack{\ell=1 \\ \ell\neq i,j}}^n (a_i - a_\ell)}
        +
        \sum\limits_{\substack{j=1\\ j\neq i}}^n \frac{ - a_i^{n-2} a_j t}
            {\alpha_i (1 - t)^{n} \prod\limits_{\substack{\ell=1 \\ \ell\neq i}}^n (a_i - a_\ell) }
    \right)(1 - s)^{2 - n}+\cdots
    \\&=
    \frac{ - a_i^{n-2}}
        { (1 - t)^{n - 1} \prod\limits_{\substack{j=1 \\ j\neq i}}^n (a_i - a_j)}
    (1 - s)^{1 - n}
    +
    \sum\limits_{\substack{j=1\\ j\neq i}}^n \frac{ a_i^{n - 3}a_j(1 + t) }
        { 2 (1 - t)^n \prod\limits_{\substack{\ell=1 \\ \ell\neq i}}^n (a_i - a_\ell)}
    (1 - s)^{2 - n}+\cdots.
\end{align*}
Summing over the corresponding $i$ and expanding at $t = 1$ yields for the first term
\[
    (1 - s)^{1 - n}(1 - t)^{1 - n}
    \sum\limits_{i=1}^k \frac{ - a_i^{n-2}}
        { \prod\limits_{\substack{j=1 \\ j\neq i}}^n (a_i - a_j)},
\]
and for the second term
\[
    (1 - s)^{2 - n} (1 - t)^{-n}
    \sum\limits_{i=1}^k
    \sum\limits_{\substack{j=1\\ j\neq i}}^n \frac{ a_i^{n - 3}a_j }
        {\prod\limits_{\substack{\ell=1 \\ \ell\neq i}}^n (a_i - a_\ell)}
    +
    (1 - s)^{2 - n} (1 - t)^{1-n}
    \sum\limits_{i=1}^k
    \sum\limits_{\substack{j=1\\ j\neq i}}^n \frac{ - a_i^{n - 3}a_j }
        { 2 \prod\limits_{\substack{\ell=1 \\ \ell\neq i}}^n (a_i - a_\ell)}.
\]
When $i > k$, the corresponding term is of the form
\[
    \frac{1}{ \alpha_i
        \prod\limits_{\substack{j=1 \\ j\neq i}}^n
            (1 - \zeta^{a_j} s t^{a_j/a_i} )
            (1 - \zeta^{-a_j} t^{(a_i - a_j)/a_i} ) }.
\]
Such a term does not have a pole at $s = 1$ regardless of the value of $\zeta$ so, as $n\geq 3$, will not contribute to the first two Laurent coefficients at $s = 1$.
Hence, we have
\[
    \gamma_{0,0}^{\Sp^1, \on; s,t}(\bs{a})
        =   \sum\limits_{i=1}^k \frac{ - a_i^{n-2}}
                { \prod\limits_{\substack{j=1 \\ j\neq i}}^n (a_i - a_j)},
\]
which is equal to $\gamma_0^{\Sp^1}(\bs{a};0)$; see Theorem \ref{thrm:S1Laurent}. Similarly,
\[
    \gamma_{1,0}^{\Sp^1, \on; s,t}(\bs{a})
    =   \sum\limits_{i=1}^k
        \sum\limits_{\substack{j=1\\ j\neq i}}^n \frac{ a_i^{n - 3}a_j }
            {\prod\limits_{\substack{\ell=1 \\ \ell\neq i}}^n (a_i - a_\ell)}
    \quad\text{ and }\quad
    \gamma_{1,1}^{\Sp^1, \on; s,t}(\bs{a})
    =   \sum\limits_{i=1}^k
        \sum\limits_{\substack{j=1\\ j\neq i}}^n \frac{ - a_i^{n - 3}a_j }
            { 2 \prod\limits_{\substack{\ell=1 \\ \ell\neq i}}^n (a_i - a_\ell)}.
\]

Expanding at $t = 1$ and then $s = 1$ is similar.
If $i \leq k$, then we have a term of the form
\[
    \frac{1}{ \alpha_i
        \prod\limits_{\substack{j=1 \\ j\neq i}}^n
            (1 - \zeta^{a_j} s^{(a_i - a_j)/a_i} )
            (1 - \zeta^{-a_j} s^{a_j/a_i} t) }
\]
which does not have a pole at $t=1$ and hence, as $n\geq 3$, will not contribute to the first two Laurent coefficients at $t=1$.
If $i \geq k + 1$, we again have a pole at $t = 1$ only if $\zeta = 1$, in which case we have a term of the form
\[
    \frac{1}{ \alpha_i
        \prod\limits_{\substack{j=1 \\ j\neq i}}^n
            (1 - s t^{a_j/a_i} )
            (1 - t^{(a_i - a_j)/a_i} ) }.
\]
This term has a pole of order $n - 1$ at $t = 1$, and the expansion at $t = 1$ begins
\begin{align*}
    &\frac{ a_i^{n-1} }
        {\alpha_i (1 - s)^{n-1} \prod\limits_{\substack{j=1 \\ j\neq i}}^n (a_i - a_j)}
        (1 - t)^{1-n}
    \\& \qquad + \left(
    \sum\limits_{\substack{j=1 \\ j\neq i}}^n
        \frac{ - a_i^{n-2} a_j}
        {2 \alpha_i (1 - s)^{n-1} (a_i - a_j) \prod\limits_{\substack{\ell=1 \\ \ell \neq i,j }}^n (a_i - a_\ell) }
    + \sum\limits_{\substack{j=1 \\ j\neq i}}^n
        \frac{ - a_i^{n-2} a_j s}
        {\alpha_i (1 - s)^{n} \prod\limits_{\substack{\ell=1 \\ \ell \neq i }}^n (a_i - a_\ell)}
    \right)(1 - t)^{2 - n}+\cdots
    \\& =
    \frac{ a_i^{n-2} }
        {(1 - s)^{n-1} \prod\limits_{\substack{j=1 \\ j\neq i}}^n (a_i - a_j)}
        (1 - t)^{1-n}
    +
    \sum\limits_{\substack{j=1 \\ j\neq i}}^n
        \frac{ - a_i^{n-3} a_j (1 + s)}
        {2 (1 - s)^n \prod\limits_{\substack{\ell=1 \\ \ell \neq i }}^n (a_i - a_\ell) }
    (1 - t)^{2 - n}+\cdots.
\end{align*}
Summing over $i \geq k + 1$ and expanding at $s = 1$, we obtain
\[
    \gamma_{0,0}^{\Sp^1, \on; t,s}(\bs{a})
    =   \sum\limits_{i=k+1}^n
        \frac{ a_i^{n-2} }
            { \prod\limits_{\substack{j=1 \\ j\neq i}}^n (a_i - a_j) },
\]
\[
    \gamma_{1,0}^{\Sp^1, \on; t,s}(\bs{a})
    =   \sum\limits_{i=k+1}^n
        \sum\limits_{\substack{j=1 \\ j\neq i}}^n
        \frac{ - a_i^{n-3} a_j}
            {\prod\limits_{\substack{\ell=1 \\ \ell \neq i }}^n (a_i - a_\ell) },
    \quad\text{ and }\quad
    \gamma_{1,1}^{\Sp^1, \on; t,s}(\bs{a})
    =   \sum\limits_{i=k+1}^n
        \sum\limits_{\substack{j=1 \\ j\neq i}}^n
        \frac{ a_i^{n-3} a_j}
            {2 \prod\limits_{\substack{\ell=1 \\ \ell \neq i }}^n (a_i - a_\ell) }.
\]



\section{Invariants and covariants of $\OO_2(\R)$}
\label{sec:O2}

In this section, we consider the Hilbert series of invariants and covariants of representations of $\OO_2(\R)$.
We consider $\OO_2(\R)$ as the set of $2\times 2$ real matrices of the form
\[
    r_\theta    =   \begin{pmatrix} \cos\theta & -\sin\theta \\ \sin\theta & \cos\theta \end{pmatrix}
    \quad\text{and}\quad
    s_\theta    =    \begin{pmatrix} \cos \theta & \sin \theta \\ \sin \theta & -\cos \theta \end{pmatrix}.
\]


\subsection{The irreducible representations of $\OO_2(\R)$}
\label{subsec:O2Irreps}

Here, we briefly recall the representation theory of $\OO_2(\R)$ and refer the reader to
\cite[Theorem~7.2.1]{PalmThesis} or \cite[Section~11.2]{Knightly} for more details.

For each $a\in \Z$, let $\tau_a\co\OO_2(\R)\to\U_2$ denote the representation given by
\begin{align*}
    \tau_a  \co&
        \begin{pmatrix} \cos\theta & -\sin\theta \\ \sin\theta & \cos\theta \end{pmatrix}
        \mapsto
        \begin{pmatrix} e^{a\theta\sqrt{-1}} & 0 \\ 0 & e^{-a\theta\sqrt{-1}} \end{pmatrix}
    \\
        &\begin{pmatrix} \cos \theta & \sin \theta \\ \sin \theta & -\cos \theta \end{pmatrix}
        \mapsto
        \begin{pmatrix} 0 & e^{a\theta\sqrt{-1}} \\ e^{-a\theta\sqrt{-1}} & 0 \end{pmatrix}
\end{align*}
Identifying $\Sp^1$ with $\SO_2(\R)\leq\OO_2(\R)$ as above $\tau_a$ is the representation of $\OO_2(\R)$
induced by $\epsilon_a$. The representation $\tau_0$ splits into a $1$-dimensional trivial representation
and the $1$-dimensional representation $\det$, and for $a > 0$, $\tau_a$ is equivalent to $\tau_{-a}$.
Hence, every non-trivial finite-dimensional irreducible unitary representation of $\OO_2(\R)$ is isomorphic to
$\det$ or $\tau_a$ for an integer $a > 0$. Note that $\det$ has kernel $\SO_2(\R)$, and the kernel of $\tau_a$ for
$a \geq 1$ is the set of $a$th roots of unity in $\Sp^1\simeq\SO_2(\R)$. In the latter case, the quotient of
$\OO_2(\R)$ by the finite group of $a$th roots of unity is isomorphic to $\OO_2(\R)$, and the resulting
faithful representation is $\tau_1$.


\subsection{The Hilbert series of covariants of $\OO_2$}
\label{subsec:O2Hilb}

Let $V$ be a finite-dimensional unitary representation of $\OO_2(\R)$. As in Remark \ref{rem:S1Faithful}, we assume that $V$
has no trivial subrepresentations. Then there are integers $\alpha_1,\ldots,\alpha_n,d$ with $n \geq 0$ and each
$\alpha_i > 0$ such that $V$ is of the form
\[
    V   =   \left(\bigoplus\limits_{i=1}^n \tau_{\alpha_i}\right)\oplus d\det.
\]
We will refer to this representation as $V_{\bs{\alpha},d}$ where $\bs{\alpha} = (\alpha_1,\ldots,\alpha_n)$. Note that
the corresponding weight vector for the action of the torus $\SO_2(\R)\leq\OO_2(\R)$ is $\bs{\alpha}^c$ concatenated with
$d$ zero entries. If $n = 0$, then $V$ is simply copies of $\det$ and $\SO_2(\R)$ acts trivially. If $n\ge 1$,
then the kernel of the action is the set of $\gcd(\alpha_1,\ldots,\alpha_n)$th roots of unity. As in Remark \ref{rem:S1Faithful},
we will assume that $V$ is faithful, i.e., $n\ge 1$ and $\gcd(\alpha_1,\ldots,\alpha_n)=1$.

\begin{theorem}[Maximally graded Hilbert series of $\OO_2$-covariants]
\label{thrm:O2Max}
Let $\bs{\alpha} = (\alpha_1,\ldots,\alpha_n)$ where $n\geq 1$, each $\alpha_i > 0$, and $d\geq 0$; let
$V_{\bs{\alpha},d}\simeq \C^{2n+d}$ be the corresponding faithful representation of $\OO_2(\R)$;
and let $W$ be an irreducible representation of $\OO_2(\R)$ with character $\chi = \chi_W$.
Then either $W = \tau_\beta$ for some $\beta>0$ or $W$ is the trivial representation or $\det$, where in the latter two cases we set $\beta = 0$.
Let $\bs{t} = (t_1,\ldots,t_{n+d})$, and let
$\Hilb_{\bs{\alpha},d;W}^{\OO_2}(\bs{t})$ denote the Hilbert series for the maximal $\N^n$-grading of the
covariants $\HOM(V,W)^{\OO_2(\R)} = (\C[V]\otimes W)^{\OO_2(\R)}$. Then
\begin{multline}
\label{eq:O2Max}
    \Hilb_{\bs{\alpha},d;W}^{\OO_2}(\bs{t})
    =
    \frac{C_1}{2 \prod\limits_{j=1}^d (1 - t_{n+j})}
     \sum_{\substack{i=1 \\ \zeta^{\alpha_i} = 1}}^n \frac{\zeta^\beta t_i^{\beta/\alpha_i}}{ \alpha_i
        (1 -  t_i^2) \prod\limits_{\substack{j=1 \\ j\neq i}}^n
            (1 - \zeta^{-\alpha_j} t_j t_i^{- \alpha_j/\alpha_i})
            (1 - \zeta^{\alpha_j} t_j t_i^{\alpha_j/\alpha_i} ) }
    \\
    + \frac{C_2}{2 \prod\limits_{j=1}^n (1 - t_j^2) \prod\limits_{j=1}^d (1 + t_{n+j})},
\end{multline}
where
\[
    C_1 =   \begin{cases}
                1,  &  \text{if } W = \det \text{ or the trivial representation},
                \\
                2,  &  \text{if } W = \tau_\beta \text{ for } \beta > 0,
            \end{cases}
\]
and
\[
    C_2 =   \begin{cases}
                -1, &  \text{if } W = \det,
                \\
                0,  &  \text{if } W = \tau_\beta \text{ for } \beta > 0,
                \\
                1,  &  \text{if } W \text{ is the trivial representation}.
            \end{cases}
\]
\end{theorem}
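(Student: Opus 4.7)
The plan is to apply the Molien--Weyl formula (Theorem~\ref{thrm:MolienWeyl}) together with the decomposition $\OO_2(\R) = \SO_2(\R) \sqcup s_0\SO_2(\R)$ into its two connected components. With normalized Haar measure, the Hilbert series splits as
\[
    \Hilb_{\bs{\alpha},d;W}^{\OO_2}(\bs{t})
    = \tfrac{1}{2}I_{\SO_2}(\bs{t}) + \tfrac{1}{2}I_{s_0\SO_2}(\bs{t}),
\]
where each summand is the Molien--Weyl integrand averaged over one coset.

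I would handle $I_{s_0\SO_2}$ first, as it is essentially trivial. On the non-identity coset, each $\tau_{\alpha_i}$-summand acts by an off-diagonal $2\times 2$ matrix with eigenvalues $\pm 1$, and each $\det$-summand acts as multiplication by $-1$. Hence $\det_V(1 - s_\theta) = \prod_{i=1}^n(1 - t_i^2)\prod_{j=1}^d(1 + t_{n+j})$ is independent of $\theta$, and the integral collapses to the $\theta$-average of $\chi_W(s_\theta^{-1})$. The character values from Section~\ref{subsec:O2Irreps} give this average as $-1$, $0$, or $1$ according as $W$ is $\det$, some $\tau_b$ with $b > 0$, or trivial, matching the constants $C_2$ and producing the second summand of Equation~\eqref{eq:O2Max}.

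For $I_{\SO_2}$, I would restrict $V$ to $\SO_2$: each $\tau_{\alpha_i}$ decomposes as $\epsilon_{\alpha_i}\oplus\epsilon_{-\alpha_i}$ and each $\det$-copy becomes trivial. The trivial copies pull the factor $\prod_{j=1}^d(1-t_{n+j})^{-1}$ outside the integral, leaving a multigraded $\Sp^1$-integral with weight vector $\bs{\alpha}^c$ in which the variable $t_i$ is assigned to both members of the $i$th cotangent pair. Applying Theorem~\ref{thrm:S1Max}, arranged so the negative weights come first (giving $k=n$), one then treats three cases: for $W = \det$ or the trivial representation, $\chi_W|_{\SO_2} = 1$ and the formula applies with effective $b = 0$, producing the main sum with $C_1 = 1$; for $W = \tau_b$, $\chi_W|_{\SO_2} = z^b + z^{-b}$ so the $\SO_2$-integral splits as $\Hilb_{\bs{\alpha}^c;b}^{\Sp^1} + \Hilb_{\bs{\alpha}^c;-b}^{\Sp^1}$, and Remark~\ref{rem:S1MaxSEmpty} combined with the invariance of $\bs{\alpha}^c$ under negation-and-permutation shows the two series agree after the cotangent-pair variable identification, yielding $C_1 = 2$. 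In every relevant case $b \geq 0$ and $\sum_i\alpha_i \geq 1$, so $\mathcal{S}_{\bs{\alpha}^c,b} = \emptyset$ and the residue-at-zero term of Theorem~\ref{thrm:S1Max} contributes nothing.

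The main bookkeeping task is tracking the denominator factors of Theorem~\ref{thrm:S1Max} under the identification $t_{i+n} = t_i$. The paired factor corresponding to $j = i+n$ collapses to $1 - \zeta^{\alpha_i}t_i\,t_i^{\alpha_i/\alpha_i} = 1 - t_i^2$ (using $\zeta^{\alpha_i} = 1$), producing the distinguished $(1-t_i^2)$-factor visible in Equation~\eqref{eq:O2Max}; all remaining denominator factors remain regular after the identification, so no degeneracy arises and no analytic-continuation argument is required. The only genuinely delicate step is the symmetry argument for $W = \tau_b$, which requires the full force of Remark~\ref{rem:S1MaxSEmpty} so that the $-b$-contribution can be rewritten in the form of the $+b$-contribution and the constant $C_1$ is seen correctly to double.
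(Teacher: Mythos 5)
Your proposal is correct and follows essentially the same route as the paper: splitting the Molien--Weyl integral over the two components of $\OO_2(\R)$, evaluating the reflection coset via the constant determinant $\prod_j(1-t_j^2)\prod_j(1+t_{n+j})$ and the character values giving $C_2$, and reducing the $\SO_2$-integral to Theorem~\ref{thrm:S1Max} with weight vector $(-\bs{\alpha},\bs{\alpha})$ under the identification $t_{n+i}:=t_i$, with the $b$ and $-b$ contributions for $W=\tau_b$ identified via Remark~\ref{rem:S1MaxSEmpty} to give $C_1=2$. Your observations that $\mathcal{S}_{\bs{\alpha}^c,b}=\emptyset$ and that no analytic continuation is needed in the multigraded setting both match the paper's argument.
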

\begin{proof}
Because $\OO_2(\R)$ has two connected components, the Molien-Weyl Theorem yields an integral over each connected
component with a prefactor of $1/2$ for each; see \cite[Section~4.1]{HananyMekareeya}. We first consider the case of a rotation
$z\in\OO_2(\R)^\circ = \SO_2(\R)$, which we consider as an element of $\Sp^1\subset\C$ as above.
If $W = \tau_\beta$ for some $\beta > 0$, then $\chi(z) = z^\beta + z^{-\beta}$; if $W$ is trivial or $\det$,
then $\chi(z) = 1$ is constant. Hence, the integral (with prefactor~$1/2$) over $\SO_2(\R)$ is given by
\begin{multline}
\label{eq:O2MaxIntegral1}
    \frac{1}{4\pi\sqrt{-1}} \int_{\Sp^1} \frac{\chi(z)\, dz}{z
        \prod\limits_{j=1}^n (1 - t_jz^{\alpha_j})(1 - t_jz^{-\alpha_j})
        \prod\limits_{j=1}^d (1 - t_{n+j}) }
    \\=
    \frac{1}{2 \prod\limits_{j=1}^d (1 - t_{n+j})}\left( \frac{1}{2\pi\sqrt{-1}}
        \int_{\Sp^1} \frac{\chi(z) \, dz}{z \prod\limits_{j=1}^n (1 - t_jz^{\alpha_j})(1 - t_jz^{-\alpha_j}) }\right).
\end{multline}
On the connected component of reflections, $\chi(z)$ is constant; specifically, $\chi(z) = 0$ if $W = \tau_\beta$
for some $\beta > 0$, $\chi(z) = -1$ if $W = \det$, and $\chi(z) = 1$ if $W$ is the trivial representation. Hence, denoting the constant
$\chi(z)$ simply as $\chi$, the integral (with prefactor~$1/2$) is
\begin{align}
    \nonumber
    \frac{1}{4\pi\sqrt{-1}} \int_{\Sp^1} \frac{\chi \, dz}{z
        \prod\limits_{j=1}^n (1 - t_j^2) \prod\limits_{j=1}^d (1 + t_{n+j}) }
    &=
    \label{eq:O2MaxIntegral2}
    \frac{\chi}{4\pi\sqrt{-1} \prod\limits_{j=1}^n (1 - t_j^2) \prod\limits_{j=1}^d (1 + t_{n+j})}
        \int_{\Sp^1} \frac{dz}{z}
    \\&=
    \frac{\chi   }{2 \prod\limits_{j=1}^n (1 - t_j^2) \prod\limits_{j=1}^d (1 + t_{n+j})}.
\end{align}

The integral in the second line of Equation~\eqref{eq:O2MaxIntegral1} can be interpreted as a specialization of that of Theorem~\ref{thrm:S1Max}
with the weight vector $\bs{a} = (-\alpha_1,\ldots,-\alpha_n,\alpha_1,\ldots,\alpha_n)$ and the substitution $t_{n+i} := t_i$.
When $W = \tau_\beta$, we apply the theorem with $b = \pm \beta$ and sum the results; when $W$ is the trivial representation
or $\det$, we use $b = 0$.

When $b \geq 0$, the assumption that each $\alpha_i > 0$ implies
$b + \sum_{i=1}^n \alpha_i > 0$ so that the integral (along with the prefactor in parentheses) is
\begin{equation}
\label{eq:O2MaxIntegral1Positive}
    \sum_{\substack{i=1 \\ \zeta^{\alpha_i} = 1}}^n \frac{\zeta^b t_i^{b/\alpha_i}}{ \alpha_i
        (1 -  t_i^2) \prod\limits_{\substack{j=1 \\ j\neq i}}^n
            (1 - \zeta^{-\alpha_j} t_j t_i^{- \alpha_j/\alpha_i})
            (1 - \zeta^{\alpha_j} t_j t_i^{\alpha_j/\alpha_i} ) }.
\end{equation}
When $b < 0$, we can use the fact that $-\bs{a} = \bs{a}$, up to permuting the weights; see Remarks~\ref{rem:S1MaxSEmpty}
and \ref{rem:S1BivarSEmpty}. Then as the permutation of weights corresponds to transposing $t_i$ with $t_{n+i}$ for
$1\leq i\leq n$, after applying the substitution $t_{n+i} := t_i$, we obtain Equation~\eqref{eq:O2MaxIntegral1Positive}
again. Combining Equations~\eqref{eq:O2MaxIntegral1}, \eqref{eq:O2MaxIntegral2}, and \eqref{eq:O2MaxIntegral1Positive}
completes the proof.
\end{proof}

Substituting $t_i = t$ in $\Hilb_{\bs{\alpha},d;W}^{\OO_2}(\bs{t})$ and, in the case that the $\alpha_i$ are not distinct,
applying the analytic continuation argument of \cite[Section~3.3]{HerbigHerdenSeatonT2}, we obtain the univariate Hilbert
series for the covariants of a unitary representation of $\OO_2(\R)$.

\begin{corollary}[Univariate Hilbert series of $\OO_2$-covariants]
\label{cor:O2Univar}
Let $\bs{\alpha} = (\alpha_1,\ldots,\alpha_n)$ where $n\geq 1$, each $\alpha_i > 0$, and $d\geq 0$, let
$V_{\bs{\alpha},d}\simeq \C^{2n+d}$ be the corresponding faithful representation of $\OO_2(\R)$, and let $W$ be an irreducible representation of $\OO_2(\R)$.
The univariate Hilbert series $\Hilb_{\bs{\alpha},d;W}^{\OO_2}(t)$ of the algebra of covariants
$\HOM(V,W)^{\OO_2(\R)} = (\C[V]\otimes W)^{\OO_2(\R)}$ is given by
\begin{equation}
\label{eq:O2Univar}
    \lim\limits_{(c_1,\ldots,c_n)\to\bs{\alpha}}\frac{C_1}{2(1 - t)^d}\!\!
     \sum_{\substack{i=1 \\ \zeta^{\alpha_i} = 1}}^n \frac{\zeta^\beta t^{\beta/\alpha_i}}{ \alpha_i
        (1 -  t^2) \prod\limits_{\substack{j=1 \\ j\neq i}}^n
            (1 - \zeta^{-\alpha_j} t^{(c_i - c_j)/c_i})
            (1 - \zeta^{\alpha_j} t^{(\alpha_i + \alpha_j)/\alpha_i} ) }
    + \frac{C_2}{2(1 - t^2)^n (1 + t)^d},
\end{equation}
where $\beta$, $C_1$, and $C_2$ are as in Theorem~\ref{thrm:O2Max}.
If the $\alpha_i$ are distinct, then Equation~\eqref{eq:O2Univar} simplifies to
\[
    \frac{C_1}{2(1 - t)^d}
     \sum_{\substack{i=1 \\ \zeta^{\alpha_i} = 1}}^n \frac{\zeta^\beta t^{\beta/\alpha_i}}{ \alpha_i
        (1 -  t^2) \prod\limits_{\substack{j=1 \\ j\neq i}}^n
            (1 - \zeta^{-\alpha_j} t^{(\alpha_i - \alpha_j)/\alpha_i})
            (1 - \zeta^{\alpha_j} t^{(\alpha_i + \alpha_j)/\alpha_i} ) }
    + \frac{C_2}{2(1 - t^2)^n (1 + t)^d}.
\]
\end{corollary}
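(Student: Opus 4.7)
The plan is to obtain the univariate Hilbert series by specializing the multigraded formula of Theorem~\ref{thrm:O2Max} via the substitution $t_i = t$ for all $i$, since under this substitution the monomial $\bs{t}^{\bs{d}}$ becomes $t^{d_1 + \cdots + d_n}$, which tracks total degree exactly. The prefactors transform directly: $\prod_{j=1}^d (1 - t_{n+j}) \mapsto (1-t)^d$, each $(1 - t_i^2) \mapsto (1 - t^2)$, and $\prod_{j=1}^n (1 - t_j^2) \prod_{j=1}^d (1 + t_{n+j}) \mapsto (1 - t^2)^n (1 + t)^d$. Inside the first sum of Theorem~\ref{thrm:O2Max}, each factor $(1 - \zeta^{\pm\alpha_j} t_j t_i^{\pm\alpha_j/\alpha_i})$ becomes $(1 - \zeta^{\pm\alpha_j} t^{(\alpha_i \pm \alpha_j)/\alpha_i})$, and $t_i^{b/\alpha_i}$ becomes $t^{b/\alpha_i}$. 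When the $\alpha_i$ are pairwise distinct, no individual factor of the denominator degenerates to an identically vanishing expression in $t$, and the substituted expression is a well-defined rational function of $t$, matching the last display in the statement.

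The complication arises when some $\alpha_i$ coincide. If $\alpha_i = \alpha_j$, then the factor $(1 - \zeta^{-\alpha_j} t^{(\alpha_i - \alpha_j)/\alpha_i})$ collapses to $(1 - \zeta^{-\alpha_j})$, which vanishes for every $\zeta$ in the summation index set, since $\zeta$ runs over $\alpha_i$-th roots of unity and $\alpha_i = \alpha_j$ forces $\zeta^{\alpha_j} = 1$. The resulting singularities in individual terms must cancel after summing, because the left-hand side $\Hilb_{\bs{\alpha},d;W}^{\OO_2}(t)$ is a priori a convergent power series in $t$. To extract the limiting value, I would invoke the analytic continuation argument of \cite[Section~3.3]{HerbigHerdenSeatonT2}: replace each $\alpha_i$ appearing in an exponent of $t$ by a perturbation parameter $c_i$, while keeping the roots-of-unity powers $\zeta^{\pm\alpha_j}$ and the rational exponents $b/\alpha_i$ and $1/\alpha_i$ unchanged. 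For distinct values of the $c_i$, the perturbed expression is well defined, and the limit as $(c_1,\ldots,c_n) \to \bs{\alpha}$ recovers $\Hilb_{\bs{\alpha},d;W}^{\OO_2}(t)$.

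The principal technical step will be justifying that this limit exists by showing cancellation of the polar parts contributed by the factors $(1 - \zeta^{-\alpha_j} t^{(c_i - c_j)/c_i})$ as $c_i \to c_j$. This is already carried out in the Herbig--Herden--Seaton treatment of the two-torus case \cite[Section~3.3]{HerbigHerdenSeatonT2}, where an analogous residue-style computation shows that the divergent contributions from coincident exponents cancel pairwise, leaving a finite limiting rational function. Once that lemma is cited, the corollary follows by term-by-term specialization of Theorem~\ref{thrm:O2Max} under $t_i = t$, yielding Equation~\eqref{eq:O2Univar} in both the distinct and non-distinct cases uniformly.
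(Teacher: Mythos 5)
Your proposal is correct and matches the paper's argument exactly: the paper likewise obtains Corollary~\ref{cor:O2Univar} by substituting $t_i = t$ into the maximally graded formula of Theorem~\ref{thrm:O2Max} and, when the $\alpha_i$ are not distinct, invoking the analytic continuation argument of \cite[Section~3.3]{HerbigHerdenSeatonT2} to handle the factors $(1 - \zeta^{-\alpha_j} t^{(\alpha_i - \alpha_j)/\alpha_i})$ that degenerate when $\alpha_i = \alpha_j$. Your additional observations---that the total-degree specialization is exact, which factors vanish and why, and that the singularities of individual terms must cancel because the left-hand side is an honest power series---are all consistent with what the paper leaves implicit.
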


\begin{remark}
\label{rem:O2MaxAsS1}
Note that we can express $\Hilb_{\bs{\alpha},d;W}^{\OO_2}$ in terms of Hilbert series associated to the
restricted $\Sp^1$-representation as follows. That is, Equation~\eqref{eq:O2Max} can be written
\begin{equation}
\label{eq:O2MaxW1dimAsS1}
    \Hilb_{\bs{\alpha},d;W}^{\OO_2}(\bs{t})
    =
    \frac{C_1}{2 \prod\limits_{j=1}^d (1 - t_{n+j})}
    \Hilb_{(-\bs{\alpha},\bs{\alpha});\beta}^{\Sp^1}(t_1,\ldots,t_n,t_1,\ldots,t_n)
    + \frac{C_2}{2 \prod\limits_{j=1}^n (1 - t_j^2) \prod\limits_{j=1}^d (1 + t_{n+j})},
\end{equation}
where $\beta = 0$ when $W$ is $\det$ or the trivial representation. In particular, let us temporarily relax the hypothesis that $V$ is faithful
for this remark. If $V$ contains at least
one $\tau_{\alpha_i}$ summand, $W = \tau_\beta$, and $g:=\gcd(\alpha_1,\ldots,\alpha_n) > 1$ does not divide $\beta$,
then $\Hilb_{(-\bs{\alpha},\bs{\alpha});\beta}^{\Sp^1}(t_1,\ldots,t_n,t_1,\ldots,t_n) = C_2 = 0$
so that the Hilbert series is $0$ and there are no covariants; see Remark~\ref{rem:S1Faithful}.
If $g$ divides $\beta$, we can without loss of generality take the quotient of $\OO_2$ by the kernel of the action
on $V$, resulting in the faithful representation $V_{\bs{\alpha}/g,d}$ and $W = \tau_{\beta/g}$.

Similarly, Equation~\eqref{eq:O2Univar} can be written
\begin{equation}
\label{eq:O2UnivarW1dimAsS1}
    \Hilb_{\bs{\alpha},d;W}^{\OO_2}(t)
    =
    \frac{C_1}{2 (1 - t)^d}
    \Hilb_{(-\bs{\alpha},\bs{\alpha});\beta}^{\Sp^1}(t)
    + \frac{C_2}{2 (1 - t^2)^n (1 + t)^d}.
\end{equation}
\end{remark}

In addition, we will need the $\N^2$-graded Hilbert series of the covariants of a cotangent-lifted representation
in the next section. A representation $V = V_{\bs{\alpha},d}\simeq \C^{2n+d}$
is self-dual so that the cotangent lift of $V$ is isomorphic to $2V$, i.e., $V_{2\bs{\alpha},2d}\simeq \C^{4n+2d}$
where $2\bs{\alpha}$ indicates $\bs{\alpha}$ concatenated with itself. As in Section~\ref{subsec:S1Cot}, we use the
grading $(d_1,d_2)$ induced by the decomposition $2V$ with formal variables $(s,t)$. Adapting Theorem~\ref{thrm:O2Max}
to the representation $V_{2\bs{\alpha},2d}\simeq \C^{4n+2d}$, applying the substitutions $t_i = s$ for $1\leq i \leq n$
and $t_i = t$ for $n < i \leq 2n$ to compute as in the proof of Corollary~\ref{cor:S1CotanBivar}, and using analytic
continuation \cite[Section~3.3]{HerbigHerdenSeatonT2} when the $\alpha_i$ are not distinct, we obtain the following.

\begin{corollary}[Bivariate Hilbert series of cotangent-lifted $\OO_2$-covariants]
\label{cor:O2Bivar}
Let $\bs{\alpha} = (\alpha_1,\ldots,\alpha_n)$ where $n\geq 1$, each $\alpha_i > 0$, and $d\geq 0$, let
$V = V_{\bs{\alpha},d}\simeq \C^{2n+d}$ be the corresponding faithful representation of $\OO_2(\R)$,
and let $W$ be an irreducible representation of $\OO_2(\R)$. The $\N^2$-graded
Hilbert series $\Hilb_{\bs{\alpha},d;W}^{\OO_2,\off}(s,t)$ associated to the cotangent lift of $V$ is given by
\begin{multline}
\label{eq:O2CotanW1dim}
    \lim\limits_{(c_1,\ldots,c_{2n})\to 2\bs{\alpha}}
    \frac{C_1}{2 (1-s)^d(1-t)^d(1 - st)}
     \sum_{\substack{i=1 \\ \zeta^{\alpha_i} = 1}}^n \Bigg(
        \frac{\zeta^\beta s^{\beta/\alpha_i} (1 - s^{-c_{n+i}/c_i}t)^{-1}}
     {\alpha_i (1 - s^2)\prod\limits_{\substack{j=1 \\ j\neq i}}^n F_1(\zeta,i,j)}
    + \frac{\zeta^\beta t^{\beta/\alpha_i}(1 - st^{-c_{i}/c_{n+i}})^{-1}}{ \alpha_i (1 -  t^2)
        \prod\limits_{\substack{j=1 \\ j\neq i}}^n F_2(\zeta,i,j)}
     \Bigg)\\
    + \frac{C_2}{2 (1 - s^2)^n (1 - t^2)^n (1 + s)^d (1 + t)^d},
\end{multline}
where $\beta$, $C_1$, and $C_2$ are as in Theorem~\ref{thrm:O2Max},
\[
    F_1(\zeta,i,j)
    =   (1 - \zeta^{-\alpha_j} s^{(c_i - c_j)/c_i})
        (1 - \zeta^{\alpha_j} s^{(\alpha_i + \alpha_j)/\alpha_i} )
        (1 - \zeta^{-\alpha_{j}} s^{- c_{n+j}/c_i}t)
        (1 - \zeta^{\alpha_{j}} s^{\alpha_{j}/\alpha_i}t)
\]
and
\[
    F_2(\zeta,i,j)
    =   (1 - \zeta^{-\alpha_j} st^{- c_j/c_{n+i}})
        (1 - \zeta^{\alpha_j} st^{\alpha_j/\alpha_{i}} )
        (1 - \zeta^{-\alpha_{j}} t^{(c_{n+i} - c_{n+j})/c_{n+i}})
        (1 - \zeta^{\alpha_{j}} t^{(\alpha_{i} + \alpha_{j})/\alpha_{i}} ).
\]
If the $\alpha_i$ are distinct, then the limit is unnecessary and we may substitute
$c_i = \alpha_i$ and $c_{n+i} = \alpha_i$ for $1\leq i\leq n$.
\end{corollary}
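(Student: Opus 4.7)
The plan is to follow the strategy announced in the paragraph preceding the corollary: apply Theorem~\ref{thrm:O2Max} to the doubled representation $V_{2\bs{\alpha},2d}$ and carry out the variable substitutions that realize the cotangent bigrading. The essential observation is that every finite-dimensional unitary representation of $\OO_2(\R)$ is self-dual, so $V\oplus V^\ast\simeq V_{2\bs{\alpha},2d}$, where $2\bs{\alpha}$ denotes the concatenation $(\alpha_1,\ldots,\alpha_n,\alpha_1,\ldots,\alpha_n)$. The cotangent bigrading assigns degree $(1,0)$ to the first copy of $V$ and degree $(0,1)$ to the second.

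Applying Theorem~\ref{thrm:O2Max} to $V_{2\bs{\alpha},2d}$ produces an expression in $2n+2d$ formal variables, to which I would apply the substitutions $t_i = s$ for $1\leq i\leq n$, $t_i = t$ for $n+1\leq i\leq 2n$, $t_{2n+j} = s$ for $1\leq j\leq d$, and $t_{2n+d+j} = t$ for $1\leq j\leq d$. Direct substitution in the $C_2$-summand gives the claimed $C_2/[2(1-s^2)^n(1-t^2)^n(1+s)^d(1+t)^d]$ immediately, while the prefactor of the $C_1$-sum collapses to $C_1/[2(1-s)^d(1-t)^d]$ times the factor $(1-st)^{-1}$ yet to be extracted.

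For the $C_1$-sum $\sum_{i=1}^{2n}$, I would split as $\sum_{i=1}^n + \sum_{i=n+1}^{2n}$ and analyze the halves symmetrically. Fix $i \leq n$; in the product over $j \in [1,2n]\setminus\{i\}$, the ``dual'' index $j = n+i$ plays a distinguished role: since $\alpha_{n+i} = \alpha_i$ and any summation index $\zeta$ satisfies $\zeta^{\alpha_i}=1$, the two factors coming from this $j$ specialize to $(1-ts^{-1})$ and $(1-ts)$. The factor $(1-ts)$ is independent of $i$ and $\zeta$ and thus pulls out across the entire sum, producing the common prefactor $(1-st)^{-1}$; the factor $(1-ts^{-1})$ remains inside the $i$-th term, where it is exactly the claimed $(1-s^{-c_{n+i}/c_i}t)$ in the nondegenerate limit $c_{n+i}=c_i=\alpha_i$. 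The remaining $4(n-1)$ factors, coming from $j\in[1,n]\setminus\{i\}$ (with $t_j=s$, weight $\alpha_j$, two factors) and $j+n$ (with $t_{j+n}=t$, weight $\alpha_j$, two factors), assemble into $F_1(\zeta,i,j)$ by direct exponent matching. The range $i\geq n+1$ is handled analogously by pairing $i$ with $i-n$ and reindexing the summation variable, yielding the second sum with $F_2(\zeta,i,j)$.

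When the $\alpha_i$ fail to be distinct, factors of the form $(1-\zeta^{\alpha_j}s^{(\alpha_i\pm\alpha_j)/\alpha_i})$ can become singular and individual summands develop spurious poles; as in the proof of Corollary~\ref{cor:S1CotanBivar}, this is resolved by introducing generic deformation parameters $c_i$ approaching the $\alpha_i$ (and $c_{n+i}$ approaching $\alpha_i$ independently in the second copy) and invoking the analytic continuation argument of \cite[Section~3.3]{HerbigHerdenSeatonT2}. The principal obstacle is not conceptual but combinatorial: the careful bookkeeping of the substitution, the isolation of the dual-pair factor that produces $(1-st)^{-1}$, and the verification that the surviving factors match the exponent patterns of $F_1$ and $F_2$ precisely. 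Once the dual-pair structure of the cotangent lift is correctly identified, the remainder of the argument is mechanical.
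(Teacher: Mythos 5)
Your proposal is correct and follows the same route the paper takes (the paper gives only the sketch in the paragraph preceding the corollary: apply Theorem~\ref{thrm:O2Max} to $V_{2\bs{\alpha},2d}$, substitute $t_i=s$ for the first copy and $t_i=t$ for the second, and use analytic continuation when the $\alpha_i$ coincide). Your identification of the dual-pair index $j=n+i$ as the source of the common $(1-st)^{-1}$ prefactor and of the leftover factor $(1-s^{-c_{n+i}/c_i}t)^{-1}$ is exactly the bookkeeping the paper leaves implicit.
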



\subsection{Symplectic quotients by $\OO_2$}
\label{subsec:O2Symplectic}

Let $V = V_{\bs{\alpha},d}\simeq \C^{2n+d}$ be a representation of $\OO_2(\R)$ as in the previous section.
We consider coordinates $(\bs{x},\bs{y}):=(x_{1,1},x_{1,2},x_{2,1},\ldots,x_{n,2},y_1,\ldots,y_d)$
for $V$ where $(x_{i,1},x_{i,2})$ are coordinates for $\tau_{\alpha_i}$ and the $y_i$
are coordinates for each copy of $\det$. As the moment map $J\co V\to\mathfrak{g}^\ast$
depends only on the connected component of the identity of the group, it coincides with the moment
map of the underlying circle action,
\[
    J(\bs{x},\bs{y})
    =
    \sum\limits_{i=1}^n \alpha_i \big(x_{i,1} \overline{x_{i,1}} - x_{i,2} \overline{x_{i,2}}\big).
\]
It follows from this expression that $J\big(r_\theta(\bs{x},\bs{y})\big) = J(\bs{x},\bs{y})$ for any
$r_\theta\in\SO_2(\R)$, i.e., $J$ is invariant under the action of $\SO_2(\R)\leq\OO_2(\R)$. However,
if $s_\theta\in\OO_2(\R)\smallsetminus\SO_2(\R)$, then $s_\theta$
can be expressed as an element of $\SO_2(\R)$ followed by swapping each $x_{i,1}$ with $x_{i,2}$, and therefore
we have $J\big(s_\theta(\bs{x},\bs{y})\big) = - J(\bs{x},\bs{y})$. Hence, the moment map is never an $\OO_2(\R)$-invariant,
and the relationship between the on- and off-shell invariants is not as simple as in the case of the circle.

However, the Hilbert series of the on-shell invariants of the symplectic quotient can be computed using the techniques
of \cite[Section~6.3]{HerbigSchwarzSeaton2} as applied to the case $\SU_2$ in \cite[Proposition~2.1]{HerbigHerdenSeatonSU2}.
Note that the shell only depends on the action of the Lie algebra of $\OO_2(\R)$ and hence on the corresponding action
of $\SO_2(\R)$. Therefore, if $V = d\det$, then the shell is trivially equal to $V$. Otherwise, $V$
has at least one $\tau_\alpha$ summand with $\alpha > 0$. As a representation of $\SO_2(\R) \simeq \Sp^1$, $\tau_\alpha$ has weight vector
$(-\alpha,\alpha)$ which by \cite[Theorem~3.2]{HerbigSchwarz} is \emph{$1$-large}; see that reference for the definition.
It follows that $V$ is $1$-large by \cite[Theorem~3.1]{HerbigSchwarz} so that the complexification of the shell is a
reduced, irreducible complete intersection.

Let $\mu = J\otimes_\R\C$ denote the complexification of $J$.
The Koszul complex of $\mu$ is a free resolution of $\C[V\oplus V^\ast]/(\mu)$ yielding an exact sequence
\[
    0   \to \C[V\oplus V^\ast]\otimes \mathfrak{o}_2 \to \C[V\oplus V^\ast] \to \C[V\oplus V^\ast]/(\mu)    \to 0.
\]
Note that $\mathfrak{o}_2\simeq\det$ as an $\OO_2(\C)$-module and the elements of $\mathfrak{o}_2$ are in degree
$(1,1)$ with respect to the bigrading. Taking invariants and using the fact that
$(\C[V\oplus V^\ast]\otimes \det)^{\OO_2(\C)}$ is the module of covariants with $W = \det$, it follows that
the Hilbert series $\Hilb_{\bs{\alpha},d}^{\OO_2,\on}(s,t)$ of the on-shell invariants of the symplectic quotient
associated to $V$ is
\[
    \Hilb_{\bs{\alpha},d}^{\OO_2,\on}(s,t)
    =
    \Hilb_{\bs{\alpha},d}^{\OO_2,\off}(s,t) - st \Hilb_{\bs{\alpha},d;\det}^{\OO_2,\off}(s,t).
\]
Combining this with Corollary~\ref{cor:O2Bivar} yields the following.

\begin{corollary}[Bivariate Hilbert series of on-shell invariants of an $\OO_2$-symplectic quotient]
\label{cor:O2SymplecticBivar}
Let $\bs{\alpha} = (\alpha_1,\ldots,\alpha_n)$ where $n\geq 1$, each $\alpha_i > 0$, and $d\geq 0$, and let
$V = V_{\bs{\alpha},d}\simeq \C^{2n+d}$ be the corresponding faithful representation of $\OO_2(\R)$. Then the bivariate
Hilbert series $\Hilb_{\bs{\alpha},d}^{\OO_2,\on}(s,t)$ of the real on-shell invariants of the corresponding
symplectic quotient is given by
\begin{multline}
\label{eq:O2SymplecticBivar}
    \lim\limits_{(c_1,\ldots,c_{2n})\to 2\bs{\alpha}}
    \frac{1}{2 (1-s)^d(1-t)^d}
     \sum_{\substack{i=1 \\ \zeta^{\alpha_i} = 1}}^n \Bigg(\,
        \frac{(1 - s^{-c_{n+i}/c_i}t)^{-1}}
     {\alpha_i (1 - s^2)\prod\limits_{\substack{j=1 \\ j\neq i}}^n F_1(\zeta,i,j)}
    +
    \frac{(1 - st^{-c_{i}/c_{n+i}})^{-1}}{ \alpha_i (1 -  t^2)
        \prod\limits_{\substack{j=1 \\ j\neq i}}^n F_2(\zeta,i,j)}
     \Bigg)
    \\
    + \frac{1 + st}{2 (1 - s^2)^n (1 - t^2)^n (1 + s)^d (1 + t)^d},
\end{multline}
where $F_1(\zeta,i,j)$ and $F_2(\zeta,i,j)$ are as defined in Corollary~\ref{cor:O2Bivar}.

Adopting the convention that $\alpha_{n+i}:= \alpha_i$ for $1\leq i\leq n$, the univariate
Hilbert series $\Hilb_{\bs{\alpha},d}^{\OO_2,\on}(t)$ of the real on-shell invariants of the
symplectic quotient is given by
\begin{equation}
\label{eq:O2SymplecticUnivar}
    \lim\limits_{(c_1,\ldots,c_{2n})\to 2\bs{\alpha}}
    \frac{1}{2 (1-t)^{2d}}
     \sum_{\substack{i=1 \\ \zeta^{\alpha_i} = 1}}^{2n}
        \frac{1}
     {\alpha_i \prod\limits_{\substack{j=1 \\ j\neq i}}^{2n}
        (1 - \zeta^{-\alpha_j} t^{(c_i - c_j)/c_i})
        (1 - \zeta^{\alpha_j} t^{(\alpha_i + \alpha_j)/\alpha_i} )}
    + \frac{1 + t^2}{2 (1 - t^2)^{2n}(1 + t)^{2d}}.
\end{equation}
\end{corollary}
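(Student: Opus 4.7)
The proposal is to combine the exact sequence established in the paragraph preceding the statement with Corollary~\ref{cor:O2Bivar}, then deduce the univariate formula by the substitution $s=t$ and a reindexing.

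For the bivariate formula, I start from
\[
\Hilb_{\bs{\alpha},d}^{\OO_2,\on}(s,t)
=\Hilb_{\bs{\alpha},d;\mathbf{1}}^{\OO_2,\off}(s,t)-st\,\Hilb_{\bs{\alpha},d;\det}^{\OO_2,\off}(s,t),
\]
which holds once we know that $V_{\bs{\alpha},d}$ is $1$-large (guaranteed by the hypothesis $n\geq 1$, together with the $1$-largeness of each $\tau_{\alpha_i}$ as an $\Sp^1$-module and \cite[Theorem~3.1]{HerbigSchwarz}). Each of the two off-shell series is supplied by Corollary~\ref{cor:O2Bivar} with $b=0$ and $C_1=1$. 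The only thing that differs between the two values of $W$ is $C_2$: it is $+1$ for the trivial representation and $-1$ for $\det$. Hence the big $\sum_i$ piece appears in both series with identical summands and identical prefactor $\bigl[2(1-s)^d(1-t)^d(1-st)\bigr]^{-1}$; upon forming $\Hilb^{\off}-st\,\Hilb^{\det,\off}$, these combine with coefficient $(1-st)$, which cancels the $(1-st)$ in the denominator and produces exactly the first sum of Equation~\eqref{eq:O2SymplecticBivar}. The ``constant'' piece combines as $1-st\cdot(-1)=1+st$, producing the second summand of Equation~\eqref{eq:O2SymplecticBivar}. This is exactly the claimed formula.

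For the univariate formula, I would simply specialize $s=t$ in the bivariate result (equivalently, use $\Hilb^{\on}(t)=\Hilb^{\off}(t)-t^2\Hilb^{\det,\off}(t)$ and plug in Corollary~\ref{cor:O2Univar}, noting that $b=0$ absorbs the $\zeta^b$ phases). Under this substitution, the two inner summands in Corollary~\ref{cor:O2Bivar} become structurally symmetric: the ``$F_1$-summand'' (with pole at $s=1$) and the ``$F_2$-summand'' (with pole at $t=1$) each contribute $n$ terms, and together, with the bookkeeping convention $\alpha_{n+i}:=\alpha_i$ and $c_{n+i}:=c_i$ in the limiting process, they repackage as a single sum indexed by $i=1,\ldots,2n$. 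The separated factors $(1-s^{-c_{n+i}/c_i}t)^{-1}$ and $(1-st^{-c_i/c_{n+i}})^{-1}$ both become $(1-t^{(c_i-c_{n+i})/c_i})^{-1}$ after $s\mapsto t$, the factors $(1-s^2)^{-1}$ and $(1-t^2)^{-1}$ both become $(1-t^2)^{-1}$, and the four factors comprising $F_1(\zeta,i,j)$ (respectively $F_2(\zeta,i,j)$) at $s=t$ split naturally into two pairs indexed by $j$ and $n+j$, producing the product over $j\neq i$ from $1$ to $2n$ of the two factors $(1-\zeta^{-\alpha_j}t^{(c_i-c_j)/c_i})(1-\zeta^{\alpha_j}t^{(c_i+c_j)/c_i})$ displayed in Equation~\eqref{eq:O2SymplecticUnivar}. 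The constant piece specializes to $(1+t^2)/[2(1-t^2)^{2n}(1+t)^{2d}]$ directly.

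The main obstacle is purely combinatorial: the identification of the four factors of $F_1(\zeta,i,j)|_{s=t}$ with the two-factor pair for indices $j$ and $n+j$ (and similarly for $F_2$), together with the verification that the ``diagonal'' factors $j=n+i$ (which would be singular in the limit because $\zeta^{\alpha_{n+i}}=\zeta^{\alpha_i}=1$) are precisely the ones pulled out in front as $(1-t^2)^{-1}(1-t^{(c_i-c_{n+i})/c_i})^{-1}$. This check is routine once one keeps track of which roots of unity and which exponents appear, but it is the only nontrivial bookkeeping; the rest is algebraic simplification and an application of the cited structural results.
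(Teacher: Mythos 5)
Your proposal is correct and follows exactly the paper's route: the Koszul-resolution identity $\Hilb^{\on}(s,t)=\Hilb^{\off}(s,t)-st\,\Hilb^{\det,\off}(s,t)$ from the preceding paragraph, substitution of Corollary~\ref{cor:O2Bivar} with $C_1=1$ and $C_2=\pm 1$ so that the $(1-st)$ factor cancels and the constant pieces combine to $1+st$, and the specialization $s=t$ with the $j\leftrightarrow n+j$ reindexing for the univariate statement. Your bookkeeping of the diagonal factors in the $s=t$ limit is in fact more explicit than what the paper records.
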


The univariate Hilbert series of on-shell invariants can be obtained from the bivariate Hilbert series by
taking the limit as $s\to t$. In particular, the singularities at $s = t$ are removable.

\begin{remark}
\label{rem:O2SymplecticAsS1}
It will be helpful in Section~\ref{subsec:O2Laurent} to observe the following. Using Equation~\eqref{eq:O2MaxW1dimAsS1}
in Remark~\ref{rem:O2MaxAsS1}, Equation~\eqref{eq:O2SymplecticBivar} can also be written
\begin{equation}
\label{eq:O2CotanAsS1}
    \Hilb_{\bs{\alpha},d}^{\OO_2,\on}(s,t) =
    \frac{1}{2(1 - s)^d(1 - t)^d}\Hilb_{(-\bs{\alpha}, \bs{\alpha})}^{\Sp^1,\on}(s,t) + \frac{1 + st}{2 (1 - s^2)^n (1 - t^2)^n (1 + s)^d (1 + t)^d},
\end{equation}
where $\Hilb_{(-\bs{\alpha}, \bs{\alpha})}^{\Sp^1,\on}(s,t)$ is the Hilbert series of the on-shell invariants of the symplectic quotient
with weight vector $(-\bs{\alpha}, \bs{\alpha})$ as in Corollary~\ref{cor:S1SymplecticBivar}. Similarly,
Equation~\eqref{eq:O2SymplecticUnivar} can be written
\begin{equation}
\label{eq:O2SymplecticUnivarAsS1}
    \Hilb_{\bs{\alpha},d}^{\OO_2,\on}(t) =
    \frac{1}{2 (1-t)^{2d}} \Hilb_{(-\bs{\alpha}, \bs{\alpha})}^{\Sp^1,\on}(t)
    + \frac{1 + t^2}{2 (1 - t^2)^{2n}(1 + t)^{2d}}.
\end{equation}
\end{remark}


\subsection{The Laurent coefficients for symplectic quotients by $\OO_2(\R)$}
\label{subsec:O2Laurent}

Let $V = V_{\bs{\alpha},d}\simeq \C^{2n+d}$ and $W$ be representations of $\OO_2(\R)$ with $W$ irreducible
as above. As in Section~\ref{subsec:S1Laurent}, we consider the Laurent coefficients $\gamma_m^{\OO_2}(\bs{\alpha},d;W)$
of $\Hilb_{\bs{\alpha},d;W}^{\OO_2}(t)$ at $t = 1$ where $\gamma_0^{\OO_2}(\bs{\alpha},d;W)$ occurs in degree $1 - 2n - d$.
If $V$ contains no $\tau_{\alpha_i}$ summands, i.e., $\bs{\alpha}$ is empty, then $V = d\det$.
Following the proof of Theorem~\ref{thrm:O2Max}, Equations~\eqref{eq:O2MaxIntegral1} and \eqref{eq:O2MaxIntegral2}, the Hilbert series is simply
\[
    \Hilb_{\bs{\alpha},d;W}^{\OO_2}(t) = \frac{2 - C_1}{2 (1 - t)^d}
    +
    \frac{C_2}{2(1 + t)^d}
\]
with $C_1$ and $C_2$ as defined in Theorem~\ref{thrm:O2Max}, and the Laurent expansion is given by
\[
    \Hilb_{\bs{\alpha},d;W}^{\OO_2}(t)
    =
    \frac{2 - C_1}{2 (1 - t)^d} +
    \sum\limits_{m=0}^\infty \frac{C_2}{2^{d+m}} {d + m - 1 \choose m} (1 - t)^m.
\]
Hence, we will hereafter ignore this case and assume that $V$ contains at least one $\tau_{\alpha_i}$
summand, in which case we can assume by Remark~\ref{rem:O2MaxAsS1} that $V$ is faithful.

Using Equation~\eqref{eq:O2UnivarW1dimAsS1} and the fact that the second term has pole order $n$,
only the first term contributes to the first two Laurent coefficients of $\Hilb_{\bs{\alpha},d;W}^{\OO_2}(t)$
except for small values of $n$. Considering these values by inspection, we have the following.

\begin{corollary}[$\gamma_0$ and $\gamma_1$ for $\OO_2$-covariants]
\label{cor:O2Laurent}
Let $V = V_{\bs{\alpha},d}\simeq \C^{2n+d}$ and $W$ be representations of $\OO_2(\R)$ with $V$ faithful,
$n\geq 1$, and $W$ irreducible. Let $\beta = 0$ if $W$ is $\det$ or the trivial representation and otherwise
let $W = \tau_\beta$. Then
\begin{align*}
    \gamma_0^{\OO_2}(\bs{\alpha},d;W)
    &=      \begin{cases}
                \frac{C_1 \gamma_0^{\Sp^1}((-\bs{\alpha},\bs{\alpha});\beta)}{2} + \frac{C_2}{4},
                    &       \text{if } n = 1 \text{ and } d = 0,
                \\
                \frac{C_1 \gamma_0^{\Sp^1}((-\bs{\alpha},\bs{\alpha});\beta)}{2},
                    &       \text{otherwise,}
            \end{cases}
    \\
    \gamma_1^{\OO_2}(\bs{\alpha},d;W)
    &=      \begin{cases}
                \frac{C_1 \gamma_1^{\Sp^1}((-\bs{\alpha},\bs{\alpha});\beta)}{2} + \frac{C_2}{8},
                    &       \text{if } n + d \leq 2,
                \\
                \frac{C_1 \gamma_1^{\Sp^1}((-\bs{\alpha},\bs{\alpha});\beta)}{2},
                    &       \text{otherwise,}
            \end{cases}
\end{align*}
where $C_1$ and $C_2$ are as defined in Theorem~\ref{thrm:O2Max}.
\end{corollary}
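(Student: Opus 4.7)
The plan is to apply Equation~\eqref{eq:O2UnivarW1dimAsS1} from Remark~\ref{rem:O2MaxAsS1} and split
\[
    \Hilb_{\bs{\alpha},d;W}^{\OO_2}(t) = T_1(t) + T_2(t), \qquad
    T_1(t) = \frac{C_1}{2(1-t)^d}\,\Hilb_{(-\bs{\alpha},\bs{\alpha});b}^{\Sp^1}(t), \qquad
    T_2(t) = \frac{C_2}{2(1-t^2)^n(1+t)^d},
\]
and then analyze how each summand contributes to $\gamma_0^{\OO_2}$ and $\gamma_1^{\OO_2}$ by comparing pole orders at $t=1$.

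First I would handle $T_1$. The cotangent-lifted $\Sp^1$-Hilbert series has Laurent expansion $\Hilb_{(-\bs{\alpha},\bs{\alpha});b}^{\Sp^1}(t)=\sum_{m\geq 0}\gamma_m^{\Sp^1}(\bs{a}^c;b)(1-t)^{m-2n+1}$ (with the indexing convention from Theorem~\ref{thrm:S1LaurentCotan}), and multiplication by $(1-t)^{-d}$ merely shifts every exponent down by $d$. Consequently, the coefficient of $(1-t)^{m-2n-d+1}$ in $T_1$ is exactly $\tfrac{C_1}{2}\gamma_m^{\Sp^1}(\bs{a}^c;b)$, so $T_1$ contributes precisely $\tfrac{C_1}{2}\gamma_m^{\Sp^1}(\bs{a}^c;b)$ to $\gamma_m^{\OO_2}(\bs{a},d;b)$ for every $m\geq 0$.

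Next I would examine $T_2 = C_2/\bigl(2(1-t)^n(1+t)^{n+d}\bigr)$, which has pole order exactly $n$ at $t=1$. Hence $T_2$ contributes to $\gamma_m^{\OO_2}(\bs{a},d;b)$, the coefficient of $(1-t)^{m-2n-d+1}$, only when $m-2n-d+1\geq -n$, i.e., $m\geq n+d-1$. For $m=0$ and $n\geq 1$ this forces $(n,d)=(1,0)$; for $m=1$ it gives the three cases $(n,d)\in\{(1,0),(1,1),(2,0)\}$, matching the case splits in the statement. In each of these cases the relevant Laurent coefficient of $T_2$ is obtained directly by expanding $(1+t)^{-(n+d)}$ at $t=1$ via the substitution $u=1-t$ so that $(1+t)^{-(n+d)}=2^{-(n+d)}\sum_{k\geq 0}\binom{-(n+d)}{k}(-u/2)^k$, and extracting the appropriate coefficient of $u^{m-n}$. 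A short calculation shows that the contribution to $\gamma_0$ in case $(1,0)$ is $C_2/4$, while the contribution to $\gamma_1$ equals $C_2/8$ in each of the three exceptional cases.

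There is no serious obstacle; the only point requiring care is the indexing convention for the $\gamma_m^{\Sp^1}$, which is defined relative to the maximal possible pole order $2n-1$ rather than the actual pole order. This makes the index shift induced by $(1-t)^{-d}$ completely transparent and keeps the decomposition well-defined even when the nominal leading coefficient vanishes. Summing the contributions of $T_1$ and $T_2$ in each case yields the piecewise formulas for $\gamma_0^{\OO_2}(\bs{a},d;b)$ and $\gamma_1^{\OO_2}(\bs{a},d;b)$ stated in the corollary.
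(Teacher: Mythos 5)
Your proposal is correct and follows essentially the same route as the paper: it uses the decomposition of $\Hilb_{\bs{\alpha},d;W}^{\OO_2}(t)$ from Equation~\eqref{eq:O2UnivarW1dimAsS1}, observes that the second term has pole order $n$ at $t=1$ so it can only affect $\gamma_0$ and $\gamma_1$ when $n+d\leq 1$ or $n+d\leq 2$ respectively, and evaluates those exceptional cases by direct expansion. Your explicit verification that the contributions are $C_2/4$ and $C_2/8$, and your remark on the indexing convention relative to the maximal pole order $2n+d-1$, fill in exactly the ``inspection'' step the paper leaves implicit.
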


In the same way, the following is a consequence of Equation~\eqref{eq:O2SymplecticUnivarAsS1} and inspection of low-dimensional cases.

\begin{corollary}[$\gamma_0$, $\gamma_1$, $\gamma_2$, and $\gamma_3$ for on-shell invariants of $\OO_2$-symplectic quotients]
\label{cor:O2LaurentSymplectic}
Let $V = V_{\bs{\alpha},d}\simeq \C^{2n+d}$ be a faithful representation of $\OO_2(\R)$ with $n\geq 1$. Then
\begin{align*}
    \gamma_0^{\OO_2,\on}(\bs{\alpha},d)
    &=      \begin{cases}
                \frac{ \gamma_0^{\Sp^1,\on}((-\bs{\alpha},\bs{\alpha}))}{2} + \frac{1}{4},
                    &       \text{if } n = 1 \text{ and } d = 0,
                \\
                \frac{ \gamma_0^{\Sp^1,\on}((-\bs{\alpha},\bs{\alpha}))}{2},
                    &       \text{otherwise,}
            \end{cases}
    \\
    \gamma_1^{\OO_2,\on}(\bs{\alpha},d)
    &=      0, \text{ and }
    \\
    \gamma_2^{\OO_2,\on}(\bs{\alpha},d) = \gamma_3^{\OO_2,\on}(\bs{\alpha},d)
    &=      \begin{cases}
                \frac{\gamma_2^{\Sp^1,\on}((-\bs{\alpha},\bs{\alpha}))}{2} + \frac{1}{16},
                    &       \text{if } n + d \leq 2,
                \\
                \frac{\gamma_2^{\Sp^1,\on}((-\bs{\alpha},\bs{\alpha}))}{2},
                    &       \text{otherwise.}
            \end{cases}
\end{align*}
\end{corollary}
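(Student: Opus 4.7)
The plan is to mirror the approach used in the proof of Corollary~\ref{cor:O2Laurent}, applying the decomposition in Equation~\eqref{eq:O2SymplecticUnivarAsS1} and doing a pole-order analysis. Recall that
\[
\Hilb_{\bs{\alpha},d}^{\OO_2,\on}(t) = \frac{1}{2(1-t)^{2d}}\,\Hilb_{(-\bs{\alpha},\bs{\alpha})}^{\Sp^1,\on}(t) + \frac{1+t^2}{2(1-t^2)^{2n}(1+t)^{2d}},
\]
so the Laurent expansion at $t=1$ is the sum of the Laurent expansions of the two terms. The first term has pole order $2d + (4n-2) = 4n + 2d - 2$ at $t=1$, while the second term has pole order $2n$. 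The first term therefore dominates whenever $4n + 2d - 2 > 2n$, i.e.\ whenever $n + d > 1$; equality holds only for $n=1,d=0$. More generally, the second term can contribute to the coefficient $\gamma_m^{\OO_2,\on}$ only when $m \geq 2n + 2d - 2$.

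To analyze the second term, I would substitute $u = 1-t$ and expand $1+t^2 = 2 - 2u + u^2$, $(1+t)^{-(2n+2d)} = 2^{-(2n+2d)}(1-u/2)^{-(2n+2d)}$, and apply the Cauchy product formula to obtain the Laurent coefficients. In particular the leading coefficient of the second term is $1/2^{2n+2d}$, which contributes $\tfrac{1}{4}$ to $\gamma_0^{\OO_2,\on}$ in the single case $n=1,d=0$ (since $2n=4n+2d-2$ there). The next coefficient involves the factor $(n+d-1)$, which vanishes in exactly the $n=1,d=0$ case, and the coefficients at $u^0$ and $u^1$ turn out to be $\tfrac{1}{16}$ and $\tfrac{1}{16}$, respectively, whenever $n + d \leq 2$.

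For the first term, division by $2(1-t)^{2d}$ simply shifts indices by $2d$ and rescales by $\tfrac{1}{2}$, so the coefficient $\gamma_m^{\Sp^1,\on}(\bs{a}^c)$ of $\Hilb^{\Sp^1,\on}$ appears as $\tfrac{1}{2}\gamma_m^{\Sp^1,\on}(\bs{a}^c)$ in $\gamma_m^{\OO_2,\on}$. The claim that $\gamma_1^{\OO_2,\on} = 0$ follows by combining the vanishing contribution from the second term with the identity $\gamma_1^{\Sp^1,\on}(\bs{a}^c) = 0$; this last identity is a direct consequence of the relation $\Hilb^{\Sp^1,\on} = (1-t^2)\Hilb^{\Sp^1,\off}$ together with Theorem~\ref{thrm:S1LaurentCotan}, which states $\gamma_1^{\Sp^1}(\bs{a}^c;b) = \tfrac{1}{2}\gamma_0^{\Sp^1}(\bs{a}^c;b)$: writing $1-t^2 = 2(1-t) - (1-t)^2$ and multiplying, the coefficient of $(1-t)^{3-2n}$ in $\Hilb^{\Sp^1,\on}$ collapses to $2\gamma_1^{\Sp^1} - \gamma_0^{\Sp^1} = 0$.

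Combining these ingredients term-by-term gives the stated formulas for $\gamma_0, \gamma_1, \gamma_2, \gamma_3$. The main obstacle I expect is verifying the equality $\gamma_2^{\OO_2,\on} = \gamma_3^{\OO_2,\on}$: this requires both that the relevant second-term coefficients $s_0$ and $s_1$ coincide in the small cases $n+d \leq 2$, and that the analogous relation $\gamma_2^{\Sp^1,\on}(\bs{a}^c) = \gamma_3^{\Sp^1,\on}(\bs{a}^c)$ holds in general. The former follows from the explicit expansion of $\tfrac{1+t^2}{2(1-t^2)^{2n}(1+t)^{2d}}$ outlined above; the latter follows from a second consequence of Theorem~\ref{thrm:S1LaurentCotan} (together with the Koszul relation on-shell), which after the standard manipulation gives the desired identity. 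The remainder of the argument is then a short bookkeeping exercise, with the small-case additive corrections $\tfrac{1}{4}$ and $\tfrac{1}{16}$ appearing exactly where the second-term contribution is nonzero.
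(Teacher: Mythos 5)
Your overall strategy is the same as the paper's: the paper's entire proof is the one-line remark that the corollary ``is a consequence of Equation~\eqref{eq:O2SymplecticUnivarAsS1} and inspection of low-dimensional cases,'' and you are filling in exactly that computation. Most of your details check out: the pole orders ($4n+2d-2$ versus $2n$), the threshold $m\ge 2n+2d-2$ for the second term to contribute, the leading contribution $2^{-(2n+2d)}$ giving $\tfrac14$ only when $n=1$, $d=0$, the factor $n+d-1$ killing the contribution to $\gamma_1$, the contributions $\tfrac{1}{16}$ to both $\gamma_2$ and $\gamma_3$ precisely when $n+d\le 2$, and the fact that multiplication by $(1-t)^{-2d}$ is an exact monomial shift in $u=1-t$, so the first term contributes $\tfrac12\gamma_m^{\Sp^1,\on}(\bs{a}^c)$ to $\gamma_m^{\OO_2,\on}$ with no mixing of indices. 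Your derivation of $\gamma_1^{\Sp^1,\on}(\bs{a}^c)=0$ from $\gamma_1^{\Sp^1}(\bs{a}^c;0)=\tfrac12\gamma_0^{\Sp^1}(\bs{a}^c;0)$ and $1-t^2=2(1-t)-(1-t)^2$ is correct (modulo the exponent, which should be $3-4n$ rather than $3-2n$, since the relevant weight vector $(-\bs{\alpha},\bs{\alpha})$ has length $2n$), and is a cleaner justification than the paper's bare citation. Your phrase ``the coefficients at $u^0$ and $u^1$'' is also imprecise --- the relevant powers of $u$ in the second term depend on $(n,d)$ --- but the values you claim are right.

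The genuine gap is the identity $\gamma_2^{\Sp^1,\on}(\bs{a}^c)=\gamma_3^{\Sp^1,\on}(\bs{a}^c)$, which you assert ``follows from a second consequence of Theorem~\ref{thrm:S1LaurentCotan} together with the Koszul relation.'' It does not. Writing $\Hilb^{\Sp^1,\off}=\sum_m\delta_m(1-t)^{m-(4n-1)}$ and multiplying by $2(1-t)-(1-t)^2$ gives $\gamma_2^{\on}=2\delta_2-\delta_1$ and $\gamma_3^{\on}=2\delta_3-\delta_2$, so the desired equality is the nontrivial relation $2\delta_3=3\delta_2-\delta_1$ among the \emph{third and fourth} off-shell Laurent coefficients; Theorem~\ref{thrm:S1LaurentCotan} determines only $\delta_0$ and $\delta_1$ and cannot produce it. The paper closes this gap by importing the values of $\gamma_m^{\Sp^1,\on}(\bs{a}^c)$ for $m\le 3$ from the external reference cited in the sentence immediately following Corollary~\ref{cor:O2LaurentSymplectic}; you must either do the same or prove $\gamma_2^{\on}=\gamma_3^{\on}$ independently, e.g.\ from the functional equation satisfied by the (graded Gorenstein) algebra of on-shell invariants. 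Relatedly, your bookkeeping yields $\gamma_2^{\OO_2,\on}=\tfrac12\gamma_2^{\Sp^1,\on}(\bs{a}^c)+s_2$ and $\gamma_3^{\OO_2,\on}=\tfrac12\gamma_3^{\Sp^1,\on}(\bs{a}^c)+s_3$, not the $\tfrac12\gamma_1^{\Sp^1,\on}(\bs{a}^c)+s_m$ appearing in the displayed statement, so matching the statement as written also requires reconciling your indexing with that of the external reference --- a discrepancy your proposal does not acknowledge.
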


For $m \leq 3$, the coefficients $\gamma_m^{\Sp^1,\on}((-\bs{\alpha},\bs{\alpha}))$ for $\Sp^1$-symplectic quotients were computed
in \cite[Theorem~5.1]{HerbigSchwarzSeaton}.


\section{Other semidirect products of $\Sp^1$ by finite groups}
\label{sec:O2OtherSemidirect}

The approach of Section~\ref{sec:O2} can be applied to other extensions of the circle by a finite group.
As an example, consider the extension $\Sp^1\rtimes\Z/4\Z$ where the elements of $\Z/4\Z$ of order $4$
act on $\Sp^1$ via $t\mapsto t^{-1}$ and the other elements act trivially. That is, if $\gamma$ is a
generator of $\Z/4\Z$, then the multiplication of $\Sp^1\rtimes\Z/4\Z$ is given by
\[
    (z_1, \gamma^j)(z_2, \gamma^k)
    =
    (z_1 z_2^{(-1)^j}, \gamma^{j+k})
\]
for $z_1, z_2\in\Sp^1$. The representation
of $\Sp^1\rtimes\Z/4\Z$ induced by the representation $\epsilon_a$ of the normal subgroup $\Sp^1$ is denoted
$\nu_a\co\Sp^1\rtimes\Z/4\Z\to\U_4$ and given by
\begin{align*}
        (z, 1)
        &\mapsto
        \begin{pmatrix} z^a & 0 & 0 & 0 \\ 0 & z^{-a} & 0 & 0 \\ 0 & 0 & z^a & 0 \\ 0 & 0 & 0 & z^{-a} \end{pmatrix},
    &
        (z, \gamma)
        &\mapsto
        \begin{pmatrix} 0 & z^a & 0 & 0  \\ 0 & 0 & z^{-a} & 0 \\ 0 & 0 & 0 & z^a \\ z^{-a} & 0 & 0 & 0 \end{pmatrix},
    \\
        (z, \gamma^2)
        &\mapsto
        \begin{pmatrix} 0 & 0 & z^a & 0  \\ 0 & 0 & 0 & z^{-a} \\ z^a & 0 & 0 & 0 \\ 0 & z^{-a} & 0 & 0 \end{pmatrix},
    &
        (z, \gamma^3)
        &\mapsto
        \begin{pmatrix} 0 & 0 & 0 & z^a  \\ z^{-a} & 0 & 0 & 0 \\ 0 & z^a & 0 & 0 \\ 0 & 0 & z^{-a} & 0 \end{pmatrix},
\end{align*}
where $z\in\Sp^1$.

For simplicity, we consider a representation of the form $V = \bigoplus\limits_{i=1}^n \nu_{a_i}$ where each $a_i > 0$.
The maximally graded Hilbert series of the invariants can be computed using the same methods as in Theorem~\ref{thrm:O2Max}.
Specifically, for the connected components corresponding to elements of the form $(z, \gamma)$ and $(z, \gamma^3)$, each
integral in the Molien-Weyl Theorem is simply
\[
    \frac{1}{8\pi\sqrt{-1}} \int\limits_{\Sp^1} \frac{dz}{z \prod\limits_{i=1}^n (1 - t_i^4)}
    =
    \frac{1}{4\prod\limits_{i=1}^n (1 - t_i^4)}.
\]
For the connected component associated to elements of the form $(z, 1)$, the integral is
\[
    \frac{1}{8\pi\sqrt{-1}} \int\limits_{\Sp^1} \frac{dz}{z \prod\limits_{i=1}^n (1 - t_i z^{a_i})^2(1 - t_i z^{-a_i})^2}.
\]
Choosing an $i$ and an $a_i$th root of unity $\zeta_0$ and rewriting the integrand as
\[
    \frac{z^{2a_i-1}}{(1 - t_i z^{a_i})^2
        (z - \zeta_0 t_i^{1/a_i})^2
        \prod\limits_{\substack{\zeta^{a_i} = 1\\ \zeta\neq\zeta_0}}(z - \zeta t_i^{1/a_i})^2
        \prod\limits_{\substack{j=1\\j\neq i}}^n (1 - t_j z^{a_j})^2(1 - t_j z^{-a_j})^2},
\]
the residue at $z = \zeta_0 t_i^{1/a_i}$ is given by
\[
    \frac{\partial}{\partial z} \left.\left( \frac{z^{2a_i-1}}{(1 - t_i z^{a_i})^2
        \prod\limits_{\substack{\zeta^{a_i} = 1\\ \zeta\neq\zeta_0}}(z - \zeta t_i^{1/a_i})^2
        \prod\limits_{\substack{j=1\\j\neq i}}^n (1 - t_j z^{a_j})^2(1 - t_j z^{-a_j})^2}\right)\right\rvert_{z = \zeta_0 t_i^{1/a_i}}.
\]

For the connected component associated to elements of the form $(z, \gamma^2)$, the integral is
\[
    \frac{1}{8\pi\sqrt{-1}} \int\limits_{\Sp^1} \frac{dz}
            {z \prod\limits_{i=1}^n (1 - t_i z^{-a_i})(1 + t_i z^{-a_i})(1 - t_i z^{a_i})(1 + t_i z^{a_i})}.
\]
Rewriting the integrand as
\[
    \frac{z^{2a_i-1}}{
        (z^{a_i} - t_i) (z^{a_i} + t_i)
        (1 - t_i z^{a_i})(1 + t_i z^{a_i})
        \prod\limits_{\substack{j=1\\ j\neq i}}^n
            (1 - t_j z^{-a_j})
            (1 + t_j z^{-a_j})
            (1 - t_j z^{a_j})
            (1 + t_j z^{a_j})}
\]
and choosing an $a_i$th root of unity $\zeta_0$,
the residue at a pole of the form $z = \zeta_0 t_i^{1/a_i}$ is given by
\[
    \frac{1 }{
        2a_i (1 - t_i^4)
        \prod\limits_{\substack{j=1\\ j\neq i}}^n
            (1 - \zeta_0^{-a_j} t_i^{-a_j/a_i} t_j )
            (1 + \zeta_0^{-a_j} t_i^{-a_j/a_i} t_j )
            (1 - \zeta_0^{a_j} t_i^{a_j/a_i} t_j )
            (1 + \zeta_0^{a_j} t_i^{a_j/a_i} t_j )},
\]
and the residue at a pole of the form $z = \zeta_0 (-t_i)^{1/a_i}$ is given by
\[
    \frac{1}{
        2 a_i (1 - t_i^4)
        \prod\limits_{\substack{j=1\\ j\neq i}}^n
            (1 - \zeta_0^{-a_j} (-t_i)^{-a_j/a_i} t_j )
            (1 + \zeta_0^{-a_j} (-t_i)^{-a_j/a_i} t_j )
            (1 - \zeta_0^{a_j} (-t_i)^{a_j/a_i} t_j )
            (1 + \zeta_0^{a_j} (-t_i)^{a_j/a_i} t_j )}.
\]
Hence the maximally graded Hilbert series of the invariants of the representation $V = \bigoplus\limits_{i=1}^n \nu_{a_i}$
is given by
\begin{align*}
    &\frac{1}{2\prod\limits_{i=1}^n (1 - t_i^4)}
    +
    \frac{1}{4} \sum\limits_{i=1}^n  \frac{1}{a_i^4 (1 - t_i^2)^4} \sum\limits_{\zeta^{a_i} = 1}
        \frac{ (2a_i - 1)  D_i
                - (\zeta t_i^{1/a_i})^{3-2a_i} D_i^\prime}
            {
                \prod\limits_{\substack{j=1\\j\neq i}}^n
                    (1 - \zeta^{a_j}  t_i^{a_j/a_i} t_j )^4
                    (1 - \zeta^{-a_j} t_i^{-a_j/a_i} t_j)^4}
    \\&\quad +
    \sum\limits_{i=1}^n \frac{1}{8a_i (1 - t_i^4)}
    \sum\limits_{\zeta^{a_i} = 1} \Bigg[
        \frac{1}{\prod\limits_{\substack{j=1\\ j\neq i}}^n
            (1 - \zeta^{-a_j} t_i^{-a_j/a_i} t_j )
            (1 + \zeta^{-a_j} t_i^{-a_j/a_i} t_j )
            (1 - \zeta^{a_j} t_i^{a_j/a_i} t_j )
            (1 + \zeta^{a_j} t_i^{a_j/a_i} t_j )}
    \\&+ \quad
        \frac{1}{\prod\limits_{\substack{j=1\\ j\neq i}}^n
            (1 - \zeta^{-a_j} (-t_i)^{-a_j/a_i} t_j )
            (1 + \zeta^{-a_j} (-t_i)^{-a_j/a_i} t_j )
            (1 - \zeta^{a_j} (-t_i)^{a_j/a_i} t_j )
            (1 + \zeta^{a_j} (-t_i)^{a_j/a_i} t_j )}
    \Bigg],
\end{align*}
where
\[
    D_i = a_i^2 (1 - t_i^2)^2
        \prod\limits_{\substack{j=1\\j\neq i}}^n
            (1 - \zeta^{a_j} t_i^{a_j/a_i} t_j )^2
            (1 - \zeta^{-a_j} t_i^{-a_j/a_i} t_j )^2,
\]
and
\[
    D_i^\prime = \frac{d}{dz}\left.\left(
    (1 - t_i z^{a_i})^2
        \prod\limits_{\substack{\eta^{a_i} = 1\\ \eta\neq\zeta}}(z - \eta t_i^{1/a_i})^2
        \prod\limits_{\substack{j=1\\j\neq i}}^n (1 - t_j z^{a_j})^2(1 - t_j z^{-a_j})^2
    \right)\right\rvert_{z = \zeta t_i^{1/a_i}}.
\]
One could also compute the first few Laurent coefficients of the corresponding univariate Hilbert series using the
techniques of Section~\ref{subsec:S1Laurent} and \ref{subsec:O2Laurent}, though it is clear that the level of
complexity increases considerably.


\section{Algorithms to compute the Hilbert series}
\label{sec:Algorithms}

The formulas given in Theorems~\ref{thrm:S1Max} and \ref{thrm:O2Max} and Corollaries~\ref{cor:S1Univar},
\ref{cor:S1CotanBivar}, \ref{cor:S1CotanUnivar}, \ref{cor:S1SymplecticBivar},
\ref{cor:O2Univar}, \ref{cor:O2Bivar}, and \ref{cor:O2SymplecticBivar} each indicate an algorithm for the
computation of the corresponding Hilbert series very similar to the algorithms described in
\cite[Section~4]{HerbigSeaton} and \cite[Section~4]{CowieHerbigSeatonHerden}; see also
\cite[Section~6]{CayresPintoHerbigHerdenSeaton} and \cite[Section~3.3]{HerbigHerdenSeatonT2}.
We give a brief description of this algorithm for the case of
Theorem~\ref{thrm:S1Max}, as the others are the same with minor modifications, and refer the reader to the above
references for more details.

Let $a\in \Z$, $a>0$. For a formal power series $F(\bs{t})$ in the variables $\bs{t} = (t_1,\ldots,t_n)$, let $U_{a,i}$ denote the operator
\[
    U_{a,i} F(\bs{t}) = \frac{1}{a} \sum\limits_{\zeta^a = 1} F(t_1,\ldots,\zeta t_i^{1/a},\ldots, t_n).
\]
Let $\bs{t}_i = (t_1,\ldots,t_{i-1},t_{i+1},\ldots,t_n)$, and note that if
$F(\bs{t}) = \sum_{d=0}^\infty c_d(\bs{t}_i) t_i^d$, then
\begin{equation}
\label{eq:AlgU}
    U_{a,i} F(\bs{t})   =   \sum\limits_{d = 0}^\infty c_{ad}(\bs{t}_i) t_i^d.
\end{equation}

Using this operator, the first sum in Equation~\eqref{eq:S1Max} can be rewritten as $\sum_{i=1}^k U_{\alpha_i,i} \Phi_i(\bs{t})$ where
\[
    \Phi_i(\bs{t}) = \frac{t_i^b}{
        \prod\limits_{\substack{j=1 \\ j\neq i}}^n (1 - t_j t_i^{a_j})}.
\]
The function $\Phi_i(\bs{t})$ can be written as $P(\bs{t})/Q(\bs{t})$ where $P(\bs{t})$ is a Laurent monomial in $\bs{t}$ and $Q(\bs{t})$ is a product of
factors of the form $(1 - t_j^p t_i^q)$ with $q > 0$. Then $U_{\alpha_i,i}(P(\bs{t})/Q(\bs{t}))$ is a rational function with denominator
given by the replacement rule
\[
    (1 - t_j^p t_i^{q} )   \mapsto (1 - t_j^{\alpha_ip/\gcd(\alpha_i,q)} t_i^{q/\gcd(\alpha_i,q)} )^{\gcd(\alpha_i,q)}
\]
applied to $Q(\bs{t})$.
The degrees of the numerator and denominator of $U_{\alpha_i,i}(P(\bs{t})/Q(\bs{t}))$ can be computed using
\cite[Equation~(16)]{HerbigHerdenSeatonT2}, so that knowing the denominator and the degree of the numerator,
the numerator of $U_{\alpha_i,i}(P(\bs{t})/Q(\bs{t}))$ can be computed using Equation~\eqref{eq:AlgU}. This yields a computation of the
first sum in Equation~\eqref{eq:S1Max}, and the second sum can be computed directly by searching through the finite
set of possible elements of $\mathcal{S}_{\bs{a},b}$ as defined in Equation~\eqref{eq:S1MaxS}, e.g., by using the
\texttt{FrobeniusSolve} function on \emph{Mathematica} \cite{Mathematica}. However, as noted in
Remark~\ref{rem:S1MaxSEmpty}, one can always reduce to a case where $\mathcal{S}_{\bs{a},b}$ is empty so that the
second sum vanishes.

These algorithms have been implemented on \emph{Mathematica} and are available from the authors upon request.


\bibliographystyle{amsplain}
\bibliography{BHHKSW}

\end{document}